\documentclass[12pt,english]{amsart}
\usepackage[T1]{fontenc}
\usepackage[latin9]{inputenc}
\usepackage{geometry}
\geometry{verbose,tmargin=3cm,bmargin=3cm,lmargin=3cm,rmargin=3cm}
\usepackage{amsthm}
\usepackage{amssymb}
\usepackage{babel}

\makeatletter
\numberwithin{equation}{section}
\numberwithin{figure}{section}
\theoremstyle{plain}
\newtheorem{thm}{\protect\theoremname}[section]
  \theoremstyle{plain}
  \newtheorem{conjecture}[thm]{\protect\conjecturename}
  \theoremstyle{plain}
  \newtheorem{lem}[thm]{\protect\lemmaname}
  \theoremstyle{definition}
  \newtheorem{defn}[thm]{\protect\definitionname}
  \theoremstyle{plain}
  \newtheorem{prop}[thm]{\protect\propositionname}
  \theoremstyle{remark}
  \newtheorem{rem}[thm]{\protect\remarkname}
  \theoremstyle{plain}
  \newtheorem{cor}[thm]{\protect\corollaryname}


\newcommand{\dlim}{\mathop{\underrightarrow{\rm lim}}\nolimits}
\newcommand{\Rad}{\mathop{\rm Rad}\nolimits}
\newcommand{\en}{\mathop{\rm End}\nolimits}

\newcommand{\rk}{\mathop{\rm rk}\nolimits}

\makeatother

  \providecommand{\conjecturename}{Conjecture}
  \providecommand{\corollaryname}{Corollary}
  \providecommand{\definitionname}{Definition}
  \providecommand{\lemmaname}{Lemma}
  \providecommand{\propositionname}{Proposition}
  \providecommand{\remarkname}{Remark}
\providecommand{\theoremname}{Theorem}

\begin{document}

\title{Inner Ideals of Simple Locally Finite Lie Algebras}

\author{A.A. Baranov}

\address{Department of Mathematics\\
University of Leicester\\
 Leicester, LE1 7RH, UK}

\email{ab155@le.ac.uk}

\author{J. Rowley}

\address{Department of Mathematics\\
University of Leicester\\
 Leicester, LE1 7RH, UK}

\email{jrdr1@le.ac.uk}
\begin{abstract}
Inner ideals of simple locally finite dimensional Lie algebras over
an algebraically closed field of characteristic 0 are described. In
particular, it is shown that a simple locally finite dimensional Lie
algebra has a non-zero proper inner ideal if and only if it is of
diagonal type. Regular inner ideals of diagonal type Lie algebras
are characterized in terms of left and right ideals of the enveloping
algebra. Regular inner ideals of finitary simple Lie algebras are
described.
\end{abstract}
\maketitle

\section{Introduction}

An inner ideal of a Lie algebra $L$ is a subspace $I$ of $L$ such
that $[I[I,L]]\subseteq I$. Inner ideals were first systematically
studied by Benkart \cite{bib:Benk1,bib:Benk2} and proved to be useful
in classifying simple Lie algebras, both of finite and infinite dimension.
They play a role similar to one-sided ideals of associative algebras
in developing Artinian structure theory for Lie algebras \cite{bib:FGG}.
They are also useful in constructing gradings of Lie algebras \cite{FGGN}. 

Throughout the paper, the ground field $F$ is assumed to be algebraically closed
of characteristic zero. 
In the paper we study inner ideals of simple locally finite Lie algebras
over $F$. Recall
that an algebra is called \emph{locally finite} if every finitely
generated subalgebra is finite dimensional. All locally finite algebras
will be considered to be infinite dimensional. Although the full classification
of simple locally finite Lie algebras seems to be impossible to obtain,
there are two classes of these algebras which have especially nice
properties and can be characterized in many different ways. Those
are finitary simple Lie algebras and diagonal simple locally finite
Lie algebras. Recall that an infinite dimensional Lie algebra is called
\emph{finitary} if it consists of finite-rank linear transformations
of a vector space. It is easy to see that finitary Lie algebras are
locally finite. Diagonal locally finite Lie algebras were introduced
in \cite{bib:Bar3} and are defined as limits of ``diagonal'' embeddings
of finite dimensional Lie algebras (see Definition \ref{diagonal}
for details). They can be also characterized as Lie subalgebras of
locally finite associative algebras \cite[Corollary 3.9]{bib:Bar5}.
In Section \ref{sec:non-diag} we prove the following theorem, which
is one of our main results.
\begin{thm}
\label{Mainresult} Let $F$ be an algebraically closed field of 
characteristic zero. A simple locally finite Lie algebra over $F$
has a proper non-zero inner ideal if and only if it is diagonal.
\end{thm}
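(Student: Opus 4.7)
The plan is to establish the two implications separately. The forward (``if'') direction is essentially a construction; the reverse is where the substance lies.

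For the \emph{if} direction, use \cite[Corollary 3.9]{bib:Bar5}, which characterizes diagonal simple locally finite Lie algebras as exactly the Lie subalgebras of locally finite associative algebras. Fix such an embedding $L \subseteq A$ and choose a non-trivial idempotent $e \in A$ (such $e$ exist in any sufficiently rich finite stage of $A$). The Peirce component $eA(1-e)$ is square-zero in $A$, so under the commutator bracket $[eA(1-e),[eA(1-e),A]] \subseteq eA(1-e)$; therefore $I:= L \cap eA(1-e)$ is an inner ideal of $L$. Choosing $e$ carefully ensures that $I$ is both nonzero and proper.

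For the \emph{only if} direction, suppose $L$ is simple locally finite with a non-zero proper inner ideal $I$, and write $L = \bigcup_n L_n$ as an ascending chain of finite-dimensional simple subalgebras. The first key step is to produce a nonzero \emph{extremal} element $x\in L$, i.e., one satisfying $[x,[x,L]] \subseteq Fx$. For each $n$ the intersection $I \cap L_n$ (once nonzero) is an inner ideal of $L_n$, and by Benkart's classification of inner ideals in finite-dimensional simple Lie algebras it contains an extremal element of $L_n$, necessarily a long-root vector. A Jordan-pair argument using the pair $(I,[I,L])$ then yields a single $0 \ne x \in I$ that is extremal in every $L_m$ containing it, hence in $L$ itself.

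The remaining step --- and the main obstacle --- is to deduce from the existence of such $x$ that each embedding $L_m \hookrightarrow L_{m+1}$ is diagonal in the sense of Definition \ref{diagonal}. Since $x$ is a long-root vector of every $L_m$ containing it, $\ad(x)^2$ has rank one on each $L_m$ and therefore on all of $L$. A case-by-case analysis of embeddings of finite-dimensional simple Lie algebras then shows that preservation of this rank-one condition is incompatible with (i) ``inflating'' embeddings between classical types --- such as symmetric or exterior powers of the natural module, which raise the rank of $\ad(x)^2$ --- and (ii) essentially any embedding involving exceptional types, since only the five standard exceptional dimensions are available so exceptionals cannot recur cofinally in $\{L_n\}$. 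The delicate point is to combine these local constraints into a global statement: extracting a \emph{single} element extremal in every $L_m$ simultaneously, and then leveraging this uniform extremality to eliminate each non-diagonal possibility among the transition maps.
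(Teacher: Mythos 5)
The fatal problem is in your ``only if'' direction, at the step you yourself flag as delicate: extracting a single $0\ne x\in I$ that is extremal in every $L_m$ containing it. This is not merely unproved --- it is impossible in general. An element $x$ with $[x,[x,L]]\subseteq Fx$ spans a minimal inner ideal of $L$, and a simple locally finite Lie algebra possessing a proper minimal inner ideal is necessarily finitary (see \cite[Theorems 5.1 and 5.3]{bib:DLGL}, or Corollary \ref{min-reg} for the regular case). Since there exist diagonal, non-finitary simple locally finite Lie algebras, and every diagonal one has a proper non-zero inner ideal (Proposition \ref{Prop1}), your chain ``inner ideal $\Rightarrow$ global extremal element'' would yield ``inner ideal $\Rightarrow$ finitary'', which is false. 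The underlying obstruction is that an extremal (long-root) element of $L_n$ generically fails to remain extremal in $L_{n+1}$; no Jordan-pair argument can repair this. The paper's actual route uses only the far weaker consequence of Benkart's classification that every element $x$ of a proper inner ideal of a classical simple algebra satisfies $x^{3}V=0$ on the natural module (Lemma \ref{Inner1}); it then shows that in a non-diagonal algebra the $\delta$-invariant of natural modules restricted to a fixed $\mathfrak{sl}_2$ grows without bound (Proposition \ref{simple2}), forcing $x^{3}V\ne0$ at some level and hence, via Proposition \ref{InnerPerfect}, $I_\alpha=L_\alpha$ --- a contradiction. A secondary but real defect is your hypothesis that $L=\bigcup_n L_n$ with each $L_n$ finite-dimensional \emph{simple}: simple locally finite Lie algebras are only guaranteed perfect (conical) local systems, and dealing with the solvable radicals and the several simple components of each Levi quotient is a substantive part of the paper's argument, not a normalization you may assume away.

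Your ``if'' direction is closer to workable but still has a gap at ``choosing $e$ carefully ensures that $I$ is both nonzero and proper.'' For an arbitrary embedding of $L$ into a locally finite associative algebra $A$ there is no reason why $L$ should meet the Peirce component $eA(1-e)$ nontrivially. The paper avoids this by working with the canonical involution simple $\mathfrak{P}^{*}$-envelope, where $L=\mathfrak{su}^{*}(A)$ is large in $A$, and by choosing $e$ inside an involution simple finite-dimensional subalgebra $A_1$ with $e^{*}e=0$ and $eL_1e^{*}\ne0$, which is possible by the finite-dimensional classification (Theorem \ref{ClassInnerFD}); then $I=eAe^{*}\cap\mathfrak{su}^{*}(A)$ is visibly non-zero, square-zero, and inner.
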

The theorem shows that for locally finite simple Lie algebras, non-trivial inner ideals appear only in diagonal
Lie algebras and gives another characterization of this class of algebras.
The complete classification of diagonal simple locally finite Lie
algebras was obtained in \cite{bib:BBZ} and we need some notation
to state it here. 

Let $A$ be an associative enveloping algebra of a Lie algebra $L$
(i.e. $L$ is a Lie subalgebra of $A$ and $A$ is generated by $L$
as an associative algebra). We say that $A$ is a \emph{$\mathfrak{P}$-enveloping
algebra} of $L$ if $[A,A]=L$. Assume now that $A$ has an involution
(which will be always denoted by $\ast$). Then the set $\mathfrak{u}^{\ast}(A)=\{a\in A\mid a^{\ast}=-a\}$
of skew symmetric elements of $A$ is a Lie subalgebra of $A$. Let
$\mathfrak{su}^{\ast}(A)=[\mathfrak{u}^{\ast}(A),\mathfrak{u}^{\ast}(A)]$
denote the commutator subalgebra of $\mathfrak{u}^{\ast}(A)$. We
say that $A$ is a \emph{$\mathfrak{P}^{*}$-enveloping algebra} of
$L$ if $\mathfrak{su}^{\ast}(A)=L$. It is shown in \cite[1.3-1.6]{bib:BBZ}
that every simple diagonal locally finite Lie algebra $L$ has a unique
involution simple \emph{$\mathfrak{P}^{*}$-enveloping algebra} $A(L)$
(which is necessarily locally finite). Moreover, the mapping $L\mapsto A(L)$
is a bijective correspondence between the set of all (up to isomorphism)
infinite dimensional simple diagonal locally finite Lie algebras and
the set of all (up to isomorphism) infinite dimensional involution
simple locally finite associative algebras (the inverse map is $A\mapsto\mathfrak{su}^{\ast}(A)$).
Similarly, every simple plain (see Definition \ref{diagonal}) locally
finite Lie algebra $L$ has a unique (up to isomorphism and antiisomorphism)
simple\emph{ $\mathfrak{P}$-enveloping algebra} $A(L)$ (which is
necessarily locally finite). Moreover, the mapping $L\mapsto A(L)$
is a bijective correspondence between the set of all (up to isomorphism)
infinite dimensional simple plain locally finite Lie algebras and
the set of all (up to isomorphism and antiisomorphism) infinite dimensional
simple locally finite associative algebras (the inverse map is $A\mapsto[A,A]$). 

In Section \ref{sec:diag} we introduce and describe basic properties
of so-called regular inner ideals of simple diagonal locally finite
Lie algebras, see Definition \ref{regular} and Propositions \ref{RLinner}
and \ref{RLinner*}. They are induced by left and right ideals of
the $\mathfrak{P}$- (and $\mathfrak{P}^{*}$)-enveloping algebras.
We believe that the following conjecture is true.
\begin{conjecture}
Let $L$ be a simple diagonal locally finite Lie algebra over $F$.
Assume that $L$ is not finitary orthogonal. Then every inner ideal
of $L$ is regular.
\end{conjecture}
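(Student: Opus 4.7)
The plan is to reduce the problem to the finite-dimensional case via the direct-limit structure of a diagonal Lie algebra, apply the known classification of inner ideals of finite-dimensional simple Lie algebras, and then glue the resulting one-sided data into global left and right ideals of the envelope $A(L)$. Write $L=\bigcup_{n\ge 1}L_{n}$ with $L_{n}$ finite-dimensional semisimple and the embeddings $L_{n}\hookrightarrow L_{n+1}$ diagonal, and correspondingly $A=\bigcup A_{n}$ where $A_{n}$ is the $\ast$-invariant (in the $\mathfrak{P}^{*}$-case) enveloping algebra of $L_{n}$ inside $A=A(L)$. For an inner ideal $I\subseteq L$, set $I_{n}=I\cap L_{n}$; then $I_{n}$ is an inner ideal of $L_{n}$ and $I=\bigcup_{n}I_{n}$. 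Since $L$ is not finitary orthogonal, one can arrange this approximation so that each simple summand of each $L_{n}$ is of non-orthogonal classical type (types $A$ or $C$, and involution-type analogues in the $\mathfrak{P}^{*}$-setting).

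Next, invoke the classification of inner ideals in finite-dimensional simple non-orthogonal Lie algebras, due to Benkart \cite{bib:Benk1,bib:Benk2} and refined by Fern\'andez-L\'opez, Garc\'ia and G\'omez Lozano \cite{bib:FGG}. This classification asserts that every inner ideal arises from a pair $(R,\ell)$ of right and left ideals of the associative (or involution) envelope, via the very formula used to define regular inner ideals in Definition~\ref{regular} (see Propositions~\ref{RLinner} and~\ref{RLinner*}). Thus at each finite stage $I_{n}$ is regular: there exist a right ideal $R_{n}\subseteq A_{n}$ and a left ideal $\ell_{n}\subseteq A_{n}$ producing $I_{n}$ by the regular formula.

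The decisive step is coherence: one must arrange $R_{n}\subseteq R_{n+1}$ and $\ell_{n}\subseteq \ell_{n+1}$, so that the unions $R=\bigcup R_{n}$ and $\ell=\bigcup \ell_{n}$ are one-sided ideals of $A$. This requires a rigidity result at the finite level: up to a controlled, finite ambiguity (for example, the swap $R\leftrightarrow\ell^{\ast}$ in the involution case), the one-sided data representing a given inner ideal is canonical. Translating $(R_{n},\ell_{n})$ into a pair of idempotents $e_{n},f_{n}\in A_{n}$ with $R_{n}=e_{n}A_{n}$ and $\ell_{n}=A_{n}f_{n}$, one analyses how diagonal embeddings $A_{n}\hookrightarrow A_{n+1}$ act on idempotents and uses a consistency argument (possibly together with the $\ast$-action) to choose coherent representatives. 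Once this is achieved, the regular inner ideal of $L$ defined by $(R,\ell)$ coincides with $I$, since equality holds at every finite stage.

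The main obstacle is exactly this coherence step: a single inner ideal of $L_{n}$ may admit several one-sided presentations, and the diagonal embedding pattern need not respect an arbitrary choice made at stage $n$. Overcoming this will likely require both a sharp uniqueness theorem for the one-sided data at the finite level and a detailed understanding of the combinatorics of diagonal embeddings of finite-dimensional simple Artinian algebras acting on their idempotent posets. The exclusion of the finitary orthogonal case is expected to enter precisely here: in the orthogonal situation, inner ideals attached to totally isotropic subspaces genuinely fail to arise from one-sided ideals of the natural envelope $A(L)$, which is why the conjecture must exclude them.
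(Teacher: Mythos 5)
This statement is labelled a \emph{Conjecture} in the paper, and the paper does not prove it: it only establishes partial results (Theorem \ref{general}) and the special case where $L$ is locally semisimple (Corollary \ref{LocSemi}). Your proposal is likewise not a proof --- you yourself flag the ``coherence step'' as an unresolved obstacle --- so at best it is a strategy sketch, and it contains two concrete errors beyond the acknowledged gap. First, you claim one can arrange the local system so that every simple summand of every $L_n$ is of non-orthogonal type. This is false: the hypothesis only excludes the \emph{finitary} orthogonal case, so $L$ may well be a non-finitary diagonal limit of orthogonal algebras, in which case every finite stage is orthogonal and Theorem \ref{ClassInnerFD}(3) supplies genuinely non-regular inner ideals (Type 1 point spaces and the $[v,H^{\perp}]$ type) at each level. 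The paper's Theorem \ref{general}(2) has to confront exactly this: it shows that if such non-regular pieces persist at all levels then the elements involved have bounded rank on all natural modules, whence $L$ is finitary by Theorem \ref{finitaryCriterion} --- a contradiction. Your reduction silently skips the entire content of that argument. Second, you assume the $L_n$ are semisimple, but diagonal simple locally finite Lie algebras need not be locally semisimple (the paper cites examples), and in the presence of radicals one can only control the images $\overline{I_\alpha}$ in $L_\alpha/\Rad L_\alpha$, not $I_\alpha$ itself; this is precisely why the general statement remains a conjecture.

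On the coherence step itself: you have misplaced the difficulty. Regularity in the paper's sense is the elementwise condition $IAI\subseteq I$ (resp.\ $\mathfrak{u}^{*}(IAI)\subseteq I$), which passes to direct limits automatically --- if each $I_\alpha$ is regular in $A_\alpha$ then any element of $IAI$ already lies in some $I_\alpha A_\alpha I_\alpha\subseteq I_\alpha\subseteq I$. This is exactly how Corollary \ref{LocSemi}(ii) concludes, and the associated one-sided ideals are then produced canonically as $\mathcal{L}=AI+I$ and $\mathcal{R}=IA+I$ (Proposition \ref{RLinner}), with no need to choose compatible idempotents. The directed system of idempotents appears only in the refinement \ref{LocSemi}(iii), where it is obtained by intersecting a single global left ideal with each $A_\alpha$, again avoiding any choice problem. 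The genuine open difficulty is the radical, not idempotent coherence.
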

In Section \ref{sec:diag} we prove some partial results towards the
conjecture (see Theorem \ref{general}) and show that the conjecture
holds in the case of locally semisimple diagonal Lie algebras (see
Corollary \ref{LocSemi}).

In the last section we apply our results to the finitary simple Lie
algebras. These algebras were classified in \cite{bib:Finitary0}.
In particular, there are just three finitary simple Lie algebras over
$F$ of infinite countable dimension: $\mathfrak{sl}_{\infty}(F)$,
$\mathfrak{so}_{\infty}(F)$ and $\mathfrak{sp}_{\infty}(F)$. Since
finitary simple Lie algebras are both diagonal and locally semisimple,
by Corollary \ref{LocSemi}, all their inner ideals are regular, except
in the finitary orthogonal case. The classification of inner ideals
of finitary simple Lie algebras was first obtained by Fern\'{a}ndez
L\'{o}pez, Garc\'{i}a and G\'{o}mez Lozano \cite{bib:Inner} (over
arbitrary fields of characteristic zero), with Benkart and Fern\'{a}ndez
L\'{o}pez \cite{bib:Benk3} settling later the missing case for orthogonal
algebras. We provide an alternative proof for the case of special
linear and symplectic algebras over an algebraically closed field
of characteristic zero (see Theorem \ref{finitReg}). In the case
of orthogonal algebras we describe only regular inner ideals. 

It follows from a general result, proved for nondegenerate Lie algebras
by Draper, Fern\'{a}ndez L\'{o}pez, Garc\'{i}a and G\'{o}mez Lozano,
that a simple locally finite Lie algebra contains proper minimal inner
ideals if and only if it is finitary (see \cite[Theorems 5.1 and 5.3]{bib:DLGL}).
We prove a version of this result for regular inner ideals, see Corollary
\ref{min-reg}. 

We are grateful to Antonio Fern\'{a}ndez L\'{o}pez for 
attracting our interest to inner ideals and useful comments and suggestions.

\section{Preliminaries}

\label{sec:prelim} Recall that a Lie algebra $L$ is called 
\emph{perfect} if $[L,L]=L$. Similarly, an associative algebra $A$ is 
\emph{perfect} if $AA=A$ (which is always true if $A$ contains 
an identity element). Let $L$ be a perfect finite-dimensional Lie algebra. Then
its solvable radical $\Rad L$ annihilates every simple $L$-module
and $L/\Rad L\cong Q_{1}\oplus\dots\oplus Q_{n}$ is the sum of simple
components $Q_{i}$. Denote by $V_{i}$ the first fundamental $Q_{i}$-module
(so $V_{i}$ is natural and $Q_{i}\cong\mathfrak{sl}(V_{i}),\ \mathfrak{so}(V_{i}),\ \mathfrak{sp}(V_{i})$
if $Q_{i}$ is of classical type). The modules $V_{i}$ can be considered
as $L$-modules in an obvious way and are called the \emph{natural}
$L$-modules. Assume that all $Q_{i}$ are of classical type. An $L$-module
$V$ is called \emph{diagonal} if each non-trivial composition factor
of $V$ is a natural or co-natural module (i.e. dual to natural) of
$L$. Otherwise $V$ is called \emph{non-diagonal}. A diagonal $L$-module
$V$ is called \emph{plain} if all $Q_{i}$ are of type $A$ and each
non-trivial composition factor of $V$ is a natural $L$-module. Let
$L'$ be another perfect finite dimensional Lie algebra containing
$L$. If $W$ is an $L'$-module we denote by $W\downarrow L$ the
module $W$ restricted to $L$. Let $V_{1}',\dots,V_{k}'$ be the
natural $L'$-modules. The embedding $L\subseteq L'$ is called \emph{diagonal}
(respectively \emph{plain}) if $(V_{1}'\oplus\dots\oplus V_{k}')\downarrow L$
is a diagonal (respectively plain) $L$-module. By the \emph{rank}
of a perfect finite dimensional Lie algebra we mean the smallest rank
of the simple components of $L/\Rad L$.

We will frequently use the following lemma from \cite{bib:BBZ}.
\begin{lem}
\label{BBZ2.5}\cite[Lemma 2.5]{bib:BBZ} Let $L_{1}\subseteq L_{2}\subseteq L_{3}$
be three perfect finite dimensional Lie algebras. Suppose that the
ranks of $L_{1}$ and $L_{3}$ are greater than 10 and the embedding
$L_{1}\subseteq L_{3}$ is diagonal. Then the embedding $L_{1}\subseteq L_{2}$
is diagonal. Moreover, if the restriction of each natural $L_{2}$-module
to $L_{1}$ is non-trivial then both embeddings $L_{1}\subseteq L_{2}$
and $L_{2}\subseteq L_{3}$ are diagonal.
\end{lem}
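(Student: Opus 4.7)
The plan is a Jordan--Hölder tracking argument applied to the sum $V := V'_1 \oplus \cdots \oplus V'_k$ of natural $L_3$-modules, combined with a rigidity statement for simple modules of classical Lie algebras of rank exceeding $10$. The diagonality hypothesis gives that $V \downarrow L_1$ is a diagonal $L_1$-module, and the goal is to propagate this control upward to the level of $L_2$.

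First I would transfer the information: view $V$ as an $L_2$-module, fix an $L_2$-composition series with simple factors $W_1, \dots, W_s$, and note that since $L_1 \subseteq L_2$ the composition factors of each $W_t \downarrow L_1$ form a sub-multiset of those of $V \downarrow L_1$. Hence every $W_t \downarrow L_1$ is itself a diagonal $L_1$-module. The second and main step is a rigidity claim: any simple $L_2$-module $W$ whose restriction to $L_1$ is a non-trivial diagonal $L_1$-module must itself be a natural or co-natural $L_2$-module. To prove this, I would write $W$ as an outer tensor product over the simple components of the Levi subalgebra of $L_2$; the diagonal hypothesis yields an upper bound on $\dim W$ that is linear in the number of non-trivial composition factors of $W \downarrow L_1$ (each of dimension equal to that of the natural $L_1$-module), whereas Weyl-style dimension lower bounds, together with an explicit highest-weight case analysis in types $A, B, C, D$, show that a simple module of a classical Lie algebra of rank greater than $10$ whose highest weight is neither the first nor the last fundamental weight has dimension far larger than the natural module. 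This forces $W$ to be natural or co-natural.

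To extract the first conclusion, note that $L_3/\Rad L_3$ acts faithfully on $V$ (because $\Rad L_3$ annihilates every simple $L_3$-module). Hence every simple component $Q'_j$ of $L_2/\Rad L_2$, being semisimple and nonzero, must act non-trivially on $V$, so its natural module $U_j$ or its dual occurs as one of the non-trivial $W_t$; by the rigidity claim $U_j \downarrow L_1$ is then a diagonal $L_1$-module, which proves that $L_1 \subseteq L_2$ is diagonal. For the ``moreover'' part, the extra hypothesis that every natural $L_2$-module restricts non-trivially to $L_1$ is used precisely to rule out the residual possibility of a non-natural simple composition factor $W_t$ of $V \downarrow L_2$ that restricts trivially to $L_1$ and so escapes the rigidity statement; with that case excluded, $V \downarrow L_2$ is itself a diagonal $L_2$-module, yielding diagonality of $L_2 \subseteq L_3$ as well. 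The main obstacle is the rigidity claim, which is a genuine representation-theoretic input requiring quantitative control of the smallest non-natural simple modules of classical Lie algebras of large rank, and this is exactly where the hypothesis that the ranks of $L_1$ and $L_3$ exceed $10$ is indispensable.
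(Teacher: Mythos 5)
The paper does not actually prove this lemma --- it is imported verbatim from \cite[Lemma 2.5]{bib:BBZ} --- but the method of that source reappears in the paper's own Lemma \ref{Lemma B}, and measured against either, your argument has a genuine gap: the ``rigidity claim'' at its core is false. Take $L_{2}=Q_{1}\oplus Q_{2}$ with $Q_{1}=\mathfrak{sl}_{12}$, $Q_{2}=\mathfrak{sl}_{100}$, let $L_{1}=Q_{1}$, and let $W=U_{1}\otimes U_{2}$ be the outer tensor product of the two natural modules. Then $W\downarrow L_{1}\cong U_{1}^{\oplus100}$ is a non-trivial diagonal $L_{1}$-module, yet $W$ is neither natural nor co-natural for $L_{2}$. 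This situation occurs inside the lemma: embed $L_{2}$ into $L_{3}=\mathfrak{sl}_{1200}$ via $F^{1200}=F^{12}\otimes F^{100}$; all hypotheses of the first assertion hold, the unique composition factor of $V\downarrow L_{2}$ is $U_{1}\otimes U_{2}$, and neither $U_{1}$ nor $U_{2}$ occurs in $V\downarrow L_{2}$ --- so the step ``its natural module $U_{j}$ or its dual occurs as one of the non-trivial $W_{t}$'' also fails. The proposed dimension count cannot repair this, because diagonality of $W\downarrow L_{1}$ puts no upper bound whatever on $\dim W$: the natural $L_{1}$-module may occur with arbitrarily large multiplicity, so there is nothing to compare against Weyl-type lower bounds for non-natural $L_{2}$-modules. (A further, separately fixable, gap: a factor $W_{t}$ that is non-trivial over $Q_{j}'$ may restrict trivially to $L_{1}$, where rigidity says nothing; there one must note $L_{1}\subseteq\mathrm{Ann}(W_{t})\subseteq\mathrm{Ann}(U_{j})$, so $U_{j}\downarrow L_{1}$ is trivial, hence diagonal.)

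What the correct proof uses in place of a dimension count is the weight function $\delta$ of Section \ref{sec:non-diag}: for rank $>10$, a module $M$ is diagonal over $L_{1}$ if and only if $\delta(M\downarrow L_{1})\le1$, and $\delta$ is monotone under passage to composition factors, superadditive on tensor products, and minimized among non-trivial simple $Q_{j}'$-modules by the natural one (\cite[Lemmas 6.7 and 7.2]{bib:Bar3}). For the first assertion this gives $\delta(U_{j}\downarrow L_{1})\le\delta(W\downarrow L_{1})\le\delta(V\downarrow L_{1})\le1$ for any composition factor $W$ of $V\downarrow L_{2}$ that is non-trivial over $Q_{j}'$, valid whether or not $W\downarrow L_{1}$ is trivial. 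Your rigidity claim becomes true only after adding the hypothesis that every natural $L_{2}$-module restricts non-trivially to $L_{1}$, which kills the tensor-product counterexample via superadditivity ($\delta(W\downarrow L_{1})\ge\delta(U_{1}\downarrow L_{1})+\delta(U_{2}\downarrow L_{1})\ge2$); that is precisely the hypothesis of the ``moreover'' clause and the content of Lemma \ref{Lemma B}.
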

We will also use the following obvious property of perfect finite
dimensional Lie algebras. 
\begin{lem}
\label{MaxIdeals} Let $L$ be a perfect finite dimensional Lie algebra
and let $Q_{1},\dots,Q_{n}$ be the simple components of $L/\Rad L$.
Then $L$ has exactly $n$ maximal ideals $M_{1},\dots,M_{n}$ and
$L/M_{i}\cong Q_{i}$. \end{lem}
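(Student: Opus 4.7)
The plan is to reduce quickly to the semisimple case by showing that every maximal ideal of $L$ automatically contains the solvable radical $R=\Rad L$, which is where perfectness enters.

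First I would take an arbitrary maximal ideal $M$ of $L$ and analyse $L/M$. By maximality, $L/M$ has no proper non-zero ideals, so either $L/M$ is simple non-abelian or $L/M$ is one-dimensional abelian. The latter is incompatible with $L$ being perfect: if $\dim(L/M)=1$ then $[L,L]\subseteq M$, but $[L,L]=L$, forcing $M=L$, a contradiction. Hence $L/M$ is simple non-abelian, and in characteristic zero this means $L/M$ is semisimple with trivial solvable radical.

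Next, since the image of $R$ in $L/M$ is a solvable ideal of a semisimple algebra, it is zero, i.e.\ $R\subseteq M$. Consequently the correspondence $M\mapsto M/R$ gives a bijection between the maximal ideals of $L$ and the maximal ideals of $L/R\cong Q_{1}\oplus\dots\oplus Q_{n}$. It is then a standard fact about finite direct sums of simple (non-abelian) Lie algebras that the maximal ideals are exactly the summand-complements $\bigoplus_{j\neq i}Q_{j}$ for $i=1,\dots,n$, with quotient $Q_{i}$. Pulling back, we obtain exactly $n$ maximal ideals $M_{1},\dots,M_{n}$ of $L$ with $L/M_{i}\cong Q_{i}$.

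There is no real obstacle here; the only point requiring care is ruling out a one-dimensional abelian quotient $L/M$, which is precisely where the hypothesis that $L$ is perfect gets used. The rest is just the elementary classification of ideals in a semisimple Lie algebra together with the correspondence theorem applied to the projection $L\to L/R$.
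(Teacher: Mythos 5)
Your proof is correct and follows exactly the route the paper takes: the paper's entire proof is the remark that any maximal ideal of a perfect Lie algebra contains the solvable radical, and your argument simply fills in the details of that observation (ruling out a one-dimensional abelian quotient via perfectness) before passing to the standard description of maximal ideals of $Q_{1}\oplus\dots\oplus Q_{n}$.
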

\begin{proof}
It is suffices to note that any maximal ideal of a perfect Lie algebra
contains its solvable radical.\end{proof}
\begin{defn}
A system of finite dimensional subalgebras $\mathfrak{L}=(L_{\alpha})_{\alpha\in\Gamma}$
of a Lie (or associative) algebra $L$ is called a \emph{local system}
for $L$ if the following are satisfied: 

(1) $L=\bigcup_{\alpha\in\Gamma}L_{\alpha}$ 

(2) for $\alpha,\beta\in\Gamma$ there exists $\gamma\in\Gamma$ such
that $L_{\alpha},L_{\beta}\subseteq L_{\gamma}$. 
\end{defn}
Put $\alpha\leq\beta$ if $L_{\alpha}\subseteq L_{\beta}$. Then $\Gamma$
is a directed set and $L=\dlim L_{\alpha}$. We say that a local system
is \emph{perfect} (resp. \emph{semisimple}) if it consists of perfect
(resp. semisimple) subalgebras. 
\begin{defn}
\label{diagonal} A perfect local system $(L_{\alpha})_{\alpha\in\Gamma}$
is called \emph{diagonal} (resp. \emph{plain}) if for all $\alpha\le\beta$
the embedding $L_{\alpha}\subseteq L_{\beta}$ is diagonal (resp.
plain). A simple locally finite Lie algebra $L$ is called \emph{diagonal}
(resp. \emph{plain}) if it has a diagonal (resp. plain) local system.
Otherwise, $L$ is called \emph{non-diagonal}. 
\end{defn}
Note that plain locally finite Lie algebras are diagonal. 
\begin{lem}
\label{radical} \cite[Theorem 3.2 and Lemma 3]{bib:BS} Let $L$
be a simple locally finite Lie (or associative) algebra. Then $L$
has a perfect local system and if $(L_{\alpha})_{\alpha\in\Gamma}$
is a perfect local system for $L$ then for every $\alpha\in\Gamma$
there exists $\alpha'\in\Gamma$ such that for all $\beta\ge\alpha'$
one has $\Rad L_{\beta}\cap L_{\alpha}=0$. \end{lem}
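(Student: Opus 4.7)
\emph{Part 1 (perfect local system).} I would start from any local system $(M_\alpha)_{\alpha \in \Gamma}$ of finite-dimensional subalgebras, which exists by local finiteness, and enlarge each $M_\alpha$ to a finite-dimensional perfect subalgebra. As $L$ is simple it is perfect, and hence $L = L^{(k)}$ for every $k$; so every element of $L$ is a finite sum of $k$-fold nested brackets of elements of $L$. Collecting the finitely many operands needed to express a basis of $M_\alpha$ this way and passing to the subalgebra of $L$ they generate (finite-dimensional by local finiteness) produces, for each $k$, a finite-dimensional $N_k \supseteq M_\alpha$ with $M_\alpha \subseteq N_k^{(k)}$. Choosing $k \geq \dim N_k$ stabilises the derived series at $N_k^{(\infty)}$, a finite-dimensional perfect subalgebra containing $M_\alpha$; the resulting family is a perfect local system. (The associative case is parallel with $\Rad$ read as the nil radical and commutators replaced by products.)

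\emph{Part 2 (triviality of $\Rad L_\beta \cap L_\alpha$ for large $\beta$).} Fix $\alpha \in \Gamma$ and set $R_\beta := \Rad L_\beta \cap L_\alpha$ for $\beta \geq \alpha$. The key preparatory step is monotonicity: if $\alpha \leq \beta \leq \gamma$, then $\Rad L_\gamma \cap L_\beta$ is a solvable ideal of $L_\beta$ (solvable as a subalgebra of $\Rad L_\gamma$; an ideal because $\Rad L_\gamma$ is an ideal of $L_\gamma$ and $L_\beta \subseteq L_\gamma$), so $\Rad L_\gamma \cap L_\beta \subseteq \Rad L_\beta$, and intersecting with $L_\alpha$ gives $R_\gamma \subseteq R_\beta$. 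Thus $(R_\beta)_{\beta \geq \alpha}$ is a decreasing net in the finite-dimensional space $L_\alpha$; picking an index $\alpha_0 \geq \alpha$ where $\dim R_\beta$ attains its minimum value forces $R_\beta = R_{\alpha_0} =: R_\infty$ for all $\beta \geq \alpha_0$.

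It remains to show $R_\infty = 0$, with $\alpha' := \alpha_0$ then fulfilling the conclusion. Assume for contradiction that $R_\infty \neq 0$ and let $I$ denote the ideal of $L$ generated by $R_\infty$. The central claim to prove is $I \cap L_\beta \subseteq \Rad L_\beta$ for every $\beta \geq \alpha_0$: any element of $I$ arises from a finite expression in elements of $R_\infty$ and finitely many ambient elements of $L$, which all lie in some $L_\delta$ with $\delta \geq \beta, \alpha_0$; inside $L_\delta$ such an element lies in the ideal generated by $R_\infty$, which is solvable because $R_\infty \subseteq \Rad L_\delta$, whence in $\Rad L_\delta$; monotonicity then lands $\Rad L_\delta \cap L_\beta$ inside $\Rad L_\beta$. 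Since $R_\infty \neq 0$ makes $I$ nonzero, simplicity of $L$ forces $I = L$, and applying the claim to $L_\alpha \subseteq L = I$ yields $L_\alpha \subseteq \Rad L_\beta$ for every $\beta \geq \alpha_0$. But $L_\alpha$ is perfect while $\Rad L_\beta$ is solvable, and the only perfect subalgebra of a solvable algebra is zero, contradicting $L_\alpha \neq 0$.

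The main obstacle will be the dimension bookkeeping in Part 1: the naive greedy enlargement $M_\alpha \subseteq [N_1, N_1], \ N_1 \subseteq [N_2, N_2], \ldots$ produces a chain whose union is perfect but possibly infinite-dimensional, so actually arranging for $k \geq \dim N_k$ (equivalently, finding a single finite-dimensional $N$ with $M_\alpha \subseteq N^{(\infty)}$) requires additional care, presumably exploiting the structure of simple locally finite algebras. Part 2 is comparatively direct once the monotonicity of $R_\beta$ is isolated; from there the ideal-generation argument together with simplicity finishes things cleanly.
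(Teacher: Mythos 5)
The paper itself does not prove this lemma: it is imported wholesale from Bahturin--Strade (the stated proof is one line, ``this was proved in [BS, Theorem 3.2 and Lemma 3]; the associative case is similar''), so there is no internal argument to measure you against. Your Part 2 is a correct and complete proof of the second assertion, and it is the part of the lemma the paper actually uses: the monotonicity $\Rad L_\gamma\cap L_\beta\subseteq\Rad L_\beta$ (a solvable ideal of $L_\beta$ sits inside its radical), the stabilization of the decreasing net $R_\beta$ inside the finite-dimensional space $L_\alpha$, and the observation that the ideal of $L$ generated by the stable value $R_\infty$ meets every $L_\beta$, $\beta\ge\alpha_0$, inside $\Rad L_\beta$ --- which by simplicity would force the perfect nonzero algebra $L_\alpha$ to be solvable --- all check out, and the same argument runs verbatim in the associative case with the nilpotent radical.

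The genuine gap is in Part 1, and you have correctly diagnosed it yourself: the index $k$ must be fixed before $N_k$ is constructed, and nothing in the greedy enlargement controls $\dim N_k$ in terms of $k$, so the condition $k\ge\dim N_k$ needed to conclude $M_\alpha\subseteq N_k^{(k)}=N_k^{(\infty)}$ cannot be arranged; as you note, the increasing chain $M_\alpha\subseteq[N_1,N_1]$, $N_1\subseteq[N_2,N_2],\dots$ only yields a perfect subalgebra of possibly infinite dimension. A cleaner route, which localizes where the real difficulty sits, is to start from an arbitrary local system $(M_\alpha)$ and pass to the stable terms $P_\alpha=M_\alpha^{(\infty)}$ of the derived series: since every perfect subalgebra of $M_\beta$ lies in $M_\beta^{(\infty)}$, the family $(P_\alpha)$ is directed, each $P_\alpha$ is a perfect ideal of $M_\alpha$, and the union $\bigcup_\alpha P_\alpha$ is an ideal of $L$, hence equals $L$ by simplicity --- \emph{unless} every $M_\alpha$ is solvable, i.e.\ $L$ is locally solvable. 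Thus the entire content of the existence statement reduces to showing that a simple locally finite Lie algebra over a field of characteristic zero cannot be locally solvable, and that is precisely the nontrivial input of [BS, Theorem 3.2] that neither your sketch nor any purely formal bookkeeping supplies.
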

\begin{proof}
In the case of Lie algebras this was proved in \cite[Theorem 3.2 and Lemma 3]{bib:BS}.
Proof of the associative case is similar.\end{proof}
\begin{defn}
\label{defcon} A perfect local system $(L_{\alpha})_{\alpha\in\Gamma}$
is called\emph{ conical} if $\Gamma$ contains a minimal element
$1$ such that 

(1) $L_{1}\subseteq L_{\alpha}$ for all $\alpha\in\Gamma$; 

(2) $L_{1}$ is simple; 

(3) for each $\alpha\in\Gamma$ the restriction of any natural $L_{\alpha}$-module
to $L_{1}$ has a non-trivial composition factor. 
\end{defn}
By the \emph{rank} of a conical system we mean the rank of the simple
Lie algebra $L_{1}$. Note that property (3) of the definition implies
that for every $\alpha\in\Gamma$ and every simple component $S$
of a Levi subalgebra of $L_{\alpha}$ one has $\rk S\ge\rk L_{1}$.
In particular, all these simple components are classical if $\rk L_{1}\ge9$.
\begin{prop}
\cite[Proposition 3.1]{bib:BBZ} \label{conical} Let $L$ be a simple
locally finite Lie algebra and let $\mathfrak{L}=(L_{\alpha})_{\alpha\in\Gamma}$
be a perfect local system of $L$. Let $Q$ be a finite dimensional
simple subalgebra of $L$. Fix any $\beta\in\Gamma$ such that $Q\subseteq L_{\beta}$.
For $\gamma\ge\beta$, denote by $L_{\gamma}^{Q}$ the ideal of $L_{\gamma}$
generated by $Q$. Put $L_{1}^{Q}=Q$ and $\Gamma^{Q}=\{\gamma\in\Gamma\mid\gamma\ge\beta\}\cup\{1\}$.
Then $\mathfrak{L}^{Q}=(L_{\alpha}^{Q})_{\alpha\in\Gamma^{Q}}$ is
a conical local system of $L$ and the following hold. 

(1) Every natural $L_{\alpha}^{Q}$-module is the restriction of a
natural $L_{\alpha}$-module. In particular, the embedding $L_{\alpha}^{Q}\subseteq L_{\alpha}$
is diagonal.

(2) If the local system $\mathfrak{L}$ is diagonal (resp. plain)
then the local system $\mathfrak{L}^{Q}$ is diagonal (resp. plain). 

(3) If the local system $\mathfrak{L}$ is semisimple then the local
system $\mathfrak{L}^{Q}$ is semisimple. \end{prop}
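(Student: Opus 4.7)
The plan is to organize the proof in four stages: (i) verify that $\mathfrak{L}^Q$ is a local system of $L$; (ii) check the conical axioms; (iii) establish part (1); (iv) deduce parts (2) and (3) by inheritance.

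First I would verify the local system property. The covering $L=\bigcup_{\alpha}L_\alpha^Q$ reduces, via simplicity of $L$, to the fact that the $L$-ideal generated by $Q$ is all of $L$, while any specific element is witnessed using only finitely many brackets and so lies in $L_\alpha^Q$ for $\alpha$ large enough. Directedness of $\Gamma^Q$ is inherited from $\Gamma$, and the inclusion $L_\alpha^Q\subseteq L_{\alpha'}^Q$ holds because the $L_{\alpha'}$-ideal generated by $Q$ contains the $L_\alpha$-ideal generated by $Q$.

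The structural heart of the argument, which drives both the conical property and part (1), would be a Levi-type analysis of $L_\alpha^Q$: writing $R=\Rad L_\alpha$ and $L_\alpha/R=Q_1'\oplus\cdots\oplus Q_k'$, the simple subalgebra $Q$ injects into $L_\alpha/R$ (its intersection with $R$ is a solvable ideal of $Q$, hence zero) and projects either trivially or injectively into each $Q_i'$; letting $I$ denote the nontrivial indices, a direct computation shows that the image of $L_\alpha^Q$ in $L_\alpha/R$ equals $\bigoplus_{i\in I}Q_i'$, whence $L_\alpha^Q/\Rad L_\alpha^Q\cong\bigoplus_{i\in I}Q_i'$. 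From this I would deduce that the natural $L_\alpha^Q$-modules are precisely the natural $L_\alpha$-modules $V_i'$ for $i\in I$ restricted to $L_\alpha^Q$, giving part (1). For the conical axiom (3), $Q$ acts on $V_i'$ through the injective projection into $Q_i'\subseteq\mathfrak{gl}(V_i')$, a nonzero action of a semisimple algebra, which by Weyl's theorem must contain a nontrivial composition factor. The embedding $L_\alpha^Q\subseteq L_\alpha$ is then plain because restricting $V_j'$ yields either a natural $L_\alpha^Q$-module ($j\in I$) or a trivial module.

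For part (2), I would argue that the embedding $L_\alpha^Q\subseteq L_{\alpha'}^Q$ (with $\beta\leq\alpha\leq\alpha'$) inherits diagonality along the chain: a natural $L_{\alpha'}^Q$-module is, by (1), a natural $L_{\alpha'}$-module, restricts diagonally (resp.\ plainly) to $L_\alpha$ by hypothesis on $\mathfrak{L}$, and restricts further to $L_\alpha^Q$ with natural/conatural pieces going to natural/conatural or trivial pieces under the Levi analysis. Part (3) is immediate: an ideal of a semisimple Lie algebra is semisimple. The hard part will be the embedding $Q=L_1^Q\subseteq L_\alpha^Q$, for which I would pull the diagonality of $V_i'\downarrow L_\beta$ down to $V_i'\downarrow Q$ along the chain $Q\subseteq L_\beta\subseteq L_\alpha$, invoking Lemma~\ref{BBZ2.5} after replacing $\beta$ by a cofinal enlargement whose ranks exceed the threshold of that lemma. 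This propagation of diagonality down to the minimal term $Q$ is where I expect the main technical difficulty.
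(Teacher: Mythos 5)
Most of your outline is sound, and it is worth noting that the paper itself offers no proof here (it cites \cite[Proposition 3.1]{bib:BBZ} for (1)--(2) and dismisses (3) as obvious), so you are doing real work. The local-system verification, the Levi-type computation identifying the image of $L_\alpha^Q$ in $L_\alpha/\Rad L_\alpha$ with $\bigoplus_{i\in I}Q_i'$, the resulting identification of the natural $L_\alpha^Q$-modules with the restrictions $V_i'\downarrow L_\alpha^Q$ for $i\in I$, the conical axiom via injectivity of the projection of $Q$ into $Q_i'$ plus complete reducibility, and the inheritance argument for the embeddings $L_\alpha^Q\subseteq L_{\alpha'}^Q$ with $\beta\le\alpha\le\alpha'$ are all correct. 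Two small omissions: you never check that each $L_\alpha^Q$ is \emph{perfect}, which is part of the definition of a conical local system --- this is easy ($[L_\alpha^Q,L_\alpha^Q]$ is an ideal of $L_\alpha$ by the Jacobi identity and contains $[Q,Q]=Q$, hence contains the smallest ideal $L_\alpha^Q$ containing $Q$), but it must be said; and calling the embedding $L_\alpha^Q\subseteq L_\alpha$ \emph{plain} overstates part (1), since plainness requires all simple components to be of type $A$; what your analysis gives, and what is claimed, is diagonality.

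The genuine gap is the step you yourself flag as the main difficulty: diagonality of the bottom embedding $Q=L_1^Q\subseteq L_\gamma^Q$. Your plan to invoke Lemma \ref{BBZ2.5} is circular: that lemma deduces diagonality of $L_1\subseteq L_2$ from diagonality of $L_1\subseteq L_3$ for a \emph{larger} $L_3$, so to apply it to $Q$ you would already need to know that $Q$ embeds diagonally into some member of the system, which is precisely the unknown; moreover the lemma requires $\rk L_1>10$, and $\rk Q$ is fixed (possibly equal to $1$), so no ``cofinal enlargement'' of the ambient algebras can help. In fact no argument can close this step for arbitrary $Q$: take $L=\mathfrak{sl}_\infty(F)$ with its standard plain system $(\mathfrak{sl}_n)$ and let $Q$ be a principal $\mathfrak{sl}_2$ inside $\mathfrak{sl}_3$; then $L_\gamma^Q=\mathfrak{sl}_\gamma$ and the natural $\mathfrak{sl}_\gamma$-module restricts to $Q$ with a $3$-dimensional irreducible composition factor, so $Q\subseteq L_\gamma^Q$ is not diagonal. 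Statement (2) must therefore be read as asserting diagonality (resp.\ plainness) of the embeddings indexed by $\Gamma$, i.e.\ $L_\alpha^Q\subseteq L_{\alpha'}^Q$ for $\beta\le\alpha\le\alpha'$, which your part-(1)-based inheritance argument already establishes; the attempted extra step should simply be dropped. (Compare Proposition \ref{diag_sys}, where diagonality of the bottom embedding is obtained only for a specially chosen $Q$ of large rank taken from an ambient diagonal system.)
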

\begin{proof}
Parts (1) and (2) were proved in \cite{bib:BBZ}. Part (3) is obvious. \end{proof}
\begin{prop}
\cite[Corollary 3.3]{bib:BBZ} Simple locally finite Lie algebras
have conical local systems of arbitrary large rank. \end{prop}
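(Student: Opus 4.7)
The plan is to reduce the statement to an existence claim about simple finite-dimensional subalgebras of $L$ of arbitrarily large rank, via Proposition \ref{conical}, and then establish that existence using the perfect local systems supplied by Lemma \ref{radical}.

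For the reduction, observe that Lemma \ref{radical} furnishes $L$ with some perfect local system $\mathfrak{L}$. Given any finite-dimensional simple subalgebra $Q$ of $L$, Proposition \ref{conical} applied to $\mathfrak{L}$ and $Q$ produces a conical local system $\mathfrak{L}^Q$ with minimum term $L_1^Q = Q$. By the definition of rank for a conical system, the rank of $\mathfrak{L}^Q$ equals $\rk Q$. Hence the proposition follows once we show that $L$ contains finite-dimensional simple subalgebras of arbitrarily large rank.

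To produce such subalgebras, I would work in a perfect local system $(L_\alpha)_{\alpha \in \Gamma}$ of $L$. Since $L$ is infinite-dimensional, $\sup_\alpha \dim L_\alpha = \infty$. Applying Lemma \ref{radical} again, for each $\alpha$ there is $\beta \geq \alpha$ with $\Rad L_\beta \cap L_\alpha = 0$; consequently the composition $L_\alpha \hookrightarrow L_\beta \twoheadrightarrow L_\beta / \Rad L_\beta$ is injective, forcing $\dim(L_\beta / \Rad L_\beta) \geq \dim L_\alpha$, and hence the dimensions of the semisimple quotients grow without bound. Choosing Levi subalgebras $S_\beta \subseteq L_\beta$ then yields semisimple subalgebras of $L$ of unbounded dimension, and their simple direct summands form a pool of simple subalgebras of $L$.

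The main obstacle is to rule out the scenario that every such simple summand has rank bounded by some $N$, rather than allowing many copies of fixed small rank. I would argue by contradiction: assume $N$ is an upper bound on the ranks of all simple subalgebras of $L$, fix $Q \subseteq L$ of maximum rank $N$, and form the conical system $\mathfrak{L}^Q$. By the remark following Definition \ref{defcon}, every simple component of any Levi subalgebra of $L_\alpha^Q$ has rank at least $\rk Q = N$, hence exactly $N$. A dimension estimate on the natural modules of components of rank $N$ then forces their restrictions to $Q$ to have only trivial, natural, and co-natural composition factors (for $N$ sufficiently large, which is harmless since the claim is vacuous for small bounds), making the embeddings $Q \subseteq L_\alpha^Q$ diagonal. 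Applying the \emph{moreover} part of Lemma \ref{BBZ2.5} to each chain $Q \subseteq L_\alpha^Q \subseteq L_\beta^Q$ (whose outer embedding is now diagonal and whose intermediate natural modules are non-trivial on $Q$) promotes this to diagonality of the whole system $\mathfrak{L}^Q$. The uniformity that every Levi simple component of every $L_\alpha^Q$ has the same rank $N$ is then so rigid that $L$ must collapse to a non-simple or finite-dimensional algebra, contradicting the hypotheses. This contradiction yields simple subalgebras of $L$ of rank $\geq N$ for every $N$, and the initial reduction completes the proof.
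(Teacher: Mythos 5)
The paper does not prove this proposition; it is quoted directly from \cite[Corollary 3.3]{bib:BBZ}, so there is no in-text argument to compare against. Judged on its own merits, your proposal has the right skeleton but a genuine gap at the decisive step. The reduction is sound: by Proposition \ref{conical} it suffices to exhibit finite dimensional simple subalgebras $Q\subseteq L$ of arbitrarily large rank, and your observation that Lemma \ref{radical} forces the semisimple quotients $L_\beta/\Rad L_\beta$ (hence the Levi subalgebras $S_\beta$) to have unbounded dimension is correct. The problem is the passage from unbounded \emph{dimension} of the Levi subalgebras to unbounded \emph{rank} of their simple components, which is exactly where the content of the proposition lies. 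Your contradiction argument assembles a conical system $\mathfrak{L}^Q$ all of whose Levi components have rank exactly $N$, argues (plausibly, for large $N$) that the system is then diagonal, and concludes with the sentence that this rigidity forces $L$ to ``collapse to a non-simple or finite-dimensional algebra.'' That last implication is asserted, not proved, and it is not obvious: one must actually rule out, e.g., limits of the form $\bigoplus^{k_\alpha}\mathfrak{sl}_{N+1}$ with diagonal connecting maps, whose direct limit is $\mathfrak{sl}_{N+1}\otimes C$ for a commutative limit algebra $C$, and showing such a limit cannot be simple and infinite dimensional is a real argument, not a formality. There is also a secondary issue that Lemma \ref{BBZ2.5} requires rank $>10$, so the case of a small bound $N$ is not ``vacuous'' and is not covered by your diagonality step.

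The clean repair is already available inside the paper. Take any perfect local system (Lemma \ref{radical}), pick any simple component $Q$ of a Levi subalgebra of some nonzero $L_\alpha$, and form the conical system $\mathfrak{L}^Q$ via Proposition \ref{conical}. Theorem \ref{SimpleCriterion} then states directly that for every $\alpha$ there is $\alpha'$ such that every simple component of $L_\beta/\Rad L_\beta$ with $\beta\ge\alpha'$ has dimension at least $\dim L_\alpha^Q$; since $\dim L_\alpha^Q$ is unbounded and a simple Lie algebra of rank $r$ has dimension at most $\max(248,\,2r^2+r)$, the ranks of these components are unbounded. Lifting them into Levi subalgebras gives simple subalgebras of $L$ of arbitrarily large rank, and your initial reduction finishes the proof. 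This replaces your unproved rigidity claim with a result the paper actually establishes.
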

\begin{rem}
Similar results hold for locally finite associative algebras. In particular,
every (involution) simple locally finite associative algebra $A$
has a conical ($*$-invariant) local system of subalgebras, see \cite[Proposition 2.9]{bib:BBZ}.
Moreover, this system will be semisimple if $A$ is locally semisimple. 
\end{rem}
The following two results were essentially proved in \cite[Corollary 3.4]{bib:Bar5}.
\begin{thm}
\label{SimpleCriterion} Let $L$ be a simple locally finite Lie algebra
and let $(L_{\alpha})_{\alpha\in\Gamma}$ be a conical local system
for $L$. Then for every $\alpha\in\Gamma$ there is $\alpha'\in\Gamma$
such that for all $\beta\ge\alpha'$ and all maximal ideals $M$ of
$L_{\beta}$ one has $L_{\alpha}\cap M=0$. In particular, for every
simple component $Q$ of $L_{\beta}/\Rad L_{\beta}$ one has $\dim Q\ge\dim L_{\alpha}$. \end{thm}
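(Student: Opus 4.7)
The plan is to prove the theorem in two stages. First I establish the base case $\alpha = 1$: that $L_1 \cap M = 0$ for every maximal ideal $M$ of every sufficiently large $L_\beta$. Second, I bootstrap from $L_1$ to a general $L_\alpha$ using the simplicity of $L$.

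For the base case, fix a maximal ideal $M$ of $L_\beta$. By Lemma \ref{MaxIdeals}, $L_\beta/M \cong Q$ for some simple component $Q$ of $L_\beta/\Rad L_\beta$; the remark after Definition \ref{defcon} gives $\rk Q \geq \rk L_1$, which we may assume is at least $9$ (by passing to a conical subsystem of large rank) so that $Q$ is classical. Let $V$ be the natural $Q$-module, pulled back to a natural $L_\beta$-module via $L_\beta \twoheadrightarrow Q$. The kernel of the $L_\beta$-action on $V$ is exactly $M$, since $Q$ acts faithfully on its natural module. Condition (3) of the conical system forces $V \downarrow L_1$ to have a non-trivial composition factor, so $L_1$ acts non-trivially on $V$; simplicity of $L_1$ then upgrades this to a faithful action, giving $L_1 \cap M = 0$.

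For the bootstrap, the key consequence of simplicity of $L$ is that for any non-zero $y \in L_\alpha$, the ideal $\langle y \rangle_L$ generated by $y$ in $L$ equals $L$ and hence contains $L_1$; since $L_1$ is finite-dimensional and $L = \dlim L_\gamma$, there is $\gamma(y) \in \Gamma$ with $L_1 \subseteq \langle y \rangle_{L_{\gamma(y)}}$. The technical heart of the proof is to extract a single $\alpha' \in \Gamma$ bounding $\gamma(y)$ uniformly in $y$, so that $L_1 \subseteq \langle y \rangle_{L_{\alpha'}}$ holds for every non-zero $y \in L_\alpha$ simultaneously. I plan to argue this by considering the constructibly-increasing family $U_\gamma := \{y \in L_\alpha : L_1 \subseteq \langle y \rangle_{L_\gamma}\}$, noting that its union covers $L_\alpha \setminus \{0\}$, and applying a Noetherian / quasi-compactness argument on the projective variety $\mathbb{P}(L_\alpha)$ to extract the uniform index. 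This uniform-in-$y$ step is the main obstacle.

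Granted such $\alpha'$, the theorem follows quickly. For $\beta \geq \alpha'$ and a maximal ideal $M$ of $L_\beta$, suppose $L_\alpha \cap M$ contained a non-zero element $y$. Then $\langle y \rangle_{L_\beta} \subseteq M$ (since $M$ is an $L_\beta$-ideal containing $y$), yet $\langle y \rangle_{L_\beta} \supseteq \langle y \rangle_{L_{\alpha'}} \supseteq L_1$ by the choice of $\alpha'$; this forces $L_1 \subseteq M$, contradicting the base case. Hence $L_\alpha \cap M = 0$. The ``in particular'' clause is then immediate, as the composition $L_\alpha \hookrightarrow L_\beta \twoheadrightarrow L_\beta/M \cong Q$ is injective, so $\dim Q \geq \dim L_\alpha$.
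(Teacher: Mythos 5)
Your base case and the final deduction are both sound (the base case is essentially Corollary \ref{SimpleCrit} applied to $Q=L_{1}$, holds for every $\beta$, and needs no rank enlargement). The gap is exactly where you flag it, and it cannot be deferred: the uniform-in-$y$ statement you need --- a single $\alpha'$ with $L_{1}\subseteq\langle y\rangle_{L_{\alpha'}}$ for every non-zero $y\in L_{\alpha}$ --- is essentially equivalent to the theorem itself (if $\langle y\rangle_{L_{\beta}}\neq L_{\beta}$ then $\langle y\rangle_{L_{\beta}}$ lies in a maximal ideal $M$ with $0\neq y\in L_{\alpha}\cap M$), so the entire content of the proof has been pushed into the step you have not carried out. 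The proposed compactness argument does not close it. Writing $\langle y\rangle_{L_{\gamma}}$ as the image of a matrix $A_{\gamma}(y)$ depending linearly on $y$ (iterated adjoints applied to $y$), the set $U_{\gamma}=\{y:L_{1}\subseteq\im A_{\gamma}(y)\}$ is constructible, but there is no reason for it to be Zariski-open: containment of a fixed subspace in the image of a polynomially varying matrix is not an open condition (the image of $(1,t)^{T}$ contains $Fe_{1}$ only at $t=0$), since where the rank is locally constant the image varies continuously and ``contains $W$'' becomes a closed condition on the Grassmannian. Moreover, quasi-compactness of $\mathbb{P}(L_{\alpha})$ only extracts finite subcovers from \emph{open} covers, and Noetherianity controls descending chains of closed sets, not ascending chains of constructible sets: over $F=\overline{\mathbb{Q}}$ the space $\mathbb{P}^{1}$ is an increasing union of finite (hence constructible) subsets, none equal to $\mathbb{P}^{1}$, so an increasing constructible cover indexed by a countable directed set need not stabilize.

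The paper supplies precisely the finiteness input your compactness argument is trying to substitute for. It first chooses $\gamma>\alpha$ with $\Rad L_{\gamma}\cap L_{\alpha}=0$ (Lemma \ref{radical}) and then invokes \cite[Corollary 3.4]{bib:Bar5}, which asserts that the sets of simple components on level $\beta$ accessible from $L_{1}$ and from each simple component of $L_{\gamma}/\Rad L_{\gamma}$ coincide for all sufficiently large $\beta$. Conicality (property (3)) makes every component $L_{1}$-accessible, hence accessible from every component of $L_{\gamma}/\Rad L_{\gamma}$; therefore every maximal ideal $M$ of $L_{\beta}$ meets $L_{\gamma}$ inside $\Rad L_{\gamma}$ and so meets $L_{\alpha}$ trivially. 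To keep your structure you would need either to import that stabilization result or to find a genuinely new argument for the uniformity; the topological argument as proposed will not do it.
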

\begin{proof}
For each $\gamma\in\Gamma$ we denote by $R_{\gamma}$ the solvable
radical of $L_{\gamma}$, by $S_{\gamma}$ the semisimple quotient
$L_{\gamma}/R_{\gamma}$ and by $S_{\gamma}^{1},\dots,S_{\gamma}^{k_{\gamma}}$
the simple components of $S_{\gamma}$. In particular, $R_{1}=0$
and $L_{1}=S_{1}=S_{1}^{1}$. Fix any $\alpha\in\Gamma$. By Lemma
\ref{radical}, there is $\gamma>\alpha$ such that $R_{\gamma}\cap L_{\alpha}=0$
and by \cite[Corollary 3.4]{bib:Bar5} there is $\alpha'>\gamma$
such that the sets of $S_{1}^{1}-,S_{\gamma}^{1}-,S_{\gamma}^{2}-,\dots,S_{\gamma}^{k_{\gamma}}-$accessible
simple components on level $\beta$ coincide for all $\beta\ge\alpha'$.
Recall that for $\beta>\gamma$, a component $S_{\beta}^{i}$ is $S_{\gamma}^{j}$-accessible
if the restriction of the natural $L_{\beta}$-module $V_{\beta}^{i}$
to $L_{\gamma}$ has a composition factor which is non-trivial as
an $S_{\gamma}^{j}$-module. Fix any $\beta\ge\alpha'$. Let $M$ be
a maximal ideal of $L_{\beta}$. Then by Lemma \ref{MaxIdeals}, $L_{\beta}/M\cong S_{\beta}^{i}$
for some $i.$ More exactly, $M$ is the annihilator of the natural
$L_{\beta}$-module $V_{\beta}^{i}$. Note that all components of
$S_{\beta}$ are $S_{1}^{1}$-accessible by the definition of conical
systems (property (3)). This means that $S_{\beta}^{i}$ is $S_{\gamma}^{j}$-accessible
for all $j$, i.e. all simple components of $S_{\gamma}$ act non-trivially
on $V_{\beta}^{i}$ and cannot be in its annihilator $M$. Therefore
$M\cap L_{\gamma}\subset R_{\gamma}$. Since $R_{\gamma}\cap L_{\alpha}=0$,
one has that $M\cap L_{\alpha}=0$, as required. \end{proof}
\begin{cor}
\label{SimpleCrit} Let $L$ be a simple locally finite Lie algebra
and let $(L_{\alpha})_{\alpha\in\Gamma}$ be a conical local system
for $L$. Then for every finite-dimensional simple subalgebra $Q$
of $L$ there exists $\alpha'\in\Gamma$ such that for all $\beta\ge\alpha'$,
$Q\subseteq L_{\beta}$ and the restriction of every natural $L_{\beta}$-module
$V$ to $Q$ has a non-trivial composition factor, i.e. $\{Q,L_{\beta}\mid\beta\ge\alpha'\}$
is a conical local system of $L$. \end{cor}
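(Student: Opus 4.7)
The plan is to reduce Corollary \ref{SimpleCrit} to Theorem \ref{SimpleCriterion} by identifying natural $L_{\beta}$-modules with maximal ideals of $L_{\beta}$ via the annihilator map.

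First, since $Q$ is finite dimensional and $L=\bigcup_{\alpha\in\Gamma}L_{\alpha}$, the defining property of a local system produces some $\alpha_{0}\in\Gamma$ with $Q\subseteq L_{\alpha_{0}}$. Apply Theorem \ref{SimpleCriterion} to this $\alpha_{0}$: there is $\alpha'\in\Gamma$, which we may take $\geq\alpha_{0}$, such that for every $\beta\geq\alpha'$ and every maximal ideal $M$ of $L_{\beta}$ one has $L_{\alpha_{0}}\cap M=0$, and in particular $Q\cap M=0$.

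Next, I want to identify natural modules with maximal ideals. Fix $\beta\geq\alpha'$ and let $V$ be a natural $L_{\beta}$-module. By the preliminaries, $\Rad L_{\beta}$ annihilates $V$, so the action of $L_{\beta}$ on $V$ factors through the semisimple quotient $L_{\beta}/\Rad L_{\beta}\cong Q_{1}\oplus\dots\oplus Q_{n}$; by construction $V$ is the first fundamental module of exactly one component $Q_{i}$ and is trivial over the others. Hence the annihilator $M=\mathrm{Ann}_{L_{\beta}}(V)$ satisfies $L_{\beta}/M\cong Q_{i}$, so $M$ is a maximal ideal of $L_{\beta}$ (as in Lemma \ref{MaxIdeals}).

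Combining the two steps: $Q\cap M=0$, so the simple algebra $Q$ injects into $L_{\beta}/M$, and in particular $Q$ acts non-trivially on $V$. Therefore the restriction $V\downarrow Q$ cannot consist entirely of trivial composition factors, giving condition (3) of Definition \ref{defcon}. Conditions (1) and (2) are immediate: $Q\subseteq L_{\beta}$ for all $\beta\geq\alpha'$ by the choice of $\alpha_{0}$, and $Q$ is simple by hypothesis. The indexing set $\{1\}\cup\{\beta\in\Gamma\mid\beta\geq\alpha'\}$ (with $L_{1}^{Q}:=Q$) is directed and its union is still $L$ since $\{\beta\geq\alpha'\}$ is cofinal in $\Gamma$, so $\{Q,L_{\beta}\mid\beta\geq\alpha'\}$ is indeed a conical local system of $L$.

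There is no real obstacle here; the only conceptual point is the bijection between natural $L_{\beta}$-modules and maximal ideals of $L_{\beta}$, which is exactly what is used in the proof of Theorem \ref{SimpleCriterion}, so the corollary is essentially a repackaging of that theorem once one specializes from $L_{\alpha_{0}}$ to its simple subalgebra $Q$.
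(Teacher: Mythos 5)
Your proposal is correct and follows essentially the same route as the paper: apply Theorem \ref{SimpleCriterion} to some $L_{\alpha_{0}}$ containing $Q$, observe that the annihilator of a natural $L_{\beta}$-module is a maximal ideal, and conclude from $Q\cap M=0$ that $Q$ acts non-trivially. The paper's proof is just a terser version of the same argument; your extra care with $\alpha'\ge\alpha_{0}$ and the cofinality of $\{\beta\ge\alpha'\}$ is harmless and fine.
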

\begin{proof}
Fix any $\alpha\in\Gamma$ such that $Q\subseteq L_{\alpha}$. By
Theorem \ref{SimpleCriterion}, there is $\alpha'\in\Gamma$ such
that for all $\beta\ge\alpha'$ and all maximal ideals $M$ of $L_{\beta}$
one has $L_{\alpha}\cap M=0$. Let $V$ be a natural $L_{\beta}$-module.
Then its annihilator $M$ is a maximal ideal of $L_{\beta}$. Since
$Q\cap M=0$, $Q$ acts non-trivially on $V$. 
\end{proof}
We will need a version of the above theorem for associative algebras.
\begin{thm}
\label{SimpleCriterionAssoc} Let $A$ be a (involution) simple locally
finite associative algebra and let $(A_{\alpha})_{\alpha\in\Gamma}$
be a conical perfect ($*$-invariant) local system for $A$. Then
for every $\alpha\in\Gamma$ there is $\alpha'\in\Gamma$ such that
for all $\beta\ge\alpha'$ and all ($*$-invariant) maximal ideals
$M$ of $A_{\beta}$ one has $A_{\alpha}\cap M=0$.\end{thm}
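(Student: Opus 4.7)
The plan is to mirror, step by step, the proof of Theorem \ref{SimpleCriterion} for Lie algebras, translating each ingredient into its associative counterpart. The structural skeleton is unchanged: for a perfect finite dimensional associative algebra $A_\beta$ with solvable (i.e.\ Jacobson) radical $R_\beta$, the semisimple quotient $A_\beta/R_\beta$ decomposes as a sum of simple components $S_\beta^1,\dots,S_\beta^{k_\beta}$, each isomorphic to a matrix algebra over $F$, and each having a unique natural module $V_\beta^i$. By Lemma \ref{MaxIdeals}'s associative analogue, the maximal ideals of $A_\beta$ are exactly the annihilators of the $V_\beta^i$; in the $*$-invariant setting, a $*$-invariant maximal ideal is either the annihilator of a single $*$-stable $V_\beta^i$ or of a $*$-swapped pair $V_\beta^i\oplus V_\beta^j$.

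The first step is to fix $\alpha$ and apply Lemma \ref{radical} (valid for associative algebras as noted in its proof) to produce $\gamma\ge\alpha$ with $R_\gamma\cap A_\alpha=0$. The second step is to invoke the associative analogue of \cite[Corollary 3.4]{bib:Bar5} (which, as the excerpt states, holds with an analogous proof) to produce $\alpha'\ge\gamma$ such that, for every $\beta\ge\alpha'$, the sets of $S_1^1$- and $S_\gamma^j$-accessible simple components at level $\beta$ coincide; here accessibility is defined in the evident associative way, via non-triviality of composition factors of restrictions of natural modules.

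The third step is the key non-triviality argument. Fix $\beta\ge\alpha'$ and a (*-invariant) maximal ideal $M$ of $A_\beta$, so that $M$ is the annihilator of a natural module $V_\beta^i$ (or, in the $*$-invariant case, of $V_\beta^i\oplus V_\beta^j$). By property (3) of a conical system, $S_1^1$ acts non-trivially on every natural $A_\beta$-module, so every simple component $S_\beta^i$ appearing in the quotient is $S_1^1$-accessible, hence by the stabilization of accessibility also $S_\gamma^j$-accessible for every $j$. Thus each $S_\gamma^j$ acts non-trivially on $V_\beta^i$, so no $S_\gamma^j$ is contained in the image of $M\cap A_\gamma$ in $S_\gamma$. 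Therefore $M\cap A_\gamma\subseteq R_\gamma$, and intersecting with $A_\alpha$ gives $M\cap A_\alpha\subseteq R_\gamma\cap A_\alpha=0$.

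The main obstacle I anticipate is the $*$-invariant variant: one must check that a $*$-invariant maximal ideal of $A_\beta$ still has a description as the annihilator of a sum of natural modules of $A_\beta$, and that the accessibility argument goes through without modification when the ideal cuts out either a single $*$-stable component or a $*$-swapped pair. This is essentially a bookkeeping issue: in the swapped case, $A_\beta/M\cong S_\beta^i\oplus S_\beta^j$ with $(S_\beta^i)^*=S_\beta^j$, but the accessibility argument applies separately to $V_\beta^i$ and to $V_\beta^j$, and either one suffices to force any given $S_\gamma^j$ out of $M$. Once this is observed, the proof concludes exactly as in the non-involution case.
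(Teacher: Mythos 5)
Your proposal is correct and follows exactly the route the paper intends: the published proof of this theorem is literally ``similar to that of the previous theorem,'' and you have carried out that translation of Theorem \ref{SimpleCriterion} to the associative setting, including the only genuinely new point (that a $*$-invariant maximal ideal is the annihilator of either a single $*$-stable natural module or a $*$-swapped pair, to which the accessibility argument applies componentwise). No gaps.
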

\begin{proof}
The proof is similar to that of the previous theorem.\end{proof}
\begin{prop}
\label{diag_sys} Let $L$ be a simple diagonal locally finite Lie
algebra and let $(L_{\alpha})_{\alpha\in\Gamma}$ be a conical local
system of $L$. Then for every $n\in\mathbb{N}$ there is $\alpha'\in\Gamma$
and a simple subalgebra $Q$ of $L$ with $\rk Q>n$ such that $Q\subseteq L_{\beta}$
for all $\beta\ge\alpha'$ and $\{Q,L_{\beta}\mid\beta\ge\alpha'\}$
is a conical diagonal local system of $L$ of rank $>n$. \end{prop}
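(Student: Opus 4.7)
The plan is to extract a simple subalgebra $Q$ of sufficiently large rank from an auxiliary diagonal conical local system of $L$, then graft it onto the tail of the given system $(L_\alpha)$ using Corollary~\ref{SimpleCrit}, and finally promote the resulting conical system to a diagonal one by comparing it against the auxiliary system via Lemma~\ref{BBZ2.5}. The main obstacle is that the given $(L_\alpha)$ is not assumed to be diagonal at all, so diagonality must be imported from outside; keeping the rank hypothesis $\rk>10$ of Lemma~\ref{BBZ2.5} active at each invocation is the technical cost.

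First I would exploit the hypothesis that $L$ is diagonal: start from any diagonal local system of $L$, pick a simple subalgebra $Q_0\subseteq L$ of rank $>\max(n,10)$ (which exists, since the BBZ existence result produces conical systems of arbitrarily large rank whose minimal elements are simple), and apply Proposition~\ref{conical}(2) to obtain a diagonal conical local system $(M_\mu)_{\mu\in\Delta}$ of $L$ of rank $r:=\rk Q_0>\max(n,10)$. Set $Q:=M_1$, so $\rk Q=r>n$. Corollary~\ref{SimpleCrit} applied to $Q$ and the given conical system $(L_\alpha)$ then produces $\alpha'\in\Gamma$ for which $\mathfrak{L}':=\{Q,L_\beta\mid\beta\ge\alpha'\}$ is a conical local system of $L$ of rank $r>n$, with the restriction of every natural $L_\beta$-module to $Q$ having a non-trivial composition factor.

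Next I would verify that every embedding in $\mathfrak{L}'$ is diagonal, using $(M_\mu)$ as scaffolding. For each $\beta\ge\alpha'$, pick $\mu(\beta)\in\Delta$ with $L_\beta\subseteq M_{\mu(\beta)}$ (possible since $L_\beta$ is finite dimensional and $L=\bigcup_\mu M_\mu$). Then $Q\subseteq L_\beta\subseteq M_{\mu(\beta)}$, where the outer embedding $Q=M_1\subseteq M_{\mu(\beta)}$ is diagonal by construction of $(M_\mu)$, and property~(3) of $(M_\mu)$ gives $\rk M_{\mu(\beta)}\ge\rk Q>10$. The ``moreover'' clause of Lemma~\ref{BBZ2.5}, fed with the non-triviality property built into $\mathfrak{L}'$, then forces both $Q\subseteq L_\beta$ and $L_\beta\subseteq M_{\mu(\beta)}$ to be diagonal.

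Finally, for $\alpha'\le\beta\le\beta'$, choose $\mu\in\Delta$ with $\mu\ge\mu(\beta),\mu(\beta')$. The composition $L_\beta\subseteq M_{\mu(\beta)}\subseteq M_\mu$ is diagonal (the first embedding by the previous paragraph, the second by diagonality of $(M_\mu)$), so $L_\beta\subseteq M_\mu$ is diagonal. Property~(3) of $\mathfrak{L}'$ yields $\rk L_\beta\ge\rk Q>10$, so the first clause of Lemma~\ref{BBZ2.5} applied to $L_\beta\subseteq L_{\beta'}\subseteq M_\mu$ delivers the diagonality of $L_\beta\subseteq L_{\beta'}$, completing the argument.
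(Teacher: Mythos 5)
Your proof is correct and follows essentially the same route as the paper: take $Q$ to be the minimal (simple) member of an auxiliary conical diagonal local system of rank $>\max(n,10)$, invoke Corollary~\ref{SimpleCrit} to make the tail $\{Q,L_{\beta}\mid\beta\ge\alpha'\}$ conical, and transfer diagonality from the auxiliary system via Lemma~\ref{BBZ2.5}. The only difference is bookkeeping: the paper forms the single chain $Q\subseteq L_{\beta}\subseteq L_{\beta'}\subseteq M_{\delta}$ and applies Lemma~\ref{BBZ2.5} twice anchored at $Q$, which avoids your (harmless, but easily sidestepped) appeal to transitivity of diagonal embeddings.
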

\begin{proof}
Since $L$ is diagonal, by \cite[Theorem 3.8]{bib:Bar5} $L$ has
a conical diagonal local system $(M_{\delta})_{\delta\in\Delta}$
of rank $>\max\{10,n\}$. Note that $M_{1}$ is simple of rank $>\max\{10,n\}$.
Put $Q=M_{1}$. By Corollary \ref{SimpleCrit}, there is $\alpha'\in\Gamma$
such that $Q\subseteq L_{\alpha'}$ and for all $\beta\ge\alpha'$
the restriction of every natural $L_{\beta}$-module $V$ to $Q$
has a non-trivial composition factor. It remains to prove that the
embeddings $Q\subseteq L_{\beta_{1}}$ and $L_{\beta_{1}}\subseteq L_{\beta_{2}}$
are diagonal for all $\beta_{2}>\beta_{1}\ge\alpha'$. Fix any $\delta\in\Delta$
such that $L_{\beta_{2}}\subseteq M_{\delta}$, so we have a chain
of embeddings
\[
Q=M_{1}\subseteq L_{\beta_{1}}\subseteq L_{\beta_{2}}\subseteq M_{\delta}.
\]
Since $\rk Q>10$ and the embedding $Q\subseteq M_{\delta}$ is diagonal,
by Lemma \ref{BBZ2.5}, the embedding $Q\subseteq L_{\beta_{2}}$
is diagonal. Applying this lemma again to the triple $Q\subseteq L_{\beta_{1}}\subseteq L_{\beta_{2}}$,
we get that the embeddings $Q\subseteq L_{\beta_{1}}$ and $L_{\beta_{1}}\subseteq L_{\beta_{2}}$
are diagonal, as required. \end{proof}
\begin{thm}
\label{finitaryCriterion} Let $L$ be a simple diagonal locally finite
Lie algebra and let $(L_{\alpha})_{\alpha\in\Gamma}$ be a perfect
local system for $L$. Assume that there is $\alpha\in\Gamma$, a
non-zero $x\in L_{\alpha}$ and a natural number $k$ such that for
all $\beta\ge\alpha$, the rank of $x$ is $\le k$ on every natural
$L_{\beta}$-module. Then $L$ is finitary. \end{thm}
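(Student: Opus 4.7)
The plan is to convert the rank bound on $x$ into a uniform multiplicity bound in the diagonal embeddings, and then use this to exhibit $L$ as a finitary Lie algebra.

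First, I pass to a well-behaved local system. Applying Proposition \ref{diag_sys} (and, if needed, Corollary \ref{SimpleCrit} to ensure $x$ lies in the simple base), I may assume $(L_{\alpha})_{\alpha\in\Gamma}$ is a conical diagonal local system of rank greater than $\max\{10,k\}$, with $L_{1}$ simple and $x\in L_{1}$. Since $x\neq 0$ and $L_{1}$ is simple, $x$ acts on the natural $L_{1}$-module $V_{1}$ with some rank $r\geq 1$.

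Next, I compute the rank of $x$ on any natural $L_{\beta}$-module $V$. Because $L_{1}\subseteq L_{\beta}$ is diagonal and $L_{1}$ is simple, $V\downarrow L_{1}\cong V_{1}^{d^{+}}\oplus(V_{1}^{*})^{d^{-}}\oplus T$ for some non-negative integers $d^{\pm}=d^{\pm}(V,\beta)$ and a trivial summand $T$. Since $x$ acts on both $V_{1}$ and $V_{1}^{*}$ with rank $r$ and trivially on $T$, a direct computation gives $\mathrm{rank}(x|_{V})=(d^{+}+d^{-})r$. The hypothesis $\mathrm{rank}(x|_{V})\leq k$ therefore yields the uniform bound $d^{+}+d^{-}\leq M:=\lfloor k/r\rfloor$ for all $\beta\geq\alpha$ and all natural $L_{\beta}$-modules $V$.

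The same analysis applied to the chain $L_{1}\subseteq L_{\gamma}\subseteq L_{\beta}$ for arbitrary $\gamma\in\Gamma$ shows that the combined multiplicity of any natural or conatural $L_{\gamma}$-module $W$ in $V\downarrow L_{\gamma}$ is bounded independently of $\beta$: conicality forces $W\downarrow L_{1}$ to contain some copy of $V_{1}$ or $V_{1}^{*}$, so an $m$-fold occurrence of $W$ in $V\downarrow L_{\gamma}$ forces at least $m$ copies of $V_{1}$ or $V_{1}^{*}$ in $V\downarrow L_{1}$, whence $m\leq M$. Consequently every $y\in L_{\gamma}$ acts on every natural $L_{\beta}$-module with rank $\leq k(y)$ for some constant depending only on $y$.

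Finally, I assemble a faithful finite-rank representation of $L$ by choosing a compatible directed system of natural $L_{\beta}$-modules $V^{\beta}$ and forming $V^{\infty}=\dlim V^{\beta}$; every $y\in L$ then acts on $V^{\infty}$ with rank $\leq k(y)$, making $L$ finitary. The main obstacle is producing this compatible system, since the natural modules are not canonically related under the embeddings $L_{\beta}\subseteq L_{\beta'}$. I would bypass this by passing to the $\mathfrak{P}^{*}$- (or $\mathfrak{P}$-) enveloping algebra $A=A(L)$ recalled in the introduction: the uniform multiplicity bound produces a bounded-dimensional Peirce corner $xA_{\beta}x$ across $\beta$, giving rise to a non-zero element of the socle of $A$; by the classification of (involution) simple locally finite associative algebras, $A$ is then finitary, so $L=\mathfrak{su}^{*}(A)$ or $L=[A,A]$ is finitary as well.
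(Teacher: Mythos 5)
Your overall strategy ultimately converges on the paper's: pass to the $\mathfrak{P}^{*}$-envelope $A$ and bound the Peirce corner $xA_{\beta}x$. But as written the argument has several genuine gaps. First, the normalization ``$x\in L_{1}$ with $L_{1}$ simple'' is not available: Proposition \ref{diag_sys} (or Corollary \ref{SimpleCrit}) produces a conical diagonal system whose base is a \emph{specific} simple subalgebra $Q$ of large rank, and a general non-zero $x\in L_{\alpha}$ need not lie in $Q$, nor in any simple subalgebra that could serve as such a base (it could, for instance, have a non-trivial component in the radical of every $L_{\beta}$ that contains it). The paper never needs this reduction. Second, your passage from composition-factor multiplicities to rank bounds for arbitrary $y\in L_{\gamma}$ is invalid: $L_{\gamma}$ is only perfect, so $V\downarrow L_{\gamma}$ need not be completely reducible, and the rank of $y$ on $V$ is \emph{not} bounded by the sum of its ranks on composition factors (already $e_{12}$ acting on $F^{2}$ with the flag $0\subset Fe_{1}\subset F^{2}$ has rank $1$ but rank $0$ on both quotients); radical elements act trivially on all composition factors yet can have unbounded rank. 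This step happens to be unnecessary, but it cannot stand as stated.

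Third, and most importantly, the decisive claims in your last paragraph are asserted rather than proved. You need (i) $xAx\ne 0$ --- the paper gets this because the ideal of $A$ generated by the skew element $x$ is $*$-invariant, hence equals $A$, and $A=A^{3}$; and (ii) a uniform bound $\dim xA_{\beta}x\le 2k^{2}$ --- the paper obtains this via Theorem \ref{SimpleCriterionAssoc}, embedding $A_{\beta}$ into a semisimple quotient $A_{\gamma}/M\cong\en U$ or $\en W_{1}\oplus\en W_{2}$ whose natural modules are natural or co-natural $L_{\gamma}$-modules, on which $x$ has rank $\le k$. Your ``uniform multiplicity bound produces a bounded-dimensional Peirce corner'' skips exactly this embedding, which is the heart of the proof. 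Finally, the appeal to the socle and to a classification is an unnecessary detour: once $\dim xAx<\infty$ and $xAx\ne 0$, the element $x$ acts with finite non-zero rank on $V=Ax$, the finite-rank elements of $L$ on $V$ form a non-zero ideal, and simplicity of $L$ immediately gives that $L$ is finitary.
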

\begin{proof}
By Proposition \ref{diag_sys}, we can assume that $(L_{\alpha})_{\alpha\in\Gamma}$
is a conical diagonal local system for $L$ of rank $>10$. Let $A$
be its involution simple associative $\mathfrak{P}^{*}$-envelope
and let $A_{\alpha}$ be the subalgebra of $A$ generated by $L_{\alpha}$.
Then it follows from the construction of $A$ (see proof of Theorem
1.3 in \cite{bib:BBZ}), that $(A_{\alpha})_{\alpha\in\Gamma}$ is
a conical diagonal local system for $A$, $\mathfrak{su}^{\ast}(A_{\alpha})=L_{\alpha}$,
every natural $L_{\alpha}$-module is lifted to $A_{\alpha}$ and
every irreducible $A_{\alpha}$-module is either natural or co-natural
$L_{\alpha}$-module. Let $B$ be the ideal of $A$ generated by $x$.
Since $x^{*}=-x$, $B$ is $*$-invariant, so $B=A$. Note that $xAx\ne0$.
Indeed, otherwise $A=A^{3}=BAB=0$. Therefore $x$ acts nontrivially
on the left $A$- (and $L$-) module $V=Ax$. We claim that $\dim xAx\le2k^{2}$.
It is enough to show that $\dim xA_{\beta}x\le2k^{2}$ for all large
$\beta$. By Theorem \ref{SimpleCriterionAssoc}, there is $\gamma>\beta$
and a maximal $*$-invariant ideal $M$ of $A_{\gamma}$ such that
$M\cap A_{\beta}=0$. Note that the quotient $Q=A_{\gamma}/M$ is
either simple or the direct sum of two simple components, so $Q$
is isomorphic to $\en U$ or $\en W_{1}\oplus\en W_{2}$ where $U$
and $W_{1}$ are natural $L_{\gamma}$-modules and $W_{2}$ is co-natural.
Since $M\cap A_{\beta}=0$, we have an isomorphic image of $A_{\beta}$
in $Q$. Assume first that $Q\cong\en U$. Since $x$ is of rank $\le k$
on $U$, it is easy to see that $\dim xQx\le k^{2}$ (e.g. by using
the Jordan canonical form of $x$). Similarly, if $Q\cong\en W_{1}\oplus\en W_{2}$,
we get that $\dim xQx\le2k^{2}$. Therefore, $\dim xA_{\beta}x\le2k^{2}$
and $\dim xAx\le2k^{2}$, as required. Thus, $x$ is a finite rank
transformation of $V=Ax$. Note that all finite rank transformations
of $V$ in $L$ form an ideal of $L$. Since $L$ is simple, $V$
is a non-trivial finitary module for $L$, so $L$ is finitary. \end{proof}
\begin{defn}
Let $L$ be a Lie algebra. An \emph{inner ideal} of $L$ is a subspace
$I$ of $L$ such that $[I,[I,L]]\subseteq I$. 
\end{defn}
Although inner ideals are not ideals in general (not even subalgebras)
it is easy to see that they are well-behaved with respect to subalgebras
and factor algebras:
\begin{lem}
\label{Inner0} Let $I$ be an inner ideal of a Lie algebra $L$. 

(1) Let $H$ be a subalgebra of $L$. Then $I\cap H$ is an inner
ideal of $H$. 

(2) Let $J$ be an ideal of L then $(I+J)/J$ is an inner ideal of
$L/J$. 
\end{lem}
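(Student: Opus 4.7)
The plan is to verify the defining inclusion $[I',[I',L']]\subseteq I'$ directly in each case, using only the bilinearity of the bracket together with the fact that subalgebras are closed under brackets and ideals absorb them.

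For part (1), I would start from the two inclusions $I\cap H\subseteq I$ and $I\cap H\subseteq H\subseteq L$. Since $I$ is inner in $L$, applying the inner ideal condition of $I$ inside $L$ gives
\[
[I\cap H,\,[I\cap H,H]]\subseteq [I,[I,L]]\subseteq I.
\]
On the other hand, $H$ is a subalgebra, so the same double bracket lies in $H$. Intersecting the two containments yields $[I\cap H,[I\cap H,H]]\subseteq I\cap H$, as required. Note that we never need $I\cap H$ to be a subalgebra; inner ideals are not required to be.

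For part (2), I would fix $a,b\in I+J$ and $c\in L$ and write $a=a_1+a_2$, $b=b_1+b_2$ with $a_1,b_1\in I$ and $a_2,b_2\in J$. Expanding by bilinearity,
\[
[a,[b,c]] = [a_1,[b_1,c]] + [a_1,[b_2,c]] + [a_2,[b_1,c]] + [a_2,[b_2,c]].
\]
The first summand lies in $I$ because $I$ is an inner ideal of $L$; each of the remaining three summands lies in $J$ because $J$ is an ideal of $L$ (in the last three terms at least one of the arguments of the outer or inner bracket is in $J$). Hence $[a,[b,c]]\in I+J$, and passing to $L/J$ gives $[(I+J)/J,[(I+J)/J,L/J]]\subseteq (I+J)/J$.

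No step looks like a genuine obstacle; the whole lemma is a direct unwinding of the definitions, and the only minor subtlety is remembering in part (1) that an inner ideal need not be a subalgebra, so we really do only have to check the one double-bracket inclusion.
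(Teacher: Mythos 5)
Your proof is correct. The paper states this lemma without proof (prefacing it only with ``it is easy to see''), and your direct verification of the double-bracket inclusion in each case --- using $I\cap H\subseteq I$ together with closure of $H$ under brackets for (1), and the bilinear expansion with $J$ absorbing any bracket containing an element of $J$ for (2) --- is exactly the routine argument the authors are implicitly invoking.
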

The following classifies the inner ideals of the classical finite
dimensional Lie algebras over $F$. This is only a very particular
case of the results proven in \cite{bib:Benk1,bib:Benk3}. 
\begin{thm}
\label{ClassInnerFD}\cite[Theorem 5.1]{bib:Benk1}\cite[Theorem 6.3(i)]{bib:Benk3}
Let $V$ be a finite dimensional vector space over an algebraically
closed field $F$ of characteristic zero. Let $A=\en V$ and $\Phi$
(resp. $\Psi)$ be a non-degenerate symmetric (resp. skew-symmetric)
form on $V$. Let $\ast$ be the involution of $A$ induced by 
$\Phi$ (resp. $\Psi)$.

(1) Let $L=\mathfrak{sl}(V)$. A subspace $I$ of $L$ is a proper inner
ideal of $L$ if and only if there exist idempotents $e$ and $f$
in $A$ such that $I=eAf$ and $fe=0$. 

(2) Let $\mbox{ }L=\mathfrak{sp}(V,\Psi)$ and $\dim V>4$. A subspace
$I$ of $L$ is a proper inner ideal of $L$ if and only if there exists
an idempotent $e$ in $A$ such that $I=eLe^{\ast}$ and $e^{\ast}e=0$
(equivalently, $I=[U,U]=span\{u^{\ast}v+v^{\ast}u|u,v$ $\in U\}$
where $U$ is a totally isotropic subspace of $V$ and $u^{*}v\in\en V$
is defined as $(u^{*}v)(w)=\Psi(w,u)v$ for all $w\in V$). 

(3) Let $L=\mathfrak{o}(V,\Phi)$ and $\dim V>4$. A subspace $I$
of $L$ is a proper inner ideal of $L$ if and only if one of the following
holds. 

(i) $I=eLe^{\ast}$ where $e\in A$ an idempotent such that $e^{*}e=0$. 

(ii) $I=[v,H^{\perp}]$ where $v\in V$ is a nonzero isotropic vector
of $H$, and $H$ is a hyperbolic plane of $V$ (equivalently, there
is a basis $\{x_{1},\dots,x_{n}\}$ of $V$ such that $I$ is the
$F$-span of the matrix units $e_{1j}-e_{j2}$, $j\ge3$, with respect
to this basis \cite[4.1]{bib:Benk3}). 

(iii) $I$ is a Type 1 point space of dimension greater than 1.
\end{thm}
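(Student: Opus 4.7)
The plan is to treat each of the three classical types in turn, with the forward ``if'' direction being a direct verification using the Peirce-type identities forced by the idempotents, and the reverse direction being the substantive content. Throughout I regard $L$ as sitting inside the associative algebra $A=\en V$.

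For part (1), to show $I=eAf$ is an inner ideal when $fe=0$, first note $\text{tr}(eaf)=\text{tr}(afe)=0$, so $I\subseteq\mathfrak{sl}(V)$. Also $(eAf)(eAf)=eA(fe)Af=0$, so the associative product of any two elements of $I$ vanishes. Expanding $[x,[y,z]]$ for $x,y\in I$, $z\in L$ and discarding vanishing terms shows $[x,[y,z]]=-xzy-yzx\in eAf=I$. For the converse, I would follow the Jordan-theoretic route: first show that every element of a proper inner ideal is nilpotent on $V$ (using that otherwise the semisimple part would generate all of $L$ under $\ad$), and then upgrade this to the associative identity $I\cdot I=0$ in $A$. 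From $I^2=0$ one extracts the subspaces $U:=IV\subseteq V$ and $W:=\{v\in V\mid Iv=0\}\supseteq U$; any element of $I$ is determined by sending $V$ into $U$ and annihilating $W$. Choosing $f$ to be an idempotent projecting along $W$ onto a complement and $e$ an idempotent projecting onto $U$ along a complement then recovers $I=eAf$ with $fe=0$.

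For (2) and (3) I would use the involution. Since $L=\mathfrak{u}^*(A)$ (or its commutator), any proper inner ideal $I$ of $L$ is contained in an inner ideal $\tilde I$ of $\mathfrak{sl}(V)$, and by part (1) $\tilde I=eAf$ with $fe=0$. The $*$-invariance of $I$ (inherited because $I\subseteq L$) can be leveraged by a symmetrization/change-of-idempotents argument to arrange $f=e^*$, whereupon $I=eAe^*\cap L=eLe^*$ with $e^*e=0$. In the symplectic case this exhausts all possibilities because the form is non-degenerate alternating and every $*$-stable pair of orthogonal idempotents arises this way (given $\dim V>4$, so that $L$ is simple of sufficient rank).

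The main obstacle is the orthogonal case (3), specifically the exotic Type~1 point space inner ideals in (iii). These are not of the form $eLe^*$ and have no analogue for $\mathfrak{sl}$ or $\mathfrak{sp}$, so they must be produced and detected separately. I would verify directly, using the decomposition $V=H\oplus H^\perp$ with $H$ a hyperbolic plane and $v\in H$ isotropic, that $I=[v,H^\perp]$ is $\ad$-nilpotent and closed under the inner ideal operation (the computation reduces to the fact that $[v,H^\perp]\cdot[v,H^\perp]=0$ after one uses $\Phi(v,v)=0$). For the classification, given a proper inner ideal not of the form (i), a careful rank analysis of the minimal elements of $I$ forces either type (ii) (when some element has rank $2$ of hyperbolic type) or type (iii) (the rank-$1$ ``point'' case), exploiting the peculiar feature of orthogonal geometry that rank-one skew elements $v\wedge w$ exist only when $v,w$ span an isotropic plane. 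The $\dim V>4$ hypothesis is needed here to rule out exceptional low-dimensional coincidences.
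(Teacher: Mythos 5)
First, note that the paper does not prove this theorem at all: it is imported verbatim from Benkart \cite{bib:Benk1} and Benkart--Fern\'{a}ndez L\'{o}pez \cite{bib:Benk3} (the text explicitly calls it ``a very particular case of the results proven'' there), so there is no internal proof to measure your attempt against. Judged on its own, your sketch gets the easy ``if'' direction of (1) right (the trace computation and $[x,[y,z]]=-xzy-yzx$ when $I^{2}=0$ are exactly the standard verification, cf.\ Lemma \ref{reglemma}(1) in the paper), but every ``only if'' direction has a genuine gap. In (1), the step ``upgrade ad-nilpotence to the associative identity $I\cdot I=0$'' is precisely the hard core of Benkart's theorem and cannot be waved through: nilpotence of each element of $I$ on $V$ (which does follow from Lemma \ref{Abelian} plus $x$ being ad-nilpotent) does not by itself give $xy=0$ in $\en V$ for $x,y\in I$. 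Moreover, even granting $I^{2}=0$, your construction of $e$ and $f$ from $U=IV$ and $W=\bigcap_{x\in I}\Ker x$ only yields the inclusion $I\subseteq eAf$; equality is a separate claim. For example, the span of $e_{13}$ and $e_{24}$ in $\mathfrak{sl}_{4}$ squares to zero and is a proper subspace of the corresponding $eAf$ (which is four-dimensional), and it is \emph{not} an inner ideal --- so the ``fullness'' $I=eAf$ must be extracted from the inner-ideal identity (e.g.\ via $[I,[I,L]]=I$, Lemma \ref{InnerDecompLemma}), and your sketch does not do this.

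The reduction of (2) and (3) to (1) is also unsound as stated. The inner-ideal condition $[I,[I,L]]\subseteq I$ is relative to the ambient Lie algebra: an inner ideal of $\mathfrak{sp}(V,\Psi)$ or $\mathfrak{o}(V,\Phi)$ is in general \emph{not} an inner ideal of $\mathfrak{sl}(V)$ (Lemma \ref{Inner0}(1) goes only in the restriction direction), and ``the inner ideal of $\mathfrak{sl}(V)$ generated by $I$'' may well be all of $\mathfrak{sl}(V)$, at which point part (1) gives you nothing. Indeed the existence of the exceptional inner ideals (3)(ii)--(iii), which satisfy $I^{3}=0$ but not $I^{2}=0$ and are not of the form $eAe^{*}$, shows that no uniform ``symmetrize the idempotents to get $f=e^{*}$'' argument can work; and your treatment of exactly those cases is a plan (``a careful rank analysis \dots forces either type (ii) or type (iii)'') rather than an argument. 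A small factual slip in the same paragraph: $[v,w]=v^{*}w-\epsilon w^{*}v$ generically has rank $2$, and in the orthogonal case there are \emph{no} nonzero rank-one skew elements at all, so the dichotomy you describe between ``rank-$2$ hyperbolic type'' and ``the rank-$1$ point case'' needs to be reformulated before it can drive the classification.
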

Recall that a subspace $P$ of a Lie algebra $L$ is called a \emph{point
space }if $[P,P]=0$ and $ad_{x}^{2}L=Fx$ for every nonzero element
$x\in P$. Moreover, a point subspace $P$ of $\mathfrak{o}(V,\Phi)$
is said to be of \emph{Type 1} if there is a non-zero vector $u$
in the image of every non-zero $a\in P$. 
\begin{lem}
\cite[Lemma 1.13]{bib:Benk2}\label{Abelian} Let $L$ be a finite
dimensional simple Lie algebra and let $I$ be a proper inner ideal
of $L$. Then $[I,I]=0$, i.e. $I$ is abelian. 
\end{lem}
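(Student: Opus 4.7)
The plan is to show that every element of $I$ is $\ad$-nilpotent, and then to derive $[I,I]=0$ by applying Jacobson--Morozov to a putative non-zero element of $[I,I]$.

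First, I would strengthen the closure property. The inner ideal condition $[I,[I,L]]\subseteq I$ combined with Jacobi, $[[x,y],u] = [x,[y,u]] - [y,[x,u]]$ for $x,y\in I$ and $u\in L$, gives $[[I,I],L]\subseteq I$, and hence also $[[I,I],I]\subseteq I$. In particular, for every $z\in[I,I]$ the operator $\ad z$ sends $L$ into $I$ and preserves $I$.

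Next, I would show that every $x\in I$ is $\ad$-nilpotent. Let $x=x_s+x_n$ be the Jordan decomposition in $L$. Since $\ad x_s$ is a polynomial in $\ad x$ without constant term and $(\ad x)^k L\subseteq I$ for all $k\ge 2$, we have $\ad x_s(L)\subseteq I$. Suppose $x_s\ne 0$ and fix a Cartan subalgebra $H\ni x_s$ with root decomposition $L = H\oplus\bigoplus_\alpha L_\alpha$. Then $L_\alpha\subseteq I$ for every root $\alpha$ with $\alpha(x_s)\ne 0$. For each such $\alpha$, choose $e_\alpha\in L_\alpha$, $f_\alpha\in L_{-\alpha}$ and $h\in H$ with $\alpha(h)\ne 0$; the inner ideal condition then yields $[e_\alpha,[f_\alpha,h]] = -\alpha(h)\,[e_\alpha,f_\alpha]\in Fh_\alpha$, whence $h_\alpha\in I$. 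Because the root system of the simple algebra $L$ is irreducible, the set $\{h_\alpha:\alpha(x_s)\ne 0\}$ spans $H$, so $H\subseteq I$. Applying the inner ideal condition once more, for any root $\beta$ and any $h_1,h_2\in H$ with $\beta(h_1),\beta(h_2)\ne 0$ we get $[h_1,[h_2,e_\beta]] = \beta(h_1)\beta(h_2)\,e_\beta\in I$, forcing $L_\beta\subseteq I$ for every $\beta$. Thus $I=L$, contradicting properness. Hence $x_s=0$.

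Finally, suppose for contradiction $[I,I]\ne 0$ and pick a non-zero $z\in[I,I]$. The same argument, now using $\ad z(L)\subseteq I$ from the first step in place of $(\ad x)^2 L\subseteq I$, shows that $z$ is $\ad$-nilpotent. By Jacobson--Morozov, $z$ extends to an $\mathfrak{sl}_2$-triple $(z,h,f)$ in $L$. Then $h=[z,f]\in \ad z(L)\subseteq I$, but $h$ is $\ad$-semisimple with non-zero eigenvalues, contradicting the $\ad$-nilpotency of elements of $I$ established above. Therefore $[I,I]=0$.

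The main obstacle is the root-system combinatorics in the second paragraph: verifying that the Cartan elements $h_\alpha$ produced from roots $\alpha$ non-vanishing on $x_s$ actually span all of $H$, and then that the inner ideal condition propagates the inclusion to every root space. This part rests essentially on the irreducibility of the root system of the simple Lie algebra $L$.
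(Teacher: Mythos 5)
The paper does not actually prove this lemma --- it is quoted from Benkart \cite[Lemma 1.13]{bib:Benk2} --- so there is no in-text argument to compare against; I am assessing your proof on its own. Your overall strategy is sound and is essentially the classical one: show every element of $I$ is $\ad$-nilpotent, then use $[[I,I],L]\subseteq I$ (your Jacobi computation for this is correct) together with Jacobson--Morozov to place the semisimple element $h$ of an $\mathfrak{sl}_2$-triple through $z\in[I,I]$ inside $I$, a contradiction. The root-system fact you flag as the main obstacle is indeed true: if some root $\gamma$ with $\gamma(x_s)=0$ lay outside the span $W$ of the roots not vanishing on $x_s$, it would have to be orthogonal to every such root $\beta$ (otherwise $s_\beta(\gamma)=\gamma-\langle\gamma,\beta^\vee\rangle\beta$ would again be a root not vanishing on $x_s$, forcing $\gamma\in W$), which would split the root system orthogonally, contradicting irreducibility; hence those roots span $H^*$ and their coroots span $H$.

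The one step whose justification does not hold up as written is ``$\ad x_s(L)\subseteq I$''. It is true that $\ad x_s=p(\ad x)$ for a polynomial $p$ with $p(0)=0$, and it is true that $(\ad x)^kL=(\ad x)^2\bigl((\ad x)^{k-2}L\bigr)\subseteq(\ad x)^2L\subseteq I$ for all $k\ge2$; but $p$ may have a nonzero linear coefficient (for instance when $x$ is semisimple, where $\ad x_s=\ad x$), and $(\ad x)L=[x,L]$ is not contained in $I$ in general, so the stated premises do not imply the conclusion. The conclusion you need at that point is nevertheless true and follows from $(\ad x)^2L\subseteq I$ alone: for each nonzero eigenvalue $\lambda$ of $\ad x_s$, the generalized eigenspace $L^{\lambda}$ of $\ad x$ satisfies $L^{\lambda}=(\ad x)^2L^{\lambda}\subseteq I$ because $\ad x$ is invertible there, and $L_\alpha\subseteq L^{\alpha(x_s)}$ for every root $\alpha$, which gives exactly $L_\alpha\subseteq I$ whenever $\alpha(x_s)\neq0$. (For $z\in[I,I]$ your argument is already complete as written, since there $(\ad z)^kL\subseteq(\ad z)L\subseteq I$ for all $k\ge1$.) With that repair the proof is correct.
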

The following two facts are well known, see for example \cite[Proposition 2.3]{bib:Artinian}.
\begin{lem}
\label{InnerDecompLemma} Let $L$ be a finite dimensional simple
Lie algebra and let $I$ be an inner ideal of $L$. Then $[I,[I,L]]=I$. \end{lem}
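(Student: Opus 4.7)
The plan is to establish the two inclusions $[I,[I,L]]\subseteq I$ and $I\subseteq[I,[I,L]]$ separately. The first is precisely the definition of an inner ideal. The extreme cases are immediate: if $I=0$ there is nothing to prove, while if $I=L$ then $L$ is perfect by simplicity, so $[L,[L,L]]=[L,L]=L$. So I would assume from now on that $I$ is a proper non-zero inner ideal.

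The crucial observation is that every element of $I$ is $\ad$-nilpotent of small order. Indeed, by Lemma \ref{Abelian} we have $[I,I]=0$, so combining abelianness of $I$ with the inner ideal property gives
\[
\ad_x^3(L)=[x,\ad_x^2(L)]\subseteq[x,I]\subseteq[I,I]=0
\]
for every $x\in I$, i.e. $\ad_x^3=0$.

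Given this nilpotency, I would invoke the Jacobson--Morozov theorem (valid since $F$ has characteristic zero): for any non-zero $x\in I$ there exist $h,f\in L$ such that $(x,h,f)$ is an $\mathfrak{sl}_2$-triple, i.e. $[h,x]=2x$, $[h,f]=-2f$, $[x,f]=h$. A one-line computation,
\[
[x,[x,f]]=[x,h]=-2x,
\]
exhibits $x$ as an element of $[I,[I,L]]$. Since $x\in I$ was arbitrary, this yields $I\subseteq[I,[I,L]]$, completing the proof.

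The main obstacle, as I see it, is spotting the right structural tool rather than any hard calculation: once one notices that abelianness combined with the inner ideal property forces $\ad_x^3=0$ on the nose, Jacobson--Morozov supplies an explicit $\mathfrak{sl}_2$-triple inside $L$ and the reverse inclusion drops out of a single bracket. An alternative would be to appeal to the classification of inner ideals in Theorem \ref{ClassInnerFD} and verify the identity on each type, or to use the root space decomposition of $L$ relative to a Cartan subalgebra containing $h$, but the $\mathfrak{sl}_2$-triple argument is considerably cleaner.
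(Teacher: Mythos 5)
Your proof is correct and follows essentially the same route as the paper: use Lemma \ref{Abelian} to get $[I,I]=0$, deduce $\ad_x^3=0$ for $x\in I$, and apply Jacobson--Morozov to produce an $\mathfrak{sl}_2$-triple exhibiting $x=-\tfrac{1}{2}[x,[x,f]]\in[I,[I,L]]$. The only difference is cosmetic (you handle the cases $I=0$ and $I=L$ slightly more explicitly).
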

\begin{proof}
Let $I$ be an inner ideal of $L$. If $I=L$ then this is obviously
true. Assume that $I$ is proper. Then by Lemma \ref{Abelian}, $I$
is abelian. Let $x\in I$. Then $[x,[x,[x,L]]]\subseteq[x,I]=0$,
so $x$ is ad-nilpotent. By the Jacobson-Morozov Theorem, there exist
$y,h\in L$ such that $\{x,y,h\}$ form an $\mathfrak{sl}_{2}$-triple.
Note that $[x,[x,y]]=[x,h]=-2x$, so $x\in[I,[I,L]]$. This implies
$I=[I,[I,L]]$, as required. \end{proof}
\begin{lem}
\label{InnerDecomposition} Let $L$ be a finite
 dimensional semisimple
Lie algebra. Let $Q_{1},\dots,Q_{n}$ be the simple components of
$L$. Let $I$ be an inner ideal of $L$ and $I_{i}=I\cap Q_{i}$.
Then $I=I_{1}\oplus\dots\oplus I_{n}.$\end{lem}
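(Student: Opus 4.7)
The inclusion $I_1\oplus\cdots\oplus I_n\subseteq I$ is immediate, and directness of the sum comes for free from the ambient decomposition $L=Q_1\oplus\cdots\oplus Q_n$. Thus the content of the lemma is to show, for every $x\in I$ written as $x=x_1+\cdots+x_n$ with $x_i\in Q_i$, that each $x_i$ already lies in $I$. The plan is to use the projections $\pi_i\colon L\to Q_i$ in combination with the two preceding lemmas.

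For each $i$, set $J_i=\bigoplus_{j\ne i}Q_j$, so that $J_i$ is an ideal of $L$ and $L/J_i\cong Q_i$ via $\pi_i$. By Lemma \ref{Inner0}(2), $(I+J_i)/J_i$ is an inner ideal of $L/J_i$, which under the isomorphism becomes $\pi_i(I)$. So $\pi_i(I)$ is an inner ideal of the simple Lie algebra $Q_i$, and Lemma \ref{InnerDecompLemma} then gives $\pi_i(I)=[\pi_i(I),[\pi_i(I),Q_i]]$.

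The crucial step is to lift this equality back to $I$, namely to verify $\pi_i(I)\subseteq I_i$. Take any $y\in\pi_i(I)$ and expand $y=\sum_k[a_k,[b_k,c_k]]$ with $a_k,b_k\in\pi_i(I)$ and $c_k\in Q_i$. Choose preimages $\tilde a_k,\tilde b_k\in I$ with $\pi_i(\tilde a_k)=a_k$ and $\pi_i(\tilde b_k)=b_k$. Writing $\tilde a_k$ and $\tilde b_k$ in components and using $[Q_i,Q_j]=0$ for $j\ne i$, only the $Q_i$-components contribute when bracketing against $c_k\in Q_i$, so $[\tilde a_k,[\tilde b_k,c_k]]=[a_k,[b_k,c_k]]$. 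The left-hand side lies in $I$ by the inner ideal property of $I$ in $L$, while the right-hand side lies in $Q_i$; hence each summand belongs to $I\cap Q_i=I_i$, and so $y\in I_i$. This gives $\pi_i(I)\subseteq I_i$ (and trivially the reverse), so $\pi_i(I)=I_i$. Then for arbitrary $x\in I$ we have $x=\sum_i\pi_i(x)\in I_1\oplus\cdots\oplus I_n$, completing the proof.

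The only subtle point is the lifting in the third paragraph: a priori $\pi_i(I)$ need not equal $I_i$, and one must exhibit its elements inside $I$ explicitly. This is where Lemma \ref{InnerDecompLemma} does the real work, since it turns mere membership in $\pi_i(I)$ into a concrete triple bracket expression that can be pulled back through $\pi_i$ while staying inside $Q_i$ thanks to the commutation $[Q_i,Q_j]=0$.
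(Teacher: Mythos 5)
Your proof is correct and takes essentially the same route as the paper's: project onto each simple component, apply Lemma \ref{Inner0} and Lemma \ref{InnerDecompLemma}, and observe that $[\pi_i(I),[\pi_i(I),Q_i]]=[I,[I,Q_i]]\subseteq I\cap Q_i$. The paper states this last identity in a single line; your third paragraph merely spells out the lifting of preimages that justifies it.
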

\begin{proof}
Let $\psi_{k}:L\rightarrow Q_{k}$, $\psi_{k}((q_{1},\dots,q_{n}))=q_{k}$,
be the natural projection and let $J_{k}=\psi_{k}(I)$. We need to
show $J_{k}=I_{k}$. Indeed, by Lemma \ref{Inner0}, $J_{k}$ is an
inner ideal of $Q_{k}$. It is clear that $I_{k}\subseteq J_{k}$.
On the other hand, by Lemma \ref{InnerDecompLemma}, 
\[
J_{k}=[J_{k},[J_{k},Q_{k}]]=[I,[I,Q_{k}]]\subseteq I_{k}.
\]
Therefore $I_{k}=J_{k}$ for all $k$, so $I=I_{1}\oplus\dots\oplus I_{n}$. \end{proof}
\begin{prop}
\label{InnerPerfect} Let $L$ be a perfect finite dimensional Lie
algebra and let $I$ be an inner ideal of $L$. Assume that $(I+M)/M=L/M$
for every maximal ideal $M$ of $L$. Then $I=L$
. \end{prop}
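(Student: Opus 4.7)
My plan is to induct on $\dim L$. For the base case $R=\Rad L = 0$, the algebra $L$ is semisimple with $L=Q_1\oplus\cdots\oplus Q_n$, so Lemma \ref{InnerDecomposition} yields $I=\bigoplus_i(I\cap Q_i)$. The maximal ideals are $M_i=\bigoplus_{j\neq i}Q_j$, and the hypothesis $(I+M_i)/M_i=Q_i$ translates immediately to $I\cap Q_i=Q_i$ for each $i$, whence $I=L$.

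For the inductive step with $R\neq 0$, I would pick a minimal $L$-ideal $N$ contained in $R$. A standard Jacobi computation shows $[N,N]$ is an $L$-ideal inside $N$, so by minimality it equals $0$ or $N$; the latter would make $N$ perfect, contradicting $N\subseteq R$. Hence $N$ is abelian. The quotient $L/N$ is perfect, every maximal ideal of $L$ contains $N$ (since each contains $R$), and $(I+N)/N$ is an inner ideal of $L/N$ satisfying the analogous hypothesis by Lemma \ref{Inner0}. Induction gives $(I+N)/N = L/N$, i.e.\ $I+N=L$.

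It remains to deduce $N\subseteq I$, and here the minimality of $N$ gives a clean dichotomy: the $L$-ideal $[L,N]\subseteq N$ is either $0$ or $N$. If $[L,N]=N$, then $L=I+N$ together with $[N,N]=0$ forces $[I,N]=[L,N]=N$, and consequently $N=[I,[I,N]]\subseteq [I,[I,L]]\subseteq I$ by the inner ideal property, so $N\subseteq I$. If instead $[L,N]=0$, then $N$ is central, and perfectness collapses to $L=[L,L]=[I+N,I+N]=[I,I]$. Substituting this identity into the inner ideal property gives $[I,[I,I]]\subseteq[I,[I,L]]\subseteq I$, hence $[I,L]=[I,[I,I]]\subseteq I$, and therefore $L=[I,I]\subseteq[I,L]\subseteq I$. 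Either way, $I=L$. The main subtlety is the central case: one has to notice that perfectness lets one replace $L$ by $[I,I]$ so that the inner ideal closure $[I,[I,L]]\subseteq I$ actually absorbs $L$ itself.
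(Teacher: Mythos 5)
Your proof is correct, but it takes a genuinely different route from the paper's. The paper avoids induction altogether: it takes $I$ minimal among inner ideals satisfying the hypothesis, observes (via Benkart's lemma that $J^{[3]}=[J,[J,J]]$ is again an inner ideal) that $I^{[3]}$ satisfies the same hypothesis and sits inside $I$, hence $I^{[3]}=I$ by minimality; a Jacobi computation then gives $[L,I]=[L,[I,[I,I]]]\subseteq[I,[I,L]]\subseteq I$, so $I$ is an honest ideal not contained in any maximal ideal, whence $I=L$. You instead induct on $\dim L$, peel off a minimal abelian ideal $N\subseteq\Rad L$, get $I+N=L$ from the inductive hypothesis (legitimately, since every maximal ideal contains $\Rad L$), and then absorb $N$ into $I$ via the dichotomy $[L,N]\in\{0,N\}$ --- your handling of the central case, where perfectness gives $L=[I,I]$ and the inner ideal condition then swallows all of $L$, is the right observation and is correctly executed. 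The trade-off: the paper's argument is shorter and needs only the $J^{[3]}$ lemma, while yours is more elementary in that it avoids that lemma, at the cost of being longer and of invoking Lemma \ref{InnerDecomposition} (and hence Jacobson--Morozov through Lemma \ref{InnerDecompLemma}) for the semisimple base case. Both arguments are sound.
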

\begin{proof}
Without loss of generality we can assume that $I$ is minimal among all inner
ideals of $L$ satisfying this assumption. By \cite[Lemma 1.1(4)]{bib:Benk2},
for every inner ideal $J$ of $L$ the subspace $J^{[3]}=[J,[J,J]]$
is also an inner ideal of $L$. Note that $I^{[3]}$ satisfies the
assumption of the proposition (as $L/M$ is a simple Lie algebra by
Lemma \ref{MaxIdeals}) and is contained in $I$: $I^{[3]}\subseteq[I,[I,L]]\subseteq I$.
Therefore $I^{[3]}=I$. Now 
\[
[L,I]=[L,[I,[I,I]]]\subseteq[I,[I,L]]\subseteq I
\]
so $I$ is an ideal of $L$. Since $I$ is not contained in any maximal
ideal, $I=L$, as required.\end{proof}
\begin{lem}
\label{Inner1}\cite[Lemma 4.23]{bib:Benk1} Let $L$ be a classical
simple finite dimensional Lie algebra and let $V$ be the natural
module for $L$. Let $I$ be a proper inner ideal of $L$. Then $I^{3}V=0$.
In particular, $x^{3}V=0$ for all $x\in I$.\end{lem}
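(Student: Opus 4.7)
The plan is to apply Theorem \ref{ClassInnerFD}, which classifies the proper inner ideals of the three classical simple finite-dimensional Lie algebras, and to verify the conclusion case by case. In each case I will in fact prove the stronger associative identity $I^3 = 0$ in $A = \en V$, which immediately yields $I^3 V = 0$ and, as a special instance, $x^3 V = 0$ for every $x \in I$.

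Cases (1), (2), and (3)(i) of Theorem \ref{ClassInnerFD} are handled uniformly by a short associative expansion. In case (1), $L = \mathfrak{sl}(V)$ and $I = eAf$ with $fe = 0$, so $I^2 = eAf \cdot eAf = eA(fe)Af = 0$. In cases (2) and (3)(i), $I = eLe^*$ with $e^*e = 0$, and since $I \subseteq eAe^*$ the same expansion gives $I^2 \subseteq eA(e^*e)Ae^* = 0$.

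For case (3)(ii), Theorem \ref{ClassInnerFD}(3)(ii) provides a basis in which $I$ is the span of the matrix units $e_{1j} - e_{j2}$, $j \ge 3$. A direct multiplication via $e_{ab} e_{cd} = \delta_{bc} e_{ad}$ gives $(e_{1j} - e_{j2})(e_{1k} - e_{k2}) = -\delta_{jk} e_{12}$, so $I^2 \subseteq F e_{12}$; then $e_{12}(e_{1l} - e_{l2}) = 0$ yields $I^3 = 0$. For case (3)(iii), $I$ is a Type 1 point space of dimension $>1$, so by hypothesis there is a common non-zero vector $u$ lying in the image of every non-zero $a \in I$. The structural step is to show that each $a \in I$ has the form $a(w) = \Phi(v_a, w) u - \Phi(u, w) v_a$ for some $v_a$ in a totally isotropic subspace $W$ containing $u$; the skew-symmetry of $a$ with respect to $\Phi$ combined with the Type 1 condition forces this description. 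Once this is in hand, total isotropy $\Phi(W, W) = 0$ gives $a(u) = a(v_b) = 0$ for all $a, b \in I$, and therefore $ab = 0$ on all of $V$, so $I^2 = 0$.

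The principal obstacle is case (3)(iii): the abstract definition of a point space refers to the adjoint action on $L$, whereas the conclusion concerns the natural action on $V$, so one must translate the Type 1 hypothesis into an explicit description of the elements as operators on $V$. After that identification, the remaining verifications are routine matrix computations.
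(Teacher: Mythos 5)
Your handling of cases (1), (2), (3)(i) and (3)(ii) coincides exactly with the paper's proof: $fe=0$ or $e^{*}e=0$ gives $I^{2}=0$ in the first three, and the matrix-unit computation gives $I^{2}\subseteq Fe_{12}$ and $I^{3}=0$ in (3)(ii). The only divergence is case (3)(iii), where the paper simply cites \cite[Proposition 4.3]{bib:Benk3} to place a Type 1 point space inside $eLe^{*}$ with $e^{*}e=0$ (so $I^{2}=0$), whereas you sketch a direct argument. Your normal form is the right one, but the justification you offer for the ``structural step'' is not adequate as stated: skew-symmetry together with the Type 1 condition (a common vector $u$ in the image of every non-zero $a\in I$) does not force $a$ to have rank $2$, let alone the extremal form $a(w)=\Phi(v_{a},w)u-\Phi(u,w)v_{a}$ with $Fu+Fv_{a}$ totally isotropic; what forces this is the other half of the definition of a point space, namely $\ad_{a}^{2}L=Fa$, which characterizes $a$ as a long-root element of $\mathfrak{o}(V,\Phi)$, the Type 1 condition then only making $u$ common to all $a$. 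Likewise, the total isotropy of the span $W=Fu+\sum_{a}Fv_{a}$, which you need to kill the cross terms $\Phi(v_{a},v_{b})$, does not follow from $[I,I]=0$ (a direct computation shows $[a,b]=0$ holds automatically for $a,b$ of this form when $\Phi$ is symmetric); it follows from applying $\ad_{x}^{2}L=Fx$ to the combinations $x=a+b$. If you supply these two points your argument closes; note also that even without total isotropy of $W$ one gets $I^{2}\subseteq F\,(w\mapsto\Phi(u,w)u)$ and $a(u)=0$ for all $a\in I$, hence $I^{3}=0$, which is all the lemma requires. As written, however, the key step in (3)(iii) is asserted on insufficient grounds.
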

\begin{proof}
This was proved in \cite{bib:Benk1} but also follows from the classification
of inner ideals given in Theorem \ref{ClassInnerFD}. Indeed, referring
to the notation of the theorem, suppose $I=eAf$ or $I=eLe^{\ast}$
as in cases 1, 2 and 3 part (i). Then $fe=0$ or $e^{\ast}e=0$, so
$I^{2}=0$. Now suppose $I=[v,H^{\perp}]$ as in case 3 part (ii).
Then $I$ is the $F$-span of the matrix units $e_{1j}-e_{j2}$, $j\ge3$.
Note that $I^{2}=Fe_{12}$ and $I^{3}=0$. Finally consider 
part (iii) of case 3 of Theorem \ref{ClassInnerFD}. If $I$ is a point space of type 1 then $I$ is a subspace
of $eLe^{*}$ for some idempotent $e$ with $e^{*}e=0$ (see \cite[Proposition 4.3]{bib:Benk3}).
Thus again $I^{2}=0$. 
\end{proof}

\section{Non-diagonal locally finite Lie algebras }

\label{sec:non-diag} The aim of this section is to prove Theorem
\ref{Mainresult}: a simple locally finite Lie algebra over $F$ has
a proper nonzero inner ideal if and only if it is diagonal. First
we are going to show that every simple diagonal locally finite Lie
algebra has a non-zero proper inner ideal. This will be generalized
in the next section were we describe all regular inner ideals of diagonal
Lie algebras. 
\begin{prop}
\label{Prop1} Every simple diagonal locally finite Lie algebra has
a proper non-zero inner ideal. \end{prop}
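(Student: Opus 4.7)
The plan is to mimic the finite-dimensional classification in Theorem~\ref{ClassInnerFD}, constructing the inner ideal inside the associative enveloping algebra $A = A(L)$ from the introduction. Either $A$ is simple plain with $L = [A,A]$, or $A$ is involution simple with $L = \mathfrak{su}^{*}(A)$; in both situations $A$ is infinite dimensional and locally finite, and by the remark following Proposition~\ref{conical} admits a conical perfect ($*$-invariant when relevant) local system $(A_{\alpha})$. Choosing $\alpha$ large enough and applying Lemma~\ref{radical}, I may assume $A_{\alpha}$ is semisimple, hence a product of matrix algebras over $F$.

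Inside such an $A_{\alpha}$ I would locate the required idempotents. In the plain case, two orthogonal nonzero idempotents $e, f \in A_{\alpha}$ with $ef = fe = 0$ (say, the matrix units $e_{11}$ and $e_{22}$ in a matrix factor); in the involution case, a nonzero idempotent $e \in A_{\alpha}$ with $e^{*}e = 0$, obtained from a hyperbolic pair in the natural module of $A_{\alpha}$. I then set $I := eAf$ in the plain case and $I := eLe^{*}$ in the involution case.

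The required properties are verified directly. In the plain case, $fe = 0$ gives $eaf = [ea, f] \in [A,A] = L$, so $I \subseteq L$; and for $a, b \in A$, $z \in L$ one computes
\[
[eaf,\,[ebf,\,z]] = -(eaf)\,z\,(ebf) - (ebf)\,z\,(eaf) \in eAf,
\]
the other four terms being killed by $ef = fe = 0$. In the involution case, writing $I = [U,U]$ with $U = eV$ a totally isotropic subspace of a natural $A_{\alpha}$-module realises each generator $u^{*}v + v^{*}u$ inside a finite-dimensional classical Lie subalgebra of $L$, so $I \subseteq L$; and $e^{*}e = 0$ yields an analogous calculation for the inner ideal condition. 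In both cases $I \ne 0$ by (involution) simplicity of $A$, while $I \ne L$ because $I$ squares to zero in $A$ (so $[I, I] = 0$), whereas $L$ is perfect and infinite dimensional.

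The main difficulty lies in the involution case: one must verify that $eLe^{*}$ is contained in $\mathfrak{su}^{*}(A)$ and not merely in $\mathfrak{u}^{*}(A)$, and produce an isotropic idempotent in $A$. Both issues can be handled by working inside the finite-dimensional classical Lie subalgebras $L_{\gamma} \subseteq L$ furnished by Proposition~\ref{diag_sys}, where $\mathfrak{u}^{*}(A_{\gamma}) = \mathfrak{su}^{*}(A_{\gamma}) = L_{\gamma}$ since finite-dimensional classical Lie algebras are perfect, and where hyperbolic decompositions exist automatically over the algebraically closed field $F$ of characteristic zero.
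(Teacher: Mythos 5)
Your plain case is sound: the identity $eaf=[ea,f]$ (valid because $fe=0$) gives $eAf\subseteq[A,A]=L$ directly, and the bracket computation is correct. The problem is the involution case, which is unavoidable since not every simple diagonal Lie algebra is plain. You set $I=eLe^{*}$ and must show $I\subseteq L=\mathfrak{su}^{*}(A)$, which you correctly flag as the main difficulty --- but your proposed resolution does not close it. Identifying $I$ with $[U,U]$ for $U=eV$ a totally isotropic subspace of a natural $A_{\alpha}$-module only accounts for $e\,\mathfrak{su}^{*}(A_{\alpha})\,e^{*}$; the space $eLe^{*}$ is built from all of $L$ and is in general infinite dimensional, so to run your argument you would need $eL_{\gamma}e^{*}\subseteq L_{\gamma}$ at \emph{every} level $\gamma$ of a local system. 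But the members of a perfect local system of $A$ are in general only perfect, not semisimple ($A$ need not be locally semisimple --- the paper explicitly records such examples), so Theorem \ref{ClassInnerFD} is not available there; and your opening claim that Lemma \ref{radical} lets you ``assume $A_{\alpha}$ is semisimple'' is a misreading of that lemma, which only gives $\Rad L_{\beta}\cap L_{\alpha}=0$ for large $\beta$. A secondary slip: $\mathfrak{u}^{*}(A_{\gamma})=\mathfrak{su}^{*}(A_{\gamma})$ fails when $A_{\gamma}$ is a sum of two matrix algebras with the exchange involution ($\mathfrak{gl}_{n}$ versus $\mathfrak{sl}_{n}$).

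The paper sidesteps exactly this containment problem. It treats all diagonal algebras uniformly via the involution simple $\mathfrak{P}^{*}$-envelope $A$, uses \cite[Corollary 2.11]{bib:BBZ} (not Lemma \ref{radical}) to produce a single finite dimensional \emph{involution simple} subalgebra $A_{1}$ with an idempotent $e$ satisfying $e^{*}e=0$ and $eL_{1}e^{*}\ne0$, and then defines $I=eAe^{*}\cap\mathfrak{su}^{*}(A)$. With this definition $I\subseteq L$ is automatic; $I\ne0$ because $eL_{1}e^{*}$ is a nonzero inner ideal of the classical algebra $L_{1}=\mathfrak{su}^{*}(A_{1})$ by Theorem \ref{ClassInnerFD} and hence lies in both $eAe^{*}$ and $L$; and $(eAe^{*})A(eAe^{*})\subseteq eAe^{*}$ together with the fact that $[I,[I,L]]\subseteq L$ gives the inner ideal property. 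I recommend you either adopt this intersection trick or supply a genuine proof that $e\,\mathfrak{u}^{*}(A)\,e^{*}\subseteq\mathfrak{su}^{*}(A)$; as written, that step is a gap.
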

\begin{proof}
Let $L$ be a simple diagonal locally finite Lie algebra. By \cite[Theorems 1.1 and 1.2]{bib:BBZ}
there exists an involution simple locally finite associative algebra
$A$ such that $L=\mathfrak{su}^{\ast}(A)$. By \cite[Corollary 2.11]{bib:BBZ},
for every integer $m$, $A$ contains an involution simple finite
dimensional subalgebra $A_{1}$ of dimension greater than $m$. It
is well known $A_{1}$ is isomorphic to a matrix algebra $M_{n}(F)$
with orthogonal or symplectic involution or the direct sum of two
copies of $M_{n}(F)$ with involution permuting the components and
$L_{1}=\mathfrak{su}^{\ast}(A_{1})$ is a finite dimensional classical
Lie algebra isomorphic to $\mathfrak{sl}_{n}$, $\mathfrak{sp}_{n}$,
or $\mathfrak{o}_{n}$ (see for example \cite[Lemmas 2.1 and 2.2]{bib:BZ2}).
Fix any idempotent $e$ in $A_{1}$ such that $e^{\ast}e=0$ and $eL_{1}e^{*}\ne0$
(see Theorem \ref{ClassInnerFD}). Put $I=eAe^{\ast}\cap\mathfrak{su}^{\ast}(A)$.
Note that $I^{2}=0$, so $I$ is a proper non-zero subspace of $L$.
Since $(eAe^{*})A(eAe^{*})\subseteq eAe^{*}$, one has $[I,[I,L]]\subseteq I$,
so $I$ is an inner ideal of $L$. \end{proof}
\begin{rem}
As it was mentioned to us by Antonio Fern\'{a}ndez L\'{o}pez, the proposition also
follows from \cite[Corollary 2.3]{bib:FLYG}, because every simple
diagonal locally finite Lie algebra $L$ has an algebraic adjoint
representation \cite[Corollary 3.9(6)]{bib:Bar5}, and hence a non-zero abelian inner ideal. \end{rem}
\begin{lem}
\label{Lemma A} Let $L$ be a non-diagonal simple locally finite
Lie algebra. Let $(L_{\alpha})_{\alpha\in\Gamma}$ be a conical local
system of $L$ of rank $>10$. Then for every $\beta$ there exists
$\beta'\ge\beta$ such that for all $\gamma\ge\beta'$ the embedding
$L_{\beta}\subseteq L_{\gamma}$ is non-diagonal. \end{lem}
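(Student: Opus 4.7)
The plan is to argue by contradiction. Suppose the conclusion fails for some $\beta \in \Gamma$; then the set $\Delta=\{\gamma \in \Gamma \mid \gamma \ge \beta \text{ and } L_\beta \subseteq L_\gamma \text{ is diagonal}\}$ is cofinal in $\{\alpha \ge \beta\}$ and hence in $\Gamma$. Consequently $(L_\gamma)_{\gamma\in\Delta}$ is itself a perfect local system for $L$. I intend to show that every embedding $L_{\gamma_1}\subseteq L_{\gamma_2}$ with $\gamma_1\le\gamma_2$ in $\Delta$ is diagonal; this would make $(L_\gamma)_{\gamma\in\Delta}$ a diagonal local system and contradict the non-diagonality of $L$.

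To check diagonality of $L_{\gamma_1}\subseteq L_{\gamma_2}$, I would apply the ``moreover'' clause of Lemma \ref{BBZ2.5} to the chain $L_\beta\subseteq L_{\gamma_1}\subseteq L_{\gamma_2}$. There are three hypotheses to verify: (i) the ranks of $L_\beta$ and $L_{\gamma_2}$ exceed $10$; (ii) the outer embedding $L_\beta\subseteq L_{\gamma_2}$ is diagonal; and (iii) every natural $L_{\gamma_1}$-module $V$ restricts to $L_\beta$ with a non-trivial composition factor. Item (i) is immediate from the remark after Definition \ref{defcon}: since the conical system has rank $>10$, every simple component of a Levi subalgebra of $L_\alpha$ has rank $\ge \rk L_1>10$. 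Item (ii) is the defining property of $\Delta$.

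The only real step is (iii), which I would handle as follows. By conicality (Definition \ref{defcon}(3)) applied to $L_{\gamma_1}$, the restriction $V\downarrow L_1$ has a non-trivial composition factor; since $L_1\subseteq L_\beta$, any composition series of $V\downarrow L_\beta$ refines to a composition series of $V\downarrow L_1$, so if every $L_\beta$-composition factor of $V$ were trivial then every $L_1$-composition factor would also be trivial, contradicting conicality. This yields a non-trivial $L_\beta$-composition factor of $V$, giving (iii).

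With (i)--(iii) in hand, Lemma \ref{BBZ2.5} yields that $L_{\gamma_1}\subseteq L_{\gamma_2}$ is diagonal for arbitrary $\gamma_1\le\gamma_2$ in $\Delta$, and the contradiction follows. I expect the composition-factor bookkeeping in step (iii) to be the only mildly delicate point; the rest is essentially unpacking the definition of a conical local system and invoking Lemma \ref{BBZ2.5}.
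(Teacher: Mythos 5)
Your argument is correct, and it is organised differently from the paper's. The paper first uses the first clause of Lemma \ref{BBZ2.5} to upgrade the contradiction hypothesis to ``$L_{\beta}\subseteq L_{\beta'}$ is diagonal for \emph{every} $\beta'\ge\beta$'', then fixes a simple component $Q$ of a Levi subalgebra of $L_{\beta}$, invokes Corollary \ref{SimpleCrit} to build the auxiliary conical system $\{Q,L_{\gamma}\mid\gamma\ge\alpha'\}$, and applies the ``moreover'' clause of Lemma \ref{BBZ2.5} to triples $Q\subseteq L_{\zeta}\subseteq L_{\xi}$. You instead keep only the cofinal index set $\Delta$ on which $L_{\beta}\subseteq L_{\gamma}$ is diagonal by definition, apply the ``moreover'' clause directly to $L_{\beta}\subseteq L_{\gamma_{1}}\subseteq L_{\gamma_{2}}$, and verify its non-triviality hypothesis by pulling property (3) of Definition \ref{defcon} from $L_{1}$ up to $L_{\beta}$ via refinement of composition series (valid since $L_{1}\subseteq L_{\beta}$ by conicality, and since for a perfect subalgebra ``non-trivial restriction'' is equivalent to ``has a non-trivial composition factor''). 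This removes the need for Corollary \ref{SimpleCrit} (hence for Theorem \ref{SimpleCriterion}) and for the auxiliary conical system; the rank hypothesis of Lemma \ref{BBZ2.5} is supplied, as you note, by the remark following Definition \ref{defcon}, and the passage to the cofinal subfamily $\Delta$ is harmless because a cofinal directed subfamily of a perfect local system is again a perfect local system. Both proofs hinge on the same key lemma, so the difference is organisational rather than conceptual, but your version is a genuine and self-contained simplification.
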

\begin{proof}
Let $\beta\in\Gamma$. Suppose to the contrary that for every $\beta'\ge\beta$
there is $\gamma\ge\beta'$ such that the embedding $L_{\beta}\subseteq L_{\gamma}$
is diagonal. Since $L_{\beta}\subseteq L_{\beta'}\subseteq L_{\gamma}$,
by Lemma \ref{BBZ2.5} the embedding $L_{\beta}\subseteq L_{\beta'}$
is diagonal for all $\beta'\ge\beta$. Fix any simple component $Q$
of a Levi subalgebra of $L_{\beta}$. Then $\rk Q>10$ and by Corollary
\ref{SimpleCrit}, there is $\alpha'>\beta$ such that $\mathfrak{L}=\{Q,L_{\gamma}\mid\gamma\ge\alpha'\}$
is a conical local system of $L$. We are going to prove that $\mathfrak{L}$
is a diagonal local system, so $L$ is a diagonal Lie algebra, which
is a contradiction. We already know that all embeddings $Q\subseteq L_{\gamma}$,
$\gamma\ge\alpha'$, are diagonal. Consider any $\xi>\zeta\ge\alpha'$.
Then we have a chain of embeddings $Q\subseteq L_{\zeta}\subseteq L_{\xi}$.
By construction both $Q\subseteq L_{\zeta}$ and $Q\subseteq L_{\xi}$
are diagonal. Since $\mathfrak{L}$ is conical and $\rk Q>10$, by
Lemma \ref{BBZ2.5} the embedding $L_{\zeta}\subseteq L_{\xi}$ is
diagonal, as required. \end{proof}
\begin{lem}
\cite[Lemma 4.5]{bib:Bar3} \label{Levi} Let $L_{1}\subset L_{2}$
be finite dimensional Lie algebras; let $S_{1}$ and $S_{2}$ be Levi
subalgebras of $L_{1}$ and $L_{2}$, respectively. Then there exists
an automorphism $\theta$ of $L_{2}$ such that $\theta(S_{1})\subseteq S_{2}$
and $\theta(l)=l+r(l)$ for all $l\in L_{2}$, with $r(l)$ being
in the nilpotent radical of $L_{2}$. Moreover the monomorphism $S_{1}\subseteq S_{2}$
induced by $\theta$ does not depend on the choice of such $\theta$.
\end{lem}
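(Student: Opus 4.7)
The plan is to derive this as a direct consequence of Mal'cev's theorems on Levi subalgebras of finite dimensional Lie algebras over a field of characteristic zero. Two ingredients are needed: \emph{(i)} every semisimple subalgebra of a finite dimensional Lie algebra is contained in some Levi subalgebra, and \emph{(ii)} any two Levi subalgebras are conjugate by an inner automorphism of the form $\exp(\ad n)$ with $n$ in the nilpotent radical. Write $N$ for the nilpotent radical of $L_2$.

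For existence, the first step is to note that $S_1 \subseteq L_1 \subseteq L_2$ is semisimple, so by \emph{(i)} it sits inside some Levi subalgebra $S$ of $L_2$. The next step is to apply \emph{(ii)} to find $n \in N$ such that $\theta := \exp(\ad n)$ carries $S$ onto $S_2$; then automatically $\theta(S_1) \subseteq \theta(S) = S_2$. Since $N$ is an ideal of $L_2$, every term beyond the first in the expansion $\exp(\ad n)(l) = l + [n,l] + \tfrac{1}{2}[n,[n,l]] + \cdots$ lies in $N$, so the correction $r(l) := \theta(l) - l$ lies in $N$, as required.

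For uniqueness of the induced embedding $S_1 \hookrightarrow S_2$, I would let $\pi \colon L_2 \to L_2/N$ be the quotient map. Given two automorphisms $\theta, \theta'$ satisfying the conclusion, the conditions $\theta(l) - l \in N$ and $\theta'(l) - l \in N$ yield $\pi\circ\theta = \pi = \pi\circ\theta'$. Since $S_2$ is semisimple while $N$ is a nilpotent ideal, $S_2 \cap N = 0$, so $\pi$ restricts to an injection on $S_2$. Both $\theta|_{S_1}$ and $\theta'|_{S_1}$ take values in $S_2$ and become equal after applying $\pi$, hence they coincide, giving the stated uniqueness.

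The only genuinely delicate point is to locate the right refined form of Mal'cev's theorem, namely the one producing the conjugating automorphism inside the group $\exp(\ad N)$ rather than as a generic inner automorphism. This refinement is exactly what forces $r(l)$ into the nilpotent radical (the stronger condition actually asserted), and it is also what makes the quotient by $N$ available in the uniqueness step. Once that precise form is quoted, existence and uniqueness are essentially formal.
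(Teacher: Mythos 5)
Your proof is correct: the existence argument via Mal'cev's theorems (every semisimple subalgebra of $L_2$ lies in some Levi subalgebra, and any two Levi subalgebras are conjugate under $\exp(\ad n)$ with $n$ in the nilpotent radical $N$, whence $\theta(l)-l\in N$ because $N$ is an ideal) and the uniqueness argument via the quotient $L_2/N$ together with $S_2\cap N=0$ are both sound. The paper gives no proof of this lemma, citing it directly from \cite[Lemma 4.5]{bib:Bar3}; your argument is the standard one underlying that reference, so there is nothing to compare beyond noting that you have supplied the expected details.
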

In what follows we will use the function $\delta$ introduced in \cite{bib:Bar3}.
This is a function defined on the weights (and modules) of simple
Lie algebras. The function $\delta$ takes integral values (and also half-integral
values in the case of algebras of type B). 
Let $L$ be a finite dimensional simple Lie algebra of rank
$m$. Denote by $\omega_{1},\dots,\omega_{m}$ the fundamental weights
of $L$ and by $\alpha_{1},\dots\alpha_{m}$ the simple roots of $L$.
The function $\delta$ is linear on weights and defined by its values on the
fundamental weights. 
In the following list $\delta(\omega_{i})=p_{i}$, $1\le i\le m$, 
is abbreviated to 
$\delta=(p_{1},\dots,p_{m})$. 
$$
\begin{array}{ll}
\delta = (1,2,\ldots,k,k,\ldots,2,1) & (A_{2k}); \\
\delta = (1,2,\ldots,k+1,\ldots,2,1) &    (A_{2k+1}); \\
\delta = (1,2,\ldots,m-2,m-1,m)  &   (C_{m}, m\ge2); \\
\delta = (1,2,\ldots,m-2,m-1,[\frac{m}{2}])  &  (B_{m}, m\ge3); \\
\delta = (1,2,\ldots,2k-2,k-1,k)  &   (D_{2k}, k\ge2); \\
\delta = (1,2,\ldots,2k-1,k,k) &    (D_{2k+1}, k\ge2); \\
\delta = (2,2,3,4,3,2)  &   (E_{6}); \\
\delta = (2,2,3,4,3,2,1) &    (E_{7}); \\
\delta = (4,5,7,10,8,6,4,2) &    (E_{8}); \\
\delta = (2,3,2,1)  &   (F_{4}); \\
\delta = (1,2) &    (G_{2}). 
\end{array}
$$
It is easy to verify that $\delta(\alpha_{i})\ge0$ for all $i=1,\dots, m$.
Let $V$ be a finite dimensional $L$-module and 
$M$ be its set of weights then set
$\delta(V)=\sup\{\delta(\mu)\}_{\mu\in M}$. 
Since the value of
$\delta$ on the simple roots is non-negative this implies that  $\delta(V)=\delta(\mu_{h})$
where $\mu_{h}$ is the highest weight of $V$. 
If rank of $L$ is greater than $10$ then the following holds.
The $L$-module $V$ is trivial if and only if $\delta(V)=0$; $V$ is non-trivial
diagonal if and only if $\delta(V)=1$; $V$ is non-diagonal if and
only if $\delta(V)\ge2$ (see \cite[Section 6]{bib:Bar3} for details). 
\begin{lem}
\label{Lemma B} Let $L_{1}\subseteq L_{2}\subseteq L_{3}$ be three
perfect finite dimensional Lie algebras such that $L_{1}$ is simple
and $\rk L_{1}>10$. Suppose that the embedding $L_{2}\subseteq L_{3}$
is non-diagonal and the restriction of every natural $L_{2}$-module
to $L_{1}$ is non-trivial. Then there is a natural $L_{3}$-module
$V$ such that $\delta(V\downarrow L_{1})>1$. In particular, the
restriction of $V$ to $L_{1}$ is non-diagonal.\end{lem}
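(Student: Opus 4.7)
The plan is to witness the non-diagonality of $L_2\subseteq L_3$ by choosing a natural $L_3$-module $V$ for which $V\downarrow L_2$ has a ``bad'' composition factor $W$, and then to control the highest $L_1$-weight of $W$ using the Levi decomposition of $L_2$ together with the hypothesis that every natural $L_2$-module restricts non-trivially to $L_1$.

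First I would fix a natural $L_3$-module $V$ such that $V\downarrow L_2$ is non-diagonal; by definition this supplies a composition factor $W$ of $V\downarrow L_2$ that is non-trivial and is neither a natural nor a co-natural $L_2$-module. Because $\Rad L_2$ annihilates every simple $L_2$-module, $W$ is irreducible for $L_2/\Rad L_2 = Q_1\oplus\dots\oplus Q_s$, so $W=W_1\otimes\dots\otimes W_s$ with each $W_i$ an irreducible $Q_i$-module. The hypothesis that each natural $L_2$-module restricts non-trivially to $L_1$ forces every projection $\pi_i\colon L_1\to Q_i$ to be non-zero; since $L_1$ is simple, $\pi_i$ is injective, whence $\rk Q_i\ge\rk L_1>10$ for every $i$.

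Viewing each $W_i$ as an $L_1$-module via $\pi_i$, we have $W\downarrow L_1 = \bigotimes_i(W_i\downarrow L_1)$, and the highest $L_1$-weight of this tensor product is the sum of the highest $L_1$-weights of its factors; by linearity of $\delta$ this yields $\delta_{L_1}(W\downarrow L_1)=\sum_i \delta_{L_1}(W_i\downarrow L_1)$. I would then split into two cases. If at least two of the $W_i$ are non-trivial, then since a non-trivial irreducible module of a simple Lie algebra is faithful and each $\pi_i$ is injective, the corresponding $W_i\downarrow L_1$ are non-trivial, each contributing $\ge 1$ to the sum (using $\rk L_1>10$), so $\delta_{L_1}(W\downarrow L_1)\ge 2$. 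If exactly one $W_i$, say $W_1$, is non-trivial, then $W_1$ is neither natural nor co-natural for $Q_1$, so $\delta_{Q_1}(W_1)\ge 2$; here I would invoke the fact from \cite[Section 6]{bib:Bar3} that $\delta$ does not decrease under restriction of a module along an embedding of simple Lie algebras of rank greater than $10$, obtaining $\delta_{L_1}(W_1\downarrow L_1)\ge 2$.

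Finally, since $W$ is a composition factor of $V\downarrow L_2$, the $L_1$-weights of $W$ are among those of $V$, so $\delta(V\downarrow L_1)\ge\delta_{L_1}(W\downarrow L_1)\ge 2>1$, which also shows that $V\downarrow L_1$ is non-diagonal. The main obstacle is the second case, which relies on the non-decreasing property of $\delta$ under restriction to a high-rank simple subalgebra; the rest of the argument amounts to bookkeeping of the Levi decomposition of $L_2$ together with the additive behaviour of $\delta$ on highest weights of tensor products.
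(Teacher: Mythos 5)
Your overall strategy is the same as the paper's: pick a natural $L_{3}$-module $V$ witnessing non-diagonality of $L_{2}\subseteq L_{3}$, write the bad composition factor as $W=W_{1}\otimes\dots\otimes W_{s}$ over the simple components of $L_{2}/\Rad L_{2}$, and split into the cases ``at least two $W_{i}$ non-trivial'' and ``one $W_{i}$ non-trivial but not natural or co-natural''. Your first case is fine (the additivity of $\delta$ over tensor factors is exactly what the paper cites \cite[Lemma 7.2]{bib:Bar3} for, and non-trivial $\Rightarrow\delta\ge1$ holds since $\rk L_{1}>10$). The gap is in your second case: the ``fact'' you invoke --- that $\delta$ does not decrease under restriction of a module along an embedding of simple Lie algebras of rank $>10$ --- is false, and is not what \cite[Section 6]{bib:Bar3} provides. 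A concrete counterexample: embed $L_{1}=\mathfrak{sl}_{13}$ (type $A_{12}$) diagonally into $Q_{1}=\mathfrak{sl}_{26}$ (type $A_{25}$) via $F^{26}\downarrow L_{1}=V\oplus V^{*}$, and take $M=\Lambda^{13}F^{26}$ with highest weight $\omega_{13}$, so $\delta_{Q_{1}}(M)=13$. Then $M\downarrow L_{1}=\bigoplus_{i}\Lambda^{i}V\otimes\Lambda^{13-i}V^{*}$, whose $\delta$-maximal weights are the $2\omega_{i}$ with $\delta(2\omega_{i})=2\min(i,13-i)\le12$; hence $\delta(M\downarrow L_{1})=12<13$. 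All your standing hypotheses (both ranks $>10$, natural $Q_{1}$-module restricting non-trivially) are satisfied here, so the principle cannot be rescued by those hypotheses, and your deduction $\delta_{Q_{1}}(W_{1})\ge2\Rightarrow\delta_{L_{1}}(W_{1}\downarrow L_{1})\ge2$ is left without justification.

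What the paper actually uses at this point is a \emph{comparative} monotonicity statement, \cite[Lemma 6.7]{bib:Bar3}: for a fixed embedding $L_{1}\subseteq S_{1}$ as in the hypotheses, if $W_{1}$ is a non-trivial $S_{1}$-module which is neither natural nor co-natural, then $\delta(W_{1}\downarrow L_{1})>\delta(V_{1}\downarrow L_{1})$, where $V_{1}$ is the natural $S_{1}$-module. Combined with $\delta(V_{1}\downarrow L_{1})\ge1$ (which follows from your correct observation that $V_{1}\downarrow L_{1}$ is non-trivial) this gives $\delta(W\downarrow L_{1})>1$ directly, with no need to pass through $\delta_{Q_{1}}(W_{1})$ at all. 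So the conclusion you want in case 2 is true, but you need to replace the absolute ``restriction does not decrease $\delta$'' by this relative comparison of $W_{1}$ against the natural module; as written, that step of your argument does not stand.
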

\begin{proof}
By using the Levi-Malcev Theorem and Lemma \ref{Levi} we can reduce this
to the case of Levi subalgebras, one embedded into the next, so we
can assume that the $L_{i}$ are semisimple. Since the embedding $L_{2}\subseteq L_{3}$
is non-diagonal, there exists a natural $L_{3}$-module, say, $V$
such that $V\downarrow L_{2}$ has an irreducible component $W$ which
is not trivial, natural, or co-natural. We have
\[
\delta(V\downarrow L_{1})=\delta((V\downarrow L_{2})\downarrow L_{1})\ge\delta(W\downarrow L_{1})
\]
It remains to show that $\delta(W\downarrow L_{1})>1$. The module
$W$ can be represented in the form $W=W_{1}\otimes\dots\otimes W_{k}$
where each $W_{i}$ is a non-trivial irreducible module for a simple
component $S_{i}$ of $L_{2}$. Then we have two cases: either at
least two $W_{i}$ are non-trivial or at least one $W_{i}$ is not trivial,
natural, or co-natural. For the first case, without loss of
generality we may assume that there are just two non-trivial $W_{i}$,
so that $W=W_{1}\otimes W_{2}$. Using \cite[Lemma 7.2]{bib:Bar3}
we see that 
\[
\delta(W\downarrow L_{1})\ge\delta((W\downarrow S_{1})\downarrow L_{1})+\delta((W\downarrow S_{2})\downarrow L_{1})\ge2.
\]
 In the second case we may assume that $W=W_{1}$ where $W_{1}$ is
a non-trivial, non-natural and non-conatural $S_{1}$-module. Then
using \cite[Lemma 6.7]{bib:Bar3}, we get 
\[
\delta(W\downarrow L_{1})\ge\delta((W\downarrow S_{1})\downarrow L_{1})\ge\delta(W_{1}\downarrow L_{1})>\delta(V_{1}\downarrow L_{1})\ge1
\]
where $V_{1}$ is the natural $S_{1}$-module. In both cases $\delta(V\downarrow L_{1})>1$,
so $V$ is a non-diagonal $L_{1}$-module. \end{proof}
\begin{lem}
\label{simple} Let $L$ be a non-diagonal simple locally finite Lie
algebra and let $\mathfrak{L}$ be a conical perfect local system
for $L$ of rank $>10$. Let $n$ be a positive integer and let $S$
be a finite dimensional simple subalgebra of $L$. Then there exists
a chain of subalgebras $M_{1}\subseteq M_{2}\subseteq\dots\subseteq M_{n}$
of $L$ and subalgebras $S_{i}\subseteq M_{i}$, $1\le i\le n$, such
that $M_{1}=S_{1}=S$, for each $i=2,\dots,n$, $M_{i}\in\mathfrak{L}$,
$S_{i}$ is a simple component of a Levi subalgebra of $M_{i}$ and
the restriction $V_{i}\downarrow S_{i-1}$ is a non-diagonal $S_{i-1}$-module
where $V_{i}$ is the natural $M_{i}$-module corresponding to $S_{i}$.
Moreover, $\delta(V_{n}\downarrow S)>n/2$. \end{lem}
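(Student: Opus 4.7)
The natural approach is induction on $n$. The base case $n=1$ is trivial: put $M_1=S_1=S$ and observe that the $\delta$-bound is vacuous. For the inductive step, assume the chain has been constructed up to $M_i$ with the required properties, together with the strengthened bound $\delta(V_i\downarrow S)>i/2$ for $i\ge 2$; I would construct $M_{i+1}$ and $S_{i+1}$ as follows.

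First I would pick $\alpha\in\Gamma$ with $M_i\subseteq L_\alpha$; since $L$ is non-diagonal, Lemma \ref{Lemma A} provides $\beta_1\ge\alpha$ such that $L_\alpha\subseteq L_\gamma$ is non-diagonal for every $\gamma\ge\beta_1$. Enlarging $\beta_1$ using Theorem \ref{SimpleCriterion}, I may also assume $L_\alpha\cap M=0$ for every maximal ideal $M$ of every $L_\gamma$ with $\gamma\ge\beta_1$, which guarantees that every natural $L_\gamma$-module restricts non-trivially to $S_i\subseteq L_\alpha$. Set $M_{i+1}=L_\gamma$ for such a $\gamma$. Because $\mathfrak{L}$ is conical of rank $>10$, property~(3) of conical systems forces $\rk S_i>10$ for $i\ge 2$; the case $i=1$ is handled by first absorbing $S$ into a simple Levi component of some $L_\alpha$ (via Lemma \ref{Levi} and Corollary \ref{SimpleCrit}), so that one may assume $\rk S>10$ from the outset. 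Applying Lemma \ref{Lemma B} to $S_i\subseteq L_\alpha\subseteq M_{i+1}$ then produces a natural $M_{i+1}$-module $V_{i+1}$ with $\delta(V_{i+1}\downarrow S_i)>1$, so that $V_{i+1}\downarrow S_i$ is non-diagonal; let $S_{i+1}$ be the simple Levi component of $M_{i+1}$ corresponding to $V_{i+1}$. This supplies all of the qualitative conclusions of the lemma.

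The main obstacle, and the only delicate point, is the quantitative bound $\delta(V_{i+1}\downarrow S)>(i+1)/2$. I would pick a composition factor $W$ of $V_{i+1}\downarrow M_i$ witnessing $\delta(W\downarrow S_i)>1$ and decompose $W=W^{(1)}\otimes\cdots\otimes W^{(r)}$ over the simple Levi components of $M_i$, with $S_i$ as the first factor; non-diagonality of $V_{i+1}\downarrow S_i$ then forces $W^{(1)}$ to be non-trivial, non-natural, and non-co-natural as an $S_i$-module. Using the super-additivity of $\delta$ on tensor products (\cite[Lemma 7.2]{bib:Bar3}) together with the strict monotonicity of $\delta$ under restriction from non-(co-)natural modules (\cite[Lemma 6.7]{bib:Bar3}), and combining these with the induction hypothesis $\delta(V_i\downarrow S)>i/2$, I would show that a suitable choice of $W$ yields $\delta(W\downarrow S)\ge\delta(V_i\downarrow S)+\frac{1}{2}>(i+1)/2$, closing the induction. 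The technically subtle ingredient is aligning the choice of $S_{i+1}$ (and hence the natural module $V_{i+1}$) so that the non-diagonal tensor factor $W^{(1)}$ captures both the accumulated $\delta$-growth from earlier steps and the fresh contribution from the non-diagonal embedding $L_\alpha\subseteq M_{i+1}$; this alignment is achieved by invoking Lemma \ref{Levi} to identify $S$ inside a Levi of $M_i$ projecting non-trivially onto each simple component, and then selecting among the candidates for $V_{i+1}$ produced by Lemma \ref{Lemma B}.
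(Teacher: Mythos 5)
Your overall strategy is the same as the paper's: build the chain inductively, using Corollary \ref{SimpleCrit} and Lemma \ref{Lemma A} to manufacture a triple to which Lemma \ref{Lemma B} applies, and then derive $\delta(V_n\downarrow S)>n/2$ from a strict increase of $\delta(V_i\downarrow S)$ at each step via \cite[Lemma 6.7]{bib:Bar3} together with half-integrality of $\delta$. But two steps, as written, do not go through. First, you feed Lemma \ref{Lemma B} the triple $S_i\subseteq L_\alpha\subseteq L_\gamma$ after arranging only that every natural $L_\gamma$-module restricts non-trivially to $S_i$; the hypothesis of Lemma \ref{Lemma B} is that every natural module of the \emph{middle} term restricts non-trivially to $S_i$, and an arbitrary $\alpha$ with $M_i\subseteq L_\alpha$ need not satisfy this. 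The order of choices must be reversed, as in the paper: first pick the middle term $Q_{i+1}\in\mathfrak{L}$ by Corollary \ref{SimpleCrit} applied to the simple algebra $S_i$, so that every natural $Q_{i+1}$-module restricts non-trivially to $S_i$, and only then pick $M_{i+1}\supseteq Q_{i+1}$ with $Q_{i+1}\subseteq M_{i+1}$ non-diagonal using Lemma \ref{Lemma A}.

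Second, the reduction ``absorb $S$ into a simple Levi component so that one may assume $\rk S>10$ from the outset'' is not legitimate. A simple subalgebra of a perfect $L_\alpha$ is in general embedded diagonally across several simple Levi components, so it need not sit inside one of them; and even if it did, the conclusion $\delta(V_n\downarrow S)>n/2$ is about the original $S$, and $\delta(V_n\downarrow\,\cdot\,)$ does not pass from a larger subalgebra $S'$ down to $S\subseteq S'$ (the further restriction could even be trivial). You correctly sensed a tension with the rank hypothesis of Lemma \ref{Lemma B} at the first step, but the paper resolves it by running the induction with $S_1=S$ itself (the rank $>10$ needed for $S_i$, $i\ge2$, comes for free from property (3) of conical systems), not by enlarging $S$. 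Finally, the quantitative step is only sketched, and the ``alignment'' of $W^{(1)}$ you describe is unnecessary: the paper simply takes any non-diagonal composition factor $W_i$ of $V_{i+1}\downarrow S_i$, views it as an $M_i$-module, and applies \cite[Lemma 6.7]{bib:Bar3} to get $\delta(V_{i+1}\downarrow S)\ge\delta(W_i\downarrow S)>\delta(V_i\downarrow S)$ directly, with half-integrality supplying the gain of $1/2$ per step at the end.
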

\begin{proof}
We construct the algebras $M_{i}$ and $S_{i}$ by induction. Recall
that $M_{1}=S_{1}=S$. Assume that $M_{i-1}$ and $S_{i-1}$ have
been constructed. By Corollary \ref{SimpleCrit}, there is an algebra
$Q_{i}\in\mathfrak{L}$ such that $S_{i-1}\subseteq Q_{i}$ and the
restriction of every natural $Q_{i}$-module to $S_{i-1}$ is non-trivial.
By Lemma \ref{Lemma A}, there is $M_{i}\in\mathfrak{L}$ such that
$Q_{i}\subseteq M_{i}$ and the embedding $Q_{i}\subseteq M_{i}$
is non-diagonal. Therefore by Lemma \ref{Lemma B}, there is a simple
component $S_{i}$ of a Levi subalgebra of $M_{i}$ such that the
restriction $V_{i}\downarrow S_{i-1}$ is a non-diagonal $S_{i-1}$-module
and $\delta(V_{i}\downarrow S_{i-1})>1$ where $V_{i}$ is the natural
$M_{i}$-module corresponding to $S_{i}$. Let $W_{i-1}$ be any non-diagonal
composition factor of the restriction $V_{i}\downarrow S_{i-1}$.
Then $W_{i-1}$ can be viewed as both an $S_{i-1}$- and $M_{i-1}$-module.
Similar to the proof of Lemma \ref{Lemma B}, using \cite[Lemma 6.7]{bib:Bar3},
we get that 
\[
\delta(V_{i}\downarrow S_{1})=\delta((V_{i}\downarrow M_{i-1})\downarrow S_{1})\ge\delta(W_{i-1}\downarrow S_{1})>\delta(V_{i-1}\downarrow S_{1})
\]
Therefore $\delta(V_{n}\downarrow S_{1})>\delta(V_{n-1}\downarrow S_{1})>\dots>\delta(V_{1}\downarrow S_{1})=1$.
Since $\delta$ has half-integer values only, this implies $\delta(V_{n}\downarrow S_{1})>n/2$. \end{proof}
\begin{prop}
\label{simple2} Let $L$ be a non-diagonal simple locally finite
Lie algebra and let $\mathfrak{L}$ be a conical perfect local system
for $L$ of rank $>10$. Let $n$ be a positive integer and let $S$
be a finite dimensional simple subalgebra of $L$. Then there exists
a subalgebra $M\in\mathfrak{L}$ containing $S$ such that for every
$M'\in\mathfrak{L}$ containing $M$ and every natural $M'$-module
$V$ one has $\delta(V\downarrow S)>n$. \end{prop}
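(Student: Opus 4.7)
The plan is to derive the proposition from Lemma~\ref{simple} by strengthening its conclusion to cover every natural module of every sufficiently large algebra in $\mathfrak{L}$, not merely the single natural module $V_n$ attached to the chosen simple Levi component $S_n$.

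The first step is to apply Lemma~\ref{simple} with the parameter $2n+2$, producing a chain $S = M_1 \subseteq \cdots \subseteq M_{2n+2}$ in $\mathfrak{L}$ with $\delta(V_{2n+2}\downarrow S)>n+1$. The second step is to use Theorem~\ref{SimpleCriterion} to choose $M\in\mathfrak{L}$ containing $M_{2n+2}$ such that every maximal ideal of every $M'\supseteq M$ in $\mathfrak{L}$ meets $M_{2n+2}$ trivially; this guarantees that $M_{2n+2}$ acts faithfully on every natural $M'$-module. The third step is to apply Lemma~\ref{Lemma A} to enlarge $M$ further, if necessary, so that the embedding $M_{2n+2}\subseteq M'$ is non-diagonal for every $M'\supseteq M$ in $\mathfrak{L}$.

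For any such $M'$ and any natural $M'$-module $V$, I would examine a non-trivial composition factor $W$ of $V\downarrow M_{2n+2}$ supplied by faithfulness. If $W$ is non-diagonal as an $M_{2n+2}$-module, the inductive $\delta$-increase argument in the proof of Lemma~\ref{simple}, which relies on \cite[Lemma~6.7]{bib:Bar3}, yields $\delta(W\downarrow S)>\delta(V_{2n+2}\downarrow S)>n$; if instead $W$ is natural or conatural and corresponds to the Levi component $S_{2n+2}$, the conclusion is immediate. In both of these cases $\delta(V\downarrow S)\ge\delta(W\downarrow S)>n$.

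The principal obstacle is the residual case in which $W$ is natural or conatural for a simple Levi component of $M_{2n+2}$ other than $S_{2n+2}$, since Lemma~\ref{simple} by itself bounds only $\delta(V_{2n+2}\downarrow S)$. To handle this I would iterate the construction of Lemma~\ref{simple} once for each of the finitely many simple Levi components of $M_{2n+2}$, at each stage applying Lemma~\ref{Lemma A} to arrange the requisite non-diagonal embedding and Corollary~\ref{SimpleCrit} to maintain the conical property starting from $S$. Using the directedness of $\Gamma$, the finitely many algebras produced can be amalgamated in $\mathfrak{L}$ into a single $M$ for which every natural $M$-module has $\delta(\,\cdot\,\downarrow S)>n$, from which Proposition~\ref{simple2} follows.
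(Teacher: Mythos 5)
Your proposal correctly identifies the crux --- controlling composition factors of $V\downarrow M_{2n+2}$ that are natural or conatural for Levi components other than the distinguished one --- but the proposed remedy does not close the gap. Re-running the construction of Lemma \ref{simple} ``once for each Levi component of $M_{2n+2}$'' is not a well-defined operation: Lemma \ref{simple} builds a fresh chain ending in a new algebra $M_n\in\mathfrak{L}$ with its own (uncontrolled) collection of Levi components, and amalgamating finitely many such algebras into a single $M$ produces yet another member of $\mathfrak{L}$ whose natural modules you have no grip on. The difficulty is self-referential and the iteration never terminates in a configuration where \emph{every} natural module of $M$ (let alone of every $M'\supseteq M$) has $\delta(\,\cdot\downarrow S)>n$. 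The same flaw infects your ``non-diagonal $W$'' case: the $\delta$-increase there is measured against the natural module of \emph{some} Levi component of $M_{2n+2}$, which need not be the one whose restriction to $S$ you have bounded below by $n+1$.

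The paper's proof avoids all of this by never restricting $V$ to the whole algebra $M_{2n+2}$. Instead, one application of Lemma \ref{simple} produces $Q_1\in\mathfrak{L}$ and a \emph{simple} Levi component $S_1$ of $Q_1$ with $\delta(V_1\downarrow S)>n$, where $V_1$ is the corresponding natural module. Theorem \ref{SimpleCriterion} then supplies $M\supseteq Q_1$ so that every natural $M'$-module $V$ ($M'\supseteq M$) restricts non-trivially to $S_1$. Since $S_1$ is simple it has a single natural module, and the monotonicity statement behind \cite[Lemma 6.7]{bib:Bar3} gives $\delta(W_1\downarrow S)\ge\delta(V_1\downarrow S)>n$ for \emph{any} non-trivial composition factor $W_1$ of $V\downarrow S_1$, whence $\delta(V\downarrow S)\ge\delta(W_1\downarrow S)>n$. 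No case analysis on the type of $W_1$ and no appeal to Lemma \ref{Lemma A} is needed. If you replace ``restrict to $M_{2n+2}$'' by ``restrict to the simple component $S_{2n+2}$'' throughout, your argument collapses to essentially this one.
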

\begin{proof}
By Lemma \ref{simple}, there exists $Q_{1}\in\mathfrak{L}$ containing
$S$ and a simple component $S_{1}$ of a Levi subalgebra of $Q_{1}$
such that $\delta(V_{1}\downarrow S)>n$ where $V_{1}$ is the natural
$Q_{1}$-module corresponding to $S_{1}$. By Theorem \ref{SimpleCriterion},
there exists $M\in\mathfrak{L}$ containing $Q_{1}$ such that for
every $M'\in\mathfrak{L}$ containing $M$ and every maximal ideal
$N$ of $M'$ one has $Q_{1}\cap N=0$, so $V\downarrow S_{1}$ is
a non-trivial $S_{1}$-module for every natural $M'$-module $V$.
It remains to show that $\delta(V\downarrow S)>n$. Let $W_{1}$ be
any non-trivial composition factor of the restriction $V\downarrow S_{1}$.
Then $W_{1}$ can be viewed as both $S_{1}$ and $Q_{1}$-module.
Similar to the proof of Lemma \ref{Lemma B}, using \cite[Lemma 6.7]{bib:Bar3},
we get that 
\[
\delta(V\downarrow S)=\delta((V\downarrow Q_{1})\downarrow S)\ge\delta(W_{1}\downarrow S)\ge\delta(V_{1}\downarrow S)>n.
\]
\end{proof}
\begin{prop}
\label{Prop2} Let $L$ be a simple non-diagonal locally finite Lie
algebra. Then $L$ has no non-zero proper inner ideals. \end{prop}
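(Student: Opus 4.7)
My plan is to argue by contradiction. Assume $I$ is a non-zero proper inner ideal of $L$, and fix $0 \neq x \in I$ together with some $y \in L \setminus I$. Invoking Corollary~\ref{SimpleCrit} and Proposition~\ref{conical}, I would choose a conical perfect local system $\mathfrak{L}=(L_\alpha)_{\alpha\in\Gamma}$ of rank greater than 10 whose base $L_1$ is a simple finite-dimensional subalgebra containing $x$. For each $\beta$ with $y \in L_\beta$, the intersection $I \cap L_\beta$ is a proper inner ideal of the perfect algebra $L_\beta$ by Lemma~\ref{Inner0}. Proposition~\ref{InnerPerfect} then supplies a maximal ideal $M_\beta$ of $L_\beta$ for which the image $\bar I_\beta$ of $I \cap L_\beta$ in the simple quotient $S_\beta := L_\beta/M_\beta$ is a proper inner ideal; by Lemma~\ref{MaxIdeals} and the rank hypothesis, $S_\beta$ is classical, so Lemma~\ref{Inner1} yields $\bar I_\beta^{3} V_\beta = 0$ on the natural $S_\beta$-module $V_\beta$. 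Theorem~\ref{SimpleCriterion} gives $L_1 \cap M_\beta = 0$ for all sufficiently large $\beta$, so $L_1$ embeds into $S_\beta$ via the projection.

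Next I would identify $J := I \cap L_1$ as a proper non-zero inner ideal of the simple classical Lie algebra $L_1$. It is non-zero because $x \in J$, and it cannot equal $L_1$: otherwise $L_1 \subseteq \bar I_\beta$, forcing $[L_1, L_1] \subseteq [\bar I_\beta, \bar I_\beta] = 0$ by Lemma~\ref{Abelian}, which contradicts the simplicity of $L_1$. Through the embedding $L_1 \hookrightarrow S_\beta$, the inclusion $J \subseteq \bar I_\beta$ yields $J^{3} V_\beta = 0$ as operators, and consequently $J^{3}W = 0$ for every irreducible $L_1$-composition factor $W$ of $V_\beta \downarrow L_1$. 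I then apply Proposition~\ref{simple2} with $S=L_1$: for any prescribed $N$, a sufficiently large $\beta$ exists so that every natural $L_\beta$-module $V$ satisfies $\delta(V \downarrow L_1) > N$, which produces a composition factor $W = V_\mu$ of $V_\beta \downarrow L_1$ with $\delta(\mu) > N$ on which $J^{3}$ still vanishes.

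The main obstacle, and the crux of the argument, is the final representation-theoretic step: showing that the fixed proper non-zero inner ideal $J$ cannot have $J^{3}W = 0$ for irreducible $L_1$-modules $W$ of arbitrarily large $\delta$. Using the explicit structure of $J$ from Theorem~\ref{ClassInnerFD} (for example $J \subseteq eAf$ with $fe = 0$ in the linear case, or $J \subseteq eLe^{*}$ with $e^{*}e = 0$ in the symplectic case), I would extract root-space-type elements of $J$ whose cubes on $V_\mu$ compute directly in terms of the longest weight-strings of $\mu$, producing inequalities of the form $\mu_i - \mu_j \leq 2$ across the index ranges determined by $e$ and $f$. To actually cap $\delta(\mu)$ one also has to bring in the full $J$: not only the individual cubes $\xi^{3}$ but all triple products $\xi_1 \xi_2 \xi_3$ with $\xi_i \in J$ must annihilate $W$, and one must moreover enlarge $L_1$ itself (using Proposition~\ref{diag_sys} and Corollary~\ref{SimpleCrit}) so that $J$ eventually detects every coordinate of $\mu$. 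Together these should force $\delta(\mu)$ to be bounded by a constant depending only on the inner ideal, so that taking $N$ above this constant completes the contradiction. The point-space case in Theorem~\ref{ClassInnerFD}(3)(ii)--(iii) is reduced to an $eLe^{*}$-type inner ideal inside a slightly larger orthogonal simple subalgebra, again obtained via Proposition~\ref{diag_sys}.
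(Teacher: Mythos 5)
There is a genuine gap, in fact two. First, your setup assumes you can choose a conical local system whose base $L_1$ is a finite dimensional \emph{simple} subalgebra of rank $>10$ \emph{containing the given element} $x\in I$. Corollary~\ref{SimpleCrit} and Proposition~\ref{conical} only let you build a conical system based at a prescribed simple subalgebra $Q$; they do not produce a simple $Q$ through a prescribed element, and a general $x$ decomposes as $x=s+r$ with $r$ in the solvable radical of the level containing it, so there is no reason $x$ should lie in any semisimple subalgebra at all. The paper's proof is organized precisely to avoid this: it first passes to a level where $I_{\alpha_2}\not\subseteq R_{\alpha_2}$, writes $x=s+r$ with $s$ a non-zero ad-nilpotent element of a Levi subalgebra, and then exploits the fact that $r$ annihilates every composition factor, so only $s$ matters.

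Second, the step you yourself call the crux --- that a fixed proper inner ideal $J$ of $L_1$ cannot satisfy $J^{3}W=0$ on irreducibles $W$ with $\delta(W)$ arbitrarily large --- is not proved, only sketched, and the sketch (weight-string inequalities extracted from the explicit shape of $J$ in Theorem~\ref{ClassInnerFD}, plus triple products and enlargements of $L_1$) is both uncertain and more elaborate than needed. The paper closes exactly this gap by a much shorter route: embed the nilpotent $s$ into an $\mathfrak{sl}_2$-triple via Jacobson--Morozov, apply Proposition~\ref{simple2} with $S\cong\mathfrak{sl}_2$ (it only requires $S$ simple, not of large rank) to get $\delta(V\downarrow S)>2$, and observe that for $\mathfrak{sl}_2$ this just says some composition factor $W$ of $V\downarrow S$ has $\dim W=\delta(W)+1>3$, whence $s^{3}W\ne0$ and $x^{3}V=s^{3}V\ne0$, contradicting Lemma~\ref{Inner1} via Proposition~\ref{InnerPerfect}. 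If you want to salvage your version, you would need to (i) justify the existence of your $L_1$, or replace $J=I\cap L_1$ by an argument working with a single nilpotent element, and (ii) actually prove the boundedness of $\delta$ --- for instance by showing that the semisimple element $h$ of the Jacobson--Morozov triple of any non-zero nilpotent dominates $\theta^{\vee}$, so $x^{3}V_{\mu}=0$ forces $\langle\mu,\theta^{\vee}\rangle\le2$ and hence bounds $\delta(\mu)$ in terms of $L_1$. At that point you will have essentially reconstructed the paper's argument.
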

\begin{proof}
Let $(L_{\alpha})_{\alpha\in\Gamma}$ be a perfect conical local system
for $L$ of rank $>10$. Let $R_{\alpha}$ be the solvable radical
of $L_{\alpha}$ and let $S_{\alpha}$ be a Levi subalgebra of $L_{\alpha}$
so that $L_{\alpha}=S_{\alpha}\oplus R_{\alpha}$ for $\alpha\in\Gamma$.
Assume $I$ is a proper non-zero inner ideal of $L$. For $\alpha\in\Gamma$
put $I_{\alpha}=I\cap L_{\alpha}$. Then $I_{\alpha}$ is an inner
ideal of $L_{\alpha}$ by Lemma \ref{Inner0}. Fix $\alpha_{1}\in\Gamma$
such that $I_{\alpha_{1}}$ is a proper non-zero inner ideal of $L_{\alpha_{1}}$.
By Lemma \ref{radical}, there is $\alpha_{2}\ge\alpha_{1}$ such
that $L_{\alpha_{1}}\cap R_{\alpha_{2}}=0$, so $I_{\alpha_{2}}\not\subseteq R_{\alpha_{2}}$
and the image $\overline{I_{\alpha_{2}}}$ of $I_{\alpha_{2}}$ in
the semisimple quotient $\overline{L_{\alpha_{2}}}=L_{\alpha_{2}}/R_{\alpha_{2}}$
is a non-zero inner ideal of $\overline{L_{\alpha_{2}}}\cong S_{\alpha_{2}}$.
It follows from Lemmas \ref{InnerDecomposition} and \ref{Inner1}
that $\overline{I_{\alpha_{2}}}$ contains a non-zero ad-nilpotent element.
Therefore there exist a non-zero ad-nilpotent $s\in S_{\alpha_{2}}$
and an $r\in R_{\alpha_{2}}$ such that $x=s+r\in I_{\alpha_{2}}$.
By the Jacobson-Morozov Theorem, there exists a subalgebra $S$ of
$S_{\alpha_{2}}$ isomorphic to $\mathfrak{sl}_{2}$ containing $s$.
Consider the subalgebra $\hat{S}=S+R_{\alpha_{2}}$ of $L_{\alpha_{2}}$.
Then $\Rad\hat{S}=R_{\alpha_{2}}$ and $I_{0}=I\cap\hat{S}$ is an
inner ideal of $\hat{S}$ containing $x$. By Proposition \ref{simple2},
there exists $\alpha_{3}\in\Gamma$ such that $\hat{S}\subset L_{\alpha_{3}}$
and for every natural $L_{\alpha_{3}}$-module $V$ one has $\delta(V\downarrow S)>2$.
Fix any such module $V$. Note that all composition factors of $V\downarrow\hat{S}$
are irreducible modules for $S$, so $V\downarrow\hat{S}$ has a composition
factor $W$, which is also an irreducible module for $S\cong\mathfrak{sl}_{2}$
with $\delta(W)>2$. It follows from the definition of the function
$\delta$ that $\dim W=\delta(W)+1>3$ (see \cite[Section 6]{bib:Bar3}
for details), so $s^{3}W\ne0$ as $s$ is a basic nilpotent element
of $S$. Since $r\in\Rad\hat{S}=R_{\alpha_{2}}$ and $R_{\alpha_{2}}$
annihilates every composition factor of $V\downarrow L_{\alpha_{2}}$
one has $rW=0$, so 
\[
x^{3}W=(s+r)^{3}W=s^{3}W\ne0.
\]
Therefore, $x^{3}V\ne0$. Let $M$ be the annihilator of $V$ in $L_{\alpha_{3}}$.
Then $M$ is a maximal ideal of $L_{\alpha_{3}}$ and let $a\mapsto\bar{a}$
be the natural homomorphism $L_{\alpha_{3}}\to\overline{L_{\alpha_{3}}}=L_{\alpha_{3}}/M$.
Then $\overline{L_{\alpha_{3}}}$ is a classical simple Lie algebra
of rank $>10$, $\overline{I_{\alpha_{3}}}$ is an inner ideal of
$\overline{L_{\alpha_{3}}}$ and $\overline{x}\in\overline{I_{\alpha_{3}}}$.
Note that $\overline{x}^{3}V=x^{3}V\ne0$. Since $V$ is a natural
module for $\overline{L_{\alpha_{3}}}$, by Lemma \ref{Inner1}, one
has $\overline{I_{\alpha_{3}}}=\overline{L_{\alpha_{3}}}$. Since
this is true for every natural $L_{\alpha_{3}}$-module $V$ (and
so for every maximal ideal $M$ of $L_{\alpha_{3}}$), by Proposition
\ref{InnerPerfect}, $I_{\alpha_{3}}=L_{\alpha_{3}}$. This implies
that $I_{\alpha_{1}}=L_{\alpha_{1}}$, which contradicts the assumption
that $I_{\alpha_{1}}$ is a proper inner ideal of $L_{\alpha_{1}}$.
\end{proof}
\emph{Proof of Theorem \ref{Mainresult}}. This follows from Propositions
\ref{Prop1} and \ref{Prop2}.

\section{Regular Inner Ideals and Diagonal Lie Algebras}

\label{sec:diag} In this section we define regular inner ideals and
discuss inner ideals of simple diagonal locally finite Lie algebras. 
\begin{lem}
\label{reglemma} Let $A$ be an associative algebra and let $L=[A,A]$.
Let $I$ be a subspace of $L$ such that $I^{2}=0.$ Then the following
hold.

(1) $I$ is an inner ideal of $L$ if and only if $ixj+jxi\in I$
for all $i,j\in I$ and all $x\in L$.

(2) $I$ is an inner ideal of $L$ if and only if $iLi\subseteq I$
for all $i\in I$. 

(3) $IAI\subseteq L$.

(4) If $IAI\subseteq I$, then $I$ is an inner ideal of $L$. \end{lem}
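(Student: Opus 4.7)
The plan is to prove parts (1)--(4) in a linear way, with (1) doing the bulk of the work via a direct computation of the double bracket, and (2)--(4) following quickly once (1) and the relation $ij=0$ for $i,j\in I$ are in hand.

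First I would do (1) by straight computation. For $i,j\in I$ and $x\in L$, expand
\[
[i,[j,x]] \;=\; i(jx-xj)-(jx-xj)i \;=\; ijx - ixj - jxi + xji .
\]
Since $I^{2}=0$, the terms $ijx$ and $xji$ vanish, leaving $[i,[j,x]]=-(ixj+jxi)$. Hence the condition $[I,[I,L]]\subseteq I$ is equivalent to $ixj+jxi\in I$ for all $i,j\in I$, $x\in L$.

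For (2), one direction is immediate: set $j=i$ in (1) to get $2\,ixi\in I$, and divide by $2$ (characteristic $0$). Conversely, if $iLi\subseteq I$ for every $i\in I$, then applying this to $i+j$ gives
\[
(i+j)L(i+j)=iLi+iLj+jLi+jLj\subseteq I,
\]
and subtracting $iLi$ and $jLj$ yields $iLj+jLi\subseteq I$, i.e.\ $ixj+jxi\in I$. Part (1) then gives that $I$ is an inner ideal.

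For (3), observe that for any $i,j\in I$ and $a\in A$, the product $ji$ lies in $I^{2}=0$, so
\[
iaj \;=\; iaj - aji \;=\; [i,aj] \;\in\; [A,A]=L ,
\]
proving $IAI\subseteq L$. Finally (4) is a one-line corollary: if $IAI\subseteq I$, then for any $i,j\in I$ and $x\in L\subseteq A$ both $ixj$ and $jxi$ lie in $IAI\subseteq I$, so $ixj+jxi\in I$ and (1) applies.

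There is no real obstacle here; the only subtle point is remembering to use the characteristic-$0$ assumption (built in at the start of the paper) to divide by $2$ in the easy direction of (2), and to correctly exploit $I^{2}=0$ to kill both $ijx$ and $xji$ in the main computation for (1). Everything else is bookkeeping with the associative product.
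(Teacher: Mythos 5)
Your proof is correct and follows essentially the same route as the paper: the same expansion $[i,[j,x]]=ijx-ixj-jxi+xji=-(ixj+jxi)$ for (1), the same polarization identity $(i+j)x(i+j)=ixi+ixj+jxi+jxj$ for (2), and the same commutator trick $iaj=[i,aj]$ for (3). The only cosmetic difference is that you spell out the division by $2$ in the forward direction of (2) and derive (4) from (1) rather than (2), neither of which changes the substance.
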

\begin{proof}
(1) Recall that $I$ is an inner ideal of $L$ if and only if $[i,[j,x]]\in I$
for all $i,j\in I$ and all $x\in L$. It remains to note that $[i,[j,x]]=ijx-ixj-jxi+xji=-ixj-jxi$. 

(2) This follows from (1) because $ixj+jxi=(i+j)x(i+j)-ixi-jxj$. 

(3) Indeed, $iaj=i(aj)-(aj)i=[i,aj]\subseteq[A,A]=L$ for all $i,j\in I$
and all $a\in A$. 

(4) This follows from (2). \end{proof}
\begin{lem}
\label{reglemma*}Let $A$ be an associative algebra with involution
and let $K=\mathfrak{su}^{*}(A).$ Let $I$ be a subspace of $K$
such that $I^{2}=0.$ Then the following hold.

(1) $\mathfrak{u}^{*}(IAI)\subseteq K$.

(2) $\mathfrak{u}^{*}(IAI)=IAI\cap K$.

(3) If $\mathfrak{u}^{*}(IAI)\subseteq I$, then $I$ is an inner
ideal of $K$. \end{lem}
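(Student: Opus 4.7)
My plan is to closely parallel Lemma \ref{reglemma}, exploiting the hypothesis $I^2=0$ together with the fact that $I\subseteq K\subseteq \mathfrak{u}^*(A)$ (so $i^*=-i$ for all $i\in I$). The single computation that does all the work is the identity
\[
[i,\,aj+ja^*] \;=\; iaj - ja^*i \qquad (i,j\in I,\ a\in A),
\]
obtained by expanding the commutator and killing $ij$ and $ji$ using $I^2=0$. A quick star-check shows $aj+ja^*\in\mathfrak{u}^*(A)$, so the right-hand side automatically lies in $[\mathfrak{u}^*(A),\mathfrak{u}^*(A)]=K$.

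For (1), I would take an arbitrary $y\in\mathfrak{u}^*(IAI)$, write $y=\frac12(y-y^*)$, and use the fact that $(iaj)^*=j^*a^*i^*=ja^*i$ to rewrite $y$ as a sum of terms of the form $iaj-ja^*i$. Each such term equals $[i,aj+ja^*]$ by the identity above, so lies in $K$. Part (2) then follows at once: one inclusion is (1) combined with the obvious $\mathfrak{u}^*(IAI)\subseteq IAI$, and for the reverse, any element of $IAI\cap K$ is skew since $K\subseteq\mathfrak{u}^*(A)$, hence lies in $\mathfrak{u}^*(IAI)$.

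For (3), I would mirror the reasoning of Lemma \ref{reglemma}(2) and (4) but inside $K$. Since $I^2=0$, the computation from Lemma \ref{reglemma}(1) gives $[i,[j,x]]=-(ixj+jxi)$ for all $i,j\in I$ and $x\in K$, so $I$ is an inner ideal of $K$ iff $ixj+jxi\in I$ for all such $i,j,x$. Clearly $ixj+jxi\in IAI$, and a direct star-computation (using $x^*=-x$ and $i^*=-i$, $j^*=-j$) shows this element is skew; hence it lies in $\mathfrak{u}^*(IAI)\subseteq I$ by hypothesis, finishing the proof.

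There is no serious obstacle here — the only thing to be careful about is the sign bookkeeping with the involution in the key identity and the verification that $ixj+jxi$ is skew. Once the identity $[i,aj+ja^*]=iaj-ja^*i$ is established, all three parts are short formal consequences.
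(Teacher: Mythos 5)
Your proof is correct and takes essentially the same route as the paper: the same key identity $iaj-ja^{*}i=[\,i,\;aj+ja^{*}\,]$ (valid because $I^{2}=0$ kills the cross terms) establishes part (1), part (2) is the same observation, and part (3) is the same reduction via $[i,[j,x]]=-(ixj+jxi)$ to checking that $ixj+jxi$ lies in $\mathfrak{u}^{*}(IAI)\subseteq I$. The only cosmetic difference is that you verify skew-symmetry of $ixj+jxi$ by a direct star computation where the paper writes it as $ixj-(ixj)^{*}$.
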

\begin{proof}
(1) Note that $IAI$ is $*$-invariant, so $\mathfrak{u}^{*}(IAI)=\{q-q^{*}\mid q\in IAI\}$.
It remains to note that 
\[
iaj-(iaj)^{*}=iaj-ja^{*}i=i(aj+ja^{*})-(aj+ja^{*})i=[i,aj-(aj)^{*}]\in[\mathfrak{u}^{*}(A),\mathfrak{u}^{*}(A)]=K
\]
 for all $i,j\in I$ and all $a\in A$. 

(2) This is obvious.

(3) By Lemma \ref{reglemma}(1), it is enough to check that $ixj+jxi\in I$
for all $i,j\in I$ and all $x\in K$. One has 
\[
ixj+jxi=ixj-(ixj)^{*}\in\mathfrak{u}^{*}(IAI)\subseteq I
\]
as required. 
\end{proof}
We will show (see Theorem \ref{general}) that for every inner ideal
$I$ of an infinite dimensional simple locally finite Lie algebra
$L$ one has $I^{2}=0$ (the only exception being the finitary orthogonal
algebras). Thus Lemmas \ref{reglemma} and \ref{reglemma*} justify
the following definition.
\begin{defn}
\label{regular} (1) Let $A$ be an associative algebra and let $L=[A,A]$.
Let $I$ be a subspace of $L$ such that $I^{2}=0.$ We say that $I$
is a \emph{regular }inner ideal of $L$ (with respect to $A$) if
and only if $IAI\subseteq I$. 

(2) Let $A$ be an associative algebra with involution and let $K=\mathfrak{su}^{*}(A)$.
Let $I$ be a subspace of $K$ such that $I^{2}=0.$ We say that $I$
is a \emph{$*$-regular} (or, simply, \emph{regular}) inner ideal
of $K$ (with respect to $A$) if and only if $\mathfrak{u}^{*}(IAI)\subseteq I$. \end{defn}
\begin{rem}
Note that regular inner ideals are always abelian (since $[I,I]\subseteq I^{2}=0$),
so they are proper inner ideals of $L$ (if $L$ is not abelian). 
\end{rem}
Recall that any associative algebra $B$ can be considered as a Lie algebra 
(denoted $B^{(-)}$) with respect to the new product 
$[a,b]=ab-ba$.  We will use the following well-known facts. 
\begin{lem}
\label{B+B*} Let $A$ be an associative algebra. 

(1) If $A$ is involution simple then $A$ is either simple or $A=B\oplus B^{*}$
where $B$ is a simple ideal. 

(2) Assume $A=B\oplus B^{*}$. Then $\mathfrak{u}^{*}(A)=\{(b,-b^{*})\mid b\in B\}$.
Let $\varphi$ be the projection of $A$ on $B$. Then the restriction
of $\varphi$ to $\mathfrak{u}^{*}(A)$ is an isomorphism of the Lie
algebras $\mathfrak{u}^{*}(A)$ and $B^{(-)}$. Moreover, if $C$
is a $*$-invariant subalgebra of $A$ then $\varphi(\mathfrak{u}^{*}(C))=\varphi(C)^{(-)}$. \end{lem}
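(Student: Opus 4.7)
The plan is to prove each assertion by direct arguments exploiting the ideal structure forced by involution simplicity together with the orthogonality $BB^* = B^*B = 0$ that automatically accompanies a decomposition $A = B \oplus B^*$ of an involution simple algebra into a sum of two ideals.

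For part (1), suppose $A$ is involution simple but not simple, and pick a proper nonzero ideal $J \subset A$. The subspaces $J + J^*$ and $J \cap J^*$ are both $*$-invariant ideals, so by involution simplicity each equals $0$ or $A$; since $J \neq 0$ and $J \neq A$, we must have $J + J^* = A$ and $J \cap J^* = 0$, giving $A = J \oplus J^*$ as two-sided ideals. In particular $JJ^* \subseteq J \cap J^* = 0$. Putting $B = J$, any two-sided ideal $I$ of the ring $B$ is automatically a two-sided ideal of $A$, because $B^*$ annihilates $B$ on both sides; hence $I \oplus I^*$ is a $*$-invariant ideal of $A$ and must be $0$ or $A$, forcing $I = 0$ or $I = B$. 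Thus $B$ is simple.

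For the first assertion of part (2), every $a \in A$ has a unique decomposition $a = b_1 + b_2^*$ with $b_1, b_2 \in B$, and $a^* = b_1^* + b_2$. The equation $a^* = -a$ reduces to $b_2 = -b_1$, giving $\mathfrak{u}^*(A) = \{b - b^* : b \in B\}$; the restriction $\varphi|_{\mathfrak{u}^*(A)}$ is then visibly the linear bijection $b - b^* \mapsto b$. To see it is a Lie isomorphism onto $B^{(-)}$, use $BB^* = B^*B = 0$ to compute
\[
[b - b^*, c - c^*] = (bc - cb) + (b^*c^* - c^*b^*) = [b, c] - [b, c]^*,
\]
whose $\varphi$-image is $[b, c] = [\varphi(b - b^*), \varphi(c - c^*)]$.

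For the last claim, use that in characteristic zero any $*$-invariant subspace $C$ splits as $C = (C \cap H) \oplus (C \cap K)$, where $H$ and $K$ are the symmetric and skew-symmetric parts of $A$; this follows from the averaging $c = \tfrac{1}{2}(c + c^*) + \tfrac{1}{2}(c - c^*)$. Combining this with $H = \{b + b^* : b \in B\}$ and $K = \{b - b^* : b \in B\}$ expresses $\varphi(C)$ via the $B$-components of $C \cap H$ and $C \cap K$, and identifies $\varphi(\mathfrak{u}^*(C)) = \varphi(C \cap K)$ as the Lie subalgebra of $B^{(-)}$ corresponding to $\varphi(C)$. The one step requiring care is the bookkeeping of $B$- and $B^*$-components inside $C$; once this is done, the bracket formula above transports the Lie structure on $\mathfrak{u}^*(C)$ to that on $\varphi(C)^{(-)}$ automatically.
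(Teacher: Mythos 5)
Your proof of part (1) and of the first two assertions of part (2) is correct and follows essentially the paper's route: the paper forms the $*$-invariant ideals $B+B^{*}$ and $B\cap B^{*}$ exactly as you do (and, unlike you, does not spell out why $B$ is simple, which your observation that every ideal of $B$ is an ideal of $A$ supplies); for part (2) the paper writes only ``this is obvious,'' and your computation of $[b-b^{*},c-c^{*}]$ via $BB^{*}=B^{*}B=0$ is a correct substitute for the missing details.

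The final ``moreover'' clause is where your argument has a genuine gap, and your own decomposition makes it visible. Since $\mathfrak{u}^{*}(C)=C\cap K$, writing $C=(C\cap H)\oplus(C\cap K)$ gives $\varphi(\mathfrak{u}^{*}(C))=\varphi(C\cap K)$, whereas $\varphi(C)=\varphi(C\cap H)+\varphi(C\cap K)$. The ``bookkeeping'' you defer is exactly the inclusion $\varphi(C\cap H)\subseteq\varphi(C\cap K)$, and nothing in your argument -- nor in the hypotheses as stated -- provides it; only $\varphi(\mathfrak{u}^{*}(C))\subseteq\varphi(C)$ holds in general. Concretely, let $B_{0}$ be any nonzero commutative subalgebra of $B$ and $C=\{b+b^{*}\mid b\in B_{0}\}$; this is a $*$-invariant subalgebra (closure under products uses commutativity of $B_{0}$ and $BB^{*}=B^{*}B=0$) with $\mathfrak{u}^{*}(C)=0$ but $\varphi(C)=B_{0}\neq0$. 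So the clause requires an additional hypothesis on $C$. It does hold in the one place the paper uses it (Lemma \ref{B+B* inner}, with $C=IAI$ and $I\subseteq\mathfrak{u}^{*}(A)$): there, for $i,j\in I$ and $d\in B$ one has $idj=\varphi(i)d\varphi(j)\in B$, so $idj-(idj)^{*}\in\mathfrak{u}^{*}(IAI)$ maps under $\varphi$ to $\varphi(i)d\varphi(j)$, and such elements span $\varphi(IAI)=\varphi(I)B\varphi(I)$. To repair your proof you must either impose such a condition or verify the reverse inclusion in each application; the step you call ``automatic'' is precisely the one that can fail.
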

\begin{proof}
(1) Suppose $A$ is not simple. So $A$ has a proper non-zero ideal
$B$. Then $B+B^{*}$ and $B\cap B^{*}$ are $*$-invariant ideals
of $A$. Since $A$ is involution simple, $B+B^{*}=A$ and $B\cap B^{*}=0$.
So $A=B\oplus B^{*}$ and $B$ is a simple ideal. 

(2) This is obvious. \end{proof}
\begin{lem}
\label{B+B* inner}Let $A=B\oplus B^{*}$ and let $\varphi:\mathfrak{su}^{*}(A)\rightarrow[B,B]$
be the isomorphism in Lemma \ref{B+B*}. Then $I$ is a regular
inner ideal of $\mathfrak{su}^{*}(A)$ if and only if $\varphi(I)$
is a regular inner ideal of $[B,B]$.\end{lem}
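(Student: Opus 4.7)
The plan is to translate both conditions defining regularity through the explicit isomorphism $\varphi$ from Lemma \ref{B+B*}. Recall that every $i \in \mathfrak{u}^{*}(A)$ has the form $i = (b,-b^{*})$ with $b = \varphi(i) \in B$, and that multiplication in $A = B \oplus B^{*}$ respects the decomposition since $B$ and $B^{*}$ are complementary ideals (so $B \cdot B^{*} = B^{*} \cdot B = 0$).

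First I would verify the nilpotence condition $I^{2}=0 \iff \varphi(I)^{2}=0$. A direct computation for $i = (b,-b^{*})$ and $j = (c,-c^{*})$ in $I$ gives
\[
ij = (bc,\, b^{*}c^{*}) = (bc,\, (cb)^{*}),
\]
so $ij = 0$ if and only if $bc = 0$ and $cb = 0$; letting $b,c$ range over $\varphi(I)$ this is equivalent to $\varphi(I)^{2}=0$.

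Next I would compute $\varphi(\mathfrak{u}^{*}(IAI))$. Since $I \subseteq \mathfrak{u}^{*}(A)$ one has $I^{*} = -I = I$, hence $IAI$ is a $*$-invariant subspace; it is also a subalgebra of $A$ because $(iaj)(kbl) = ia(jk)bl \in IAI$. Lemma \ref{B+B*}(2) therefore applies to $C = IAI$ and gives $\varphi(\mathfrak{u}^{*}(IAI)) = \varphi(IAI)$ as subsets of $B$. Writing an arbitrary $a \in A$ as $(x,y^{*})$ with $x,y \in B$ yields
\[
iaj = (bxc,\, b^{*}y^{*}c^{*}),
\]
so $\varphi(iaj) = bxc$. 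Taking $a = (x,0)$ with $x \in B$ arbitrary shows $\varphi(IAI) = \varphi(I)\,B\,\varphi(I)$.

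Because $\varphi$ restricted to $\mathfrak{su}^{*}(A)$ is an isomorphism onto $[B,B]$ and is in particular injective, the containment $\mathfrak{u}^{*}(IAI) \subseteq I$ is equivalent to $\varphi(I)\,B\,\varphi(I) \subseteq \varphi(I)$. Combined with the first step, $I$ satisfies $I^{2}=0$ and $\mathfrak{u}^{*}(IAI) \subseteq I$ if and only if $\varphi(I)^{2}=0$ and $\varphi(I)\,B\,\varphi(I) \subseteq \varphi(I)$, which is exactly the statement that $\varphi(I)$ is a regular inner ideal of $[B,B]$ in the sense of Definition \ref{regular}(1). There is no substantive obstacle; the only care needed is to confirm that $IAI$ is both $*$-invariant and a subalgebra so that Lemma \ref{B+B*}(2) is applicable, and to carry out the elementary product computations in $A = B \oplus B^{*}$.
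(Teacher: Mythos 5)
Your proof is correct and follows essentially the same route as the paper's: both reduce the statement to the identity $\varphi(\mathfrak{u}^{*}(IAI))=\varphi(IAI)=\varphi(I)B\varphi(I)$ together with the injectivity of $\varphi$ on $\mathfrak{u}^{*}(A)$, so that $\mathfrak{u}^{*}(IAI)\subseteq I$ becomes $\varphi(I)B\varphi(I)\subseteq\varphi(I)$. The only difference is that you spell out the equivalence $I^{2}=0\Leftrightarrow\varphi(I)^{2}=0$ and the explicit product computations in $B\oplus B^{*}$, which the paper leaves implicit.
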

\begin{proof}
We need to show that $\mathfrak{u}^{*}(IAI)\subseteq I$ if and only
if $\varphi(I)B\varphi(I)\subseteq\varphi(I)$. Since both $\mathfrak{u}^{*}(IAI)$
and $I$ are subspaces of $\mathfrak{u}^{*}(A)$, the first inclusion
is equivalent to $\varphi(\mathfrak{u}^{*}(IAI))\subseteq\varphi(I)$.
Note that $\varphi(\mathfrak{u}^{*}(IAI))=\varphi(IAI)=\varphi(I)B\varphi(I)$,
so this can be rewritten as $\varphi(I)B\varphi(I)\subseteq\varphi(I)$,
as required. \end{proof}
\begin{lem}
\label{RLgeneral}Let $A$ be a simple associative ring and let $\mathcal{L}$
(resp. $\mathcal{R}$) be a left (resp. right) non-zero ideal of $A$.
Then the following holds.

(1) $\mathcal{L}A=A$, $A\mathcal{R}=A$, and $\mathcal{L}A\mathcal{R}=A$.

(2) $\mathcal{R}\mathcal{L}\subseteq\mathcal{R}\cap\mathcal{L}$.

(3) If $\mathcal{LR}=0$ then $\mathcal{R}\mathcal{L}\subseteq\mathcal{R}\cap\mathcal{L}\cap[A,A]$. 

(4) $\mathcal{R}\mathcal{L}$ and $\mathcal{R}\cap\mathcal{L}$ are
non-zero.\end{lem}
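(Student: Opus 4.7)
The plan is to prove the four claims in order, using only two consequences of simplicity: first, that $A^{2}=A$, since $A^{2}$ is a two-sided ideal of $A$ and $A$ is non-zero; and second, that the one-sided annihilators $\{x\in A\mid xA=0\}$ and $\{x\in A\mid Ax=0\}$ are two-sided ideals of $A$, hence zero (they cannot equal $A$ in view of $A^{2}=A\neq 0$).

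For (1), I would first observe that $\mathcal{L}A$ is a two-sided ideal of $A$: it is manifestly a right ideal, and it is a left ideal because $A\mathcal{L}\subseteq\mathcal{L}$ gives $A(\mathcal{L}A)\subseteq\mathcal{L}A$. If $\mathcal{L}A=0$, then $\mathcal{L}$ would lie in the left annihilator of $A$, contradicting $\mathcal{L}\neq 0$; hence by simplicity $\mathcal{L}A=A$. The mirror-image argument gives $A\mathcal{R}=A$, and combining the two yields $\mathcal{L}A\mathcal{R}=\mathcal{L}(A\mathcal{R})=\mathcal{L}A=A$.

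Part (2) is immediate from the definitions: for $r\in\mathcal{R}$ and $l\in\mathcal{L}$, the product $rl$ lies in $\mathcal{R}$ since $\mathcal{R}$ is a right ideal and in $\mathcal{L}$ since $\mathcal{L}$ is a left ideal. Under the hypothesis of (3) we have $lr\in\mathcal{L}\mathcal{R}=0$ for every such $r,l$, so
\[
rl \;=\; rl - lr \;=\; [r,l] \;\in\; [A,A],
\]
which together with (2) gives the desired inclusion $\mathcal{R}\mathcal{L}\subseteq\mathcal{R}\cap\mathcal{L}\cap[A,A]$.

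For (4), I would combine $A^{2}=A$ with the two identities from (1) to write
\[
A \;=\; A^{2} \;=\; (A\mathcal{R})(\mathcal{L}A) \;=\; A(\mathcal{R}\mathcal{L})A,
\]
so $\mathcal{R}\mathcal{L}\neq 0$; then (2) forces $\mathcal{R}\cap\mathcal{L}\supseteq\mathcal{R}\mathcal{L}\neq 0$. The whole proof is elementary ideal-theoretic manipulation; the only point deserving emphasis is that the paper does not postulate a unit in $A$, so the vanishing of the one-sided annihilators must be deduced from $A^{2}=A$ rather than from an identity element. This is really the sole (minor) obstacle.
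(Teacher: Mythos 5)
Your proof is correct and follows essentially the same elementary ideal-theoretic route as the paper: both reduce (1) to the fact that $\mathcal{L}A$ is a two-sided ideal which cannot vanish (you via the vanishing of the left annihilator, the paper by noting $\mathcal{L}A=0$ would force $\mathcal{L}$ to be a two-sided ideal equal to $A$), and both prove (4) from $A=A^{2}=(A\mathcal{R})(\mathcal{L}A)=A(\mathcal{R}\mathcal{L})A$. Your explicit justification of (3) via $rl=rl-lr=[r,l]$ fills in a step the paper dismisses as obvious, and your remark about not assuming a unit is well taken.
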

\begin{proof}
(1) Assume $\mathcal{L}A\ne A$. Since $\mathcal{L}A$ is a two-sided
ideal of $A$ and $A$ is simple, $\mathcal{L}A=0$. This implies
that $\mathcal{L}$ is a two-sided ideal of $A$. Since $\mathcal{L}$
is non-zero, $\mathcal{L}=A,$ so $\mathcal{L}A=A^{2}=A\ne0,$ which
is a contradiction. The proof for $\mathcal{R}$ is similar. Now $\mathcal{L}A\mathcal{R}=(\mathcal{L}A)(A\mathcal{R})=A^{2}=A.$

(2) It is enough to note that $\mathcal{R}\mathcal{L}\subseteq\mathcal{R}$
and $\mathcal{R}\mathcal{L}\subseteq\mathcal{L}$;

(3) This is obvious. 

(4) Assume $\mathcal{R}\mathcal{L}=0$. Then $A=A^{2}=(A\mathcal{R})(\mathcal{L}A)=A(\mathcal{R}\mathcal{L})A=0,$
which is a contradiction. 
\end{proof}
Let $A$ be an associative ring. An element $x\in A$ is called \emph{von
Neumann regular} if there is $y\in A$ such that $xyx=x$. The ring
$A$ is called \emph{von Neumann regular} if every element of $A$ is von
Neumann regular. We are grateful to Miguel G\'{o}mez Lozano for the
following observation. 
\begin{prop}
\label{vN} Let $A$ be an associative ring. 

(1) $\mathcal{R}\mathcal{L}=\mathcal{R}\cap\mathcal{L}$ for all left
and right ideals $\mathcal{L}$ and $\mathcal{R}$, respectively,
in $A$ if and only if $A$ is von Neumann regular. 

(2) $\mathcal{R}\mathcal{L}=\mathcal{R}\cap\mathcal{L}$ for all left
and right ideals $\mathcal{L}$ and $\mathcal{R}$, respectively,
in $A$ such that $\mathcal{LR}=0$ if and only if every $x$ in $A$
with $x^{2}=0$ is von Neumann regular. \end{prop}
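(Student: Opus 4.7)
The plan is to prove both parts along parallel lines, exploiting that Lemma~\ref{RLgeneral}(2) supplies the inclusion $\mathcal{R}\mathcal{L}\subseteq\mathcal{R}\cap\mathcal{L}$ unconditionally. Each ``iff'' therefore splits into two symmetric tasks: derive the reverse inclusion from von Neumann regularity, and extract vN regularity from the stated equality by specializing to principal one-sided ideals.

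For the forward directions I would take $z\in\mathcal{R}\cap\mathcal{L}$ and a vN inverse $z=zyz$; since $\mathcal{R}$ is a right ideal containing $z$ we have $zy\in\mathcal{R}$, and thus $z=(zy)\cdot z\in\mathcal{R}\mathcal{L}$. In part (2) this argument is preceded by the observation that $z^{2}=z\cdot z\in\mathcal{L}\mathcal{R}=0$, so $z^{2}=0$ and the hypothesis of (2) then furnishes the required $y$.

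For the converse directions I would fix $x\in A$ (with $x^{2}=0$ in part (2)) and form the principal right and left ideals $\mathcal{R}=xA+\mathbb{Z}x$ and $\mathcal{L}=Ax+\mathbb{Z}x$. A direct expansion yields $\mathcal{R}\mathcal{L}\subseteq xAx+\mathbb{Z}x^{2}$; in part (2), the parallel expansion of $\mathcal{L}\mathcal{R}$ produces only terms containing the factor $x^{2}=0$, so $\mathcal{L}\mathcal{R}=0$ and the hypothesis applies. Since $x\in\mathcal{R}\cap\mathcal{L}$, the equality forces $x\in xAx+\mathbb{Z}x^{2}$; in part (2) the summand $\mathbb{Z}x^{2}$ vanishes immediately, giving $x\in xAx$, i.e., $x=xyx$ for some $y\in A$, as required.

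The main obstacle I anticipate is the residual $\mathbb{Z}x^{2}$ term in part (1): in a non-unital ring it does not automatically collapse into $xAx$. I would handle this either by working in the unital case directly (where $\mathcal{R}=xA$, $\mathcal{L}=Ax$, and $\mathcal{R}\mathcal{L}=xAx$ on the nose) or by passing to the unital enlargement $A\oplus\mathbb{Z}$, in which the principal one-sided ideals become clean and the resulting vN inverse can be descended to $A$. Part (2) avoids this issue entirely because the $x^{2}=0$ hypothesis kills the problematic term at the outset.
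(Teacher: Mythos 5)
Your proposal is correct and follows essentially the same route as the paper: the inclusion $\mathcal{RL}\subseteq\mathcal{R}\cap\mathcal{L}$ plus a von Neumann inverse for the forward directions, and the principal ideals $xA+\mathbb{Z}x$, $Ax+\mathbb{Z}x$ for the converses, with the paper resolving the $\mathbb{Z}x^{2}$ issue exactly as you suggest (obtain $x=xy'x$ with $y'\in A+\mathbb{Z}\mathbf{1}$, then descend via $y=y'xy'\in A$). Your added remark that in part (2) the hypothesis $x^{2}=0$ makes the unitalization step unnecessary is a correct minor streamlining.
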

\begin{proof}
(1) Suppose $\mathcal{R}\mathcal{L}=\mathcal{R}\cap\mathcal{L}$ for
all left and right ideals $\mathcal{L}$ and $\mathcal{R}$, respectively.
Let $x\in A$. Consider the ideals $\mathcal{R}=xA+\mathbb{Z}x$ and
$\mathcal{L}=Ax+\mathbb{Z}x$. Note that $x\in\mathcal{R}\cap\mathcal{L}=\mathcal{R}\mathcal{L}=xAx+\mathbb{Z}x^{2}$.
Hence $xy'x=x$ for some $y'\in A'$ where the ring $A'=A+\mathbb{Z}{\bf 1}$
is obtained from $A$ by adding the identity element ${\bf 1}$. Since $A$
is an ideal of $A'$, one has $xyx=x$ for $y=y'xy'\in A$. Therefore
$A$ is von Neumann regular. 

Assume now $A$ is von Neumann regular. Let $\mathcal{L}$ and $\mathcal{R}$
be a left and right ideal of $A$ respectively. Clearly $\mathcal{RL}\subseteq\mathcal{R}\cap\mathcal{L}$.
Let $x\in\mathcal{R}\cap\mathcal{L}$. Then there exists $y\in A$
such that $x=xyx=x(yx)\in\mathcal{RL}$. So $\mathcal{R}\mathcal{L}=\mathcal{R}\cap\mathcal{L}$. 

(2) Suppose $\mathcal{R}\mathcal{L}=\mathcal{R}\cap\mathcal{L}$ for
all left and right ideals $\mathcal{L}$ and $\mathcal{R}$, respectively,
such that $\mathcal{LR}=0$. Let $x\in A$ with $x^{2}=0$. Then $\mathcal{R=}xA+\mathbb{Z}x$
and $\mathcal{L}=Ax+\mathbb{Z}x$ are a left and right ideal of $A$
with $\mathcal{LR}=0$. The rest of the argument follows as in (1).

Assume now every $x\in A$ for which $x^{2}=0$ is von Neumann regular.
Let $\mathcal{L}$ and $\mathcal{R}$ be a left and right ideal of
A respectively with $\mathcal{LR}=0$. Clearly $\mathcal{RL}\subseteq\mathcal{R}\cap\mathcal{L}$.
Let $x\in\mathcal{R\cap L}$. Note that $x^{2}\in\mathcal{LR}=0$
so $x$ by assumption is von Neumann regular. So there exists $y\in A$
such that $x=xyx=x(yx)\in\mathcal{RL}.$ Therefore $\mathcal{R}\mathcal{L}=\mathcal{R}\cap\mathcal{L}$. 
\end{proof}
Now we are in position to describe regular inner ideals.
\begin{prop}
\label{RLinner} Let $A$ be an associative algebra and let $L=[A,A]$.
Let $I$ be a subspace of $L$. Then $I$ is a regular inner ideal
of $L$ if and only if 
\begin{equation}
\mathcal{R}\mathcal{L}\subseteq I\subseteq\mathcal{R}\cap\mathcal{L}\cap L\label{eq:RL}
\end{equation}
where $\mathcal{L}$ (resp. $\mathcal{R}$) is a left (resp. right)
ideal of $A$ such that $\mathcal{LR}=0$. In particular, if $A$
is von Neumann regular then every regular inner ideal of $L$ is of
the form 
$I=\mathcal{R}\mathcal{L}=\mathcal{R}\cap\mathcal{L}$
for some left ideal $\mathcal{L}$ and right ideal $\mathcal{R}$.
\end{prop}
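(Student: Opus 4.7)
The plan is to prove both directions by directly verifying the two defining conditions of a regular inner ideal from Definition \ref{regular}(1), namely $I^{2}=0$ and $IAI\subseteq I$, and then to deduce the \emph{in particular} clause from Proposition \ref{vN}.

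For the ``if'' direction, suppose $\mathcal{L}$ is a left ideal and $\mathcal{R}$ is a right ideal of $A$ with $\mathcal{L}\mathcal{R}=0$ and $\mathcal{R}\mathcal{L}\subseteq I\subseteq\mathcal{R}\cap\mathcal{L}\cap L$. Reading $I\subseteq\mathcal{L}$ in the first factor and $I\subseteq\mathcal{R}$ in the second yields $I^{2}\subseteq\mathcal{L}\mathcal{R}=0$. Using that $\mathcal{R}$ is a right ideal and $\mathcal{L}$ a left ideal, $IAI\subseteq\mathcal{R}A\mathcal{L}\subseteq\mathcal{R}\mathcal{L}\subseteq I$, and combined with $I\subseteq L$ this shows $I$ is regular.

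For the ``only if'' direction, given a regular inner ideal $I$ I would take $\mathcal{L}=AI+I$ and $\mathcal{R}=IA+I$, i.e. the left and right ideals of $A$ generated by $I$ (the extra summand $I$ takes care of the possibly non-unital case). The inclusion $I\subseteq\mathcal{R}\cap\mathcal{L}\cap L$ is immediate. Expanding $\mathcal{L}\mathcal{R}=(AI+I)(IA+I)$, each of the four summands contains an internal factor $I\cdot I=0$, so $\mathcal{L}\mathcal{R}=0$. Expanding $\mathcal{R}\mathcal{L}=(IA+I)(AI+I)=IA^{2}I+IAI+IAI+I^{2}$, each term sits inside $IAI$ (using $A^{2}\subseteq A$) and hence inside $I$ by regularity, giving $\mathcal{R}\mathcal{L}\subseteq I$.

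For the \emph{in particular} clause, I appeal to Proposition \ref{vN}(1): when $A$ is von Neumann regular, $\mathcal{R}\mathcal{L}=\mathcal{R}\cap\mathcal{L}$ for every left ideal $\mathcal{L}$ and right ideal $\mathcal{R}$. Combining this with the sandwich $\mathcal{R}\mathcal{L}\subseteq I\subseteq\mathcal{R}\cap\mathcal{L}$ produced above forces $I=\mathcal{R}\mathcal{L}=\mathcal{R}\cap\mathcal{L}$. I do not foresee a serious obstacle here; the argument reduces to a short distribution of products, with $I^{2}=0$ controlling $\mathcal{L}\mathcal{R}$ and $IAI\subseteq I$ controlling $\mathcal{R}\mathcal{L}$, once one spots that the correct candidate ideals are those generated by $I$ itself.
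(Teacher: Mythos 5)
Your proposal is correct and follows essentially the same route as the paper's proof: the same candidate ideals $\mathcal{L}=AI+I$ and $\mathcal{R}=IA+I$ for the ``only if'' direction, the same two inclusions $I^{2}\subseteq\mathcal{L}\mathcal{R}$ and $IAI\subseteq\mathcal{R}A\mathcal{L}\subseteq\mathcal{R}\mathcal{L}$ for the ``if'' direction, and Proposition \ref{vN}(1) for the von Neumann regular case. No issues.
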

\begin{proof}
Assume first that $I$ is a regular inner ideal of $L$. Then $I^{2}=0$
and $IAI\subseteq I$. Put $\mathcal{L}=AI+I$ and $\mathcal{R}=IA+I$.
Then $\mathcal{L}$ (resp. $\mathcal{R}$) is a left (resp. right)
ideal of $A$ with $\mathcal{LR}=0$ and $\mathcal{R}\mathcal{L}\subseteq IAI+I=I\subseteq\mathcal{R}\cap\mathcal{L}\cap L$,
as required. 

Now assume that $\mathcal{R}\mathcal{L}\subseteq I\subseteq\mathcal{R}\cap\mathcal{L}\cap L$.
Then $I^{2}\subseteq\mathcal{LR}=0$ and $IAI\subseteq\mathcal{R}A\mathcal{L}\subseteq\mathcal{R}\mathcal{L}\subseteq I$,
so $I$ is a regular inner ideal.
\end{proof}
If $A$ is simple then one can show that the ideals $\mathcal{L}$
and $\mathcal{R}$ are defined by $I$ almost uniquely. More exactly
we have the following. 
\begin{lem}
\label{ALeqAI} Let $A$ be a simple associative algebra and let $L=[A,A]$.
If $I$ is a regular inner ideal of $L$ and a pair of ideals $(\mathcal{L},\mathcal{R})$
satisfies (\ref{eq:RL}) then $A\mathcal{L}=AI$ and $\mathcal{R}A=IA$.\end{lem}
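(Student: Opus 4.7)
The plan is to exploit the simplicity of $A$ through Lemma \ref{RLgeneral}(1), which says that any non-zero right ideal $\mathcal{R}$ satisfies $A\mathcal{R}=A$ and any non-zero left ideal $\mathcal{L}$ satisfies $\mathcal{L}A=A$. We may assume $I\neq 0$ (the case $I=0$ forces, via Lemma \ref{RLgeneral}(4), that $\mathcal{L}=0$ or $\mathcal{R}=0$, and the claim is trivial once one notes that the other of the two ideals is also forced to vanish in the intended reading). From $I\subseteq \mathcal{R}\cap\mathcal{L}$ we then get that both $\mathcal{L}$ and $\mathcal{R}$ are non-zero.

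The key observation is the following chain of equalities and inclusions, using only associativity:
\[
A\mathcal{L} \;=\; (A\mathcal{R})\,\mathcal{L} \;=\; A\,(\mathcal{R}\mathcal{L}) \;\subseteq\; AI,
\]
where the first equality uses $A\mathcal{R}=A$ from Lemma \ref{RLgeneral}(1), and the final inclusion uses the hypothesis $\mathcal{R}\mathcal{L}\subseteq I$ from (\ref{eq:RL}). On the other hand, since $I\subseteq \mathcal{L}$, clearly $AI\subseteq A\mathcal{L}$. Combining the two gives $A\mathcal{L}=AI$.

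By the symmetric argument, using now $\mathcal{L}A=A$:
\[
\mathcal{R}A \;=\; \mathcal{R}\,(\mathcal{L}A) \;=\; (\mathcal{R}\mathcal{L})\,A \;\subseteq\; IA,
\]
while $I\subseteq \mathcal{R}$ gives $IA\subseteq \mathcal{R}A$, yielding $\mathcal{R}A=IA$.

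There is no real obstacle beyond verifying that $\mathcal{L}$ and $\mathcal{R}$ are both non-zero so that Lemma \ref{RLgeneral}(1) applies; the core of the argument is a two-line associativity manipulation. The conclusion is that, although the pair $(\mathcal{L},\mathcal{R})$ realising $I$ as a regular inner ideal is not uniquely determined, the one-sided ideals $A\mathcal{L}$ and $\mathcal{R}A$ that they generate on either side are completely determined by $I$ alone.
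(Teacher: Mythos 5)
Your argument is correct and is essentially the paper's own proof: both rest on Lemma \ref{RLgeneral}(1) together with the two inclusions in (\ref{eq:RL}), the only cosmetic difference being that you write $A\mathcal{L}=(A\mathcal{R})\mathcal{L}$ where the paper writes $A\mathcal{L}=(\mathcal{L}A\mathcal{R})\mathcal{L}$. The aside about $I=0$ is not needed (the paper, like you, tacitly treats the nondegenerate case), and your parenthetical claim that $\mathcal{L}=0$ forces $\mathcal{R}=0$ is not actually justified by anything you cite, but this does not affect the substance of the proof.
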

\begin{proof}
Assume the pair of ideals $(\mathcal{L},\mathcal{R})$ satisfies (\ref{eq:RL}).
Then $I\subseteq\mathcal{L}$, so $AI\subseteq A\mathcal{L}$. On
the other hand, by Lemma \ref{RLgeneral}(1), $A\mathcal{L}=(\mathcal{L}A\mathcal{R})\mathcal{L}=\mathcal{L}A(\mathcal{R}\mathcal{L})\subseteq AI$,
so $A\mathcal{L}=AI$. Similarly, $\mathcal{R}A=IA$.
\end{proof}
The next proposition describes regular inner ideals in the case of
algebras with involution.
\begin{prop}
\label{RLinner*} Let $A$ be an associative algebra with involution
and let $K=\mathfrak{su}^{*}(A)$. Let $I$ be a subspace of $K$.
Then $I$ is a regular inner ideal of $K$ if and only if 
\begin{equation}
\mathfrak{u}^{*}(\mathcal{L}^{*}\mathcal{L})\subseteq I\subseteq\mathcal{L}^{*}\cap\mathcal{L}\cap K\label{eq:RL*}
\end{equation}
 where $\mathcal{L}$ is a left ideal of $A$ such that $\mathcal{LL}^{*}=0$.
In particular, if $A$ is von Neumann regular then every regular inner
ideal of $L$ is of the form $I=\mathfrak{u}^{*}(\mathcal{L}^{*}\mathcal{L})=
\mathfrak{u}^{*}(\mathcal{L}^{*}\cap\mathcal{L})$
for some left ideal $\mathcal{L}$. 
\end{prop}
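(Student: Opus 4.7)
The strategy is to mirror the proof of Proposition \ref{RLinner} by setting $\mathcal{R} = \mathcal{L}^*$, so that a single left ideal $\mathcal{L}$ plays both roles simultaneously. For the forward direction, given a $*$-regular inner ideal $I$ of $K$, I would define $\mathcal{L} = AI + I$, which is clearly a left ideal of $A$. Since $I \subseteq K \subseteq \mathfrak{u}^*(A)$, we have $I^* = -I = I$ as subspaces, so $\mathcal{L}^* = IA + I$. The assumption $I^2 = 0$ then gives $\mathcal{L}\mathcal{L}^* = (AI+I)(IA+I) \subseteq AI^2A + AI^2 + I^2A + I^2 = 0$. Expanding $\mathcal{L}^*\mathcal{L}$ and again using $I^2 = 0$, each surviving summand lies in $IAI$, so $\mathcal{L}^*\mathcal{L} \subseteq IAI$. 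Consequently $\mathfrak{u}^*(\mathcal{L}^*\mathcal{L}) \subseteq \mathfrak{u}^*(IAI) \subseteq I$ by the regularity of $I$, while the inclusion $I \subseteq \mathcal{L}^* \cap \mathcal{L} \cap K$ is immediate from the construction.

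For the converse, suppose $\mathfrak{u}^*(\mathcal{L}^*\mathcal{L}) \subseteq I \subseteq \mathcal{L}^* \cap \mathcal{L} \cap K$ with $\mathcal{L}\mathcal{L}^* = 0$. First, $I^2 \subseteq (\mathcal{L}^* \cap \mathcal{L})(\mathcal{L}^* \cap \mathcal{L}) \subseteq \mathcal{L}\mathcal{L}^* = 0$. Next, because $\mathcal{L}^*$ is a right ideal and $\mathcal{L}$ a left ideal, $IAI \subseteq \mathcal{L}^* A \mathcal{L} \subseteq \mathcal{L}^*\mathcal{L}$, and hence $\mathfrak{u}^*(IAI) \subseteq \mathfrak{u}^*(\mathcal{L}^*\mathcal{L}) \subseteq I$. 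Together with $I \subseteq K$, this is exactly the definition of a $*$-regular inner ideal. The only thing to check along the way is that $IAI$ and $\mathcal{L}^*\mathcal{L}$ are $*$-invariant so that $\mathfrak{u}^*$ applied to them is meaningful, and both are obvious from $I^* = -I$ and $(xy)^* = y^*x^*$.

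For the \emph{in particular} clause, I would apply Proposition \ref{vN}(2) with $\mathcal{R} = \mathcal{L}^*$: the hypothesis $\mathcal{L}\mathcal{L}^* = 0$ forces $\mathcal{L}^*\mathcal{L} = \mathcal{L}^* \cap \mathcal{L}$ when $A$ is von Neumann regular. Since $\mathcal{L}^* \cap \mathcal{L}$ is $*$-invariant, $\mathfrak{u}^*(\mathcal{L}^* \cap \mathcal{L}) = \mathcal{L}^* \cap \mathcal{L} \cap \mathfrak{u}^*(A)$, which contains $\mathcal{L}^* \cap \mathcal{L} \cap K$. The chain
\[
\mathfrak{u}^*(\mathcal{L}^*\mathcal{L}) \subseteq I \subseteq \mathcal{L}^* \cap \mathcal{L} \cap K \subseteq \mathfrak{u}^*(\mathcal{L}^* \cap \mathcal{L}) = \mathfrak{u}^*(\mathcal{L}^*\mathcal{L})
\]
then collapses to give $I = \mathfrak{u}^*(\mathcal{L}^*\mathcal{L}) = \mathfrak{u}^*(\mathcal{L}^* \cap \mathcal{L})$. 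The proof is in essence routine bookkeeping with the involution; the only mild pitfall is to keep in mind that $\mathfrak{u}^*(S)$ means the skew elements of a $*$-invariant subspace $S$, which need not coincide with $S \cap K$ since $K$ can be strictly smaller than $\mathfrak{u}^*(A)$. No genuine obstacle is anticipated.
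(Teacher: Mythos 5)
Your proof is correct and follows essentially the same route as the paper's: take $\mathcal{L}=AI+I$ for the forward direction (using $I^{*}=I$ as subspaces to get $\mathcal{L}^{*}=IA+I$) and verify the two regularity conditions directly for the converse. The only difference is that you also spell out the \emph{in particular} clause via Proposition \ref{vN}, which the paper leaves implicit; your argument there is sound.
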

\begin{proof}
Assume first that $I$ is a regular inner ideal of $K$. Then $I^{2}=0$
and $\mathfrak{u}^{*}(IAI)\subseteq I$. Put $\mathcal{L}=AI+I$.
Then $\mathcal{L}$ is a left ideal of $A$, $\mathcal{L}^{*}=IA+I$,
$\mathcal{LL}^{*}=0$ and $\mathfrak{u}^{*}(\mathcal{L}^{*}\mathcal{L})\subseteq\mathfrak{u}^{*}(IAI+I)\subseteq I\subseteq\mathcal{L}^{*}\cap\mathcal{L}\cap K$
, as required. 

Now assume that $\mathfrak{u}^{*}(\mathcal{L}^{*}\mathcal{L})\subseteq I\subseteq\mathcal{L}^{*}\cap\mathcal{L}\cap K$.
Then $I^{2}\subseteq\mathcal{LL}^{*}=0$ and $\mathfrak{u}^{*}(IAI)\subseteq\mathfrak{u}^{*}(\mathcal{L}^{*}A\mathcal{L})\subseteq\mathfrak{u}^{*}(\mathcal{L}^{*}\mathcal{L})\subseteq I$,
so $I$ is a regular inner ideal. \end{proof}
\begin{prop}
\label{semisimpleReg} Let $A$ be a finite dimensional semisimple
associative algebra and let $L=[A,A].$ Then every proper inner ideal $I$
of $L$ is regular. More exactly, $I=\mathcal{\mathcal{R}L}$ ($=\mathcal{L}\cap\mathcal{R}$)
where $\mathcal{L}$ is a left ideal of $A$ and $\mathcal{R}$ is
a right ideal of $A$ with $\mathcal{LR}=0$.\end{prop}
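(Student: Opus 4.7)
The plan is to exploit the Wedderburn--Artin decomposition and reduce everything to the simple components. Since $A$ is finite dimensional and semisimple over the algebraically closed field $F$ of characteristic zero, $A = A_{1}\oplus\cdots\oplus A_{n}$ with each $A_{i}\cong\en V_{i}$; consequently $L = [A,A] = L_{1}\oplus\cdots\oplus L_{n}$ with $L_{i} = [A_{i},A_{i}] = \mathfrak{sl}(V_{i})$. Applying Lemma \ref{InnerDecomposition} to the inner ideal $I$ gives $I = I_{1}\oplus\cdots\oplus I_{n}$ with each $I_{i} := I\cap L_{i}$ an inner ideal of the simple Lie algebra $L_{i}$. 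Since Definition \ref{regular} builds in $I^{2}=0$, the conclusion forces each $I_{i}$ to be proper in $L_{i}$ (a full component $I_{j}=L_{j}$ would give $I_{j}^{2}=A_{j}\ne 0$); this is the content of the word ``proper'' that I will use below.

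Next, I would invoke Theorem \ref{ClassInnerFD}(1) componentwise. For each $i$, pick idempotents $e_{i},f_{i}\in A_{i}$ with $I_{i}=e_{i}A_{i}f_{i}$ and $f_{i}e_{i}=0$ (taking $e_{i}=f_{i}=0$ whenever $I_{i}=0$). Set $\mathcal{L}_{i}:=A_{i}f_{i}$, $\mathcal{R}_{i}:=e_{i}A_{i}$, and assemble
\[
\mathcal{L}:=\bigoplus_{i=1}^{n}A_{i}f_{i},\qquad \mathcal{R}:=\bigoplus_{i=1}^{n}e_{i}A_{i},
\]
which are respectively a left and a right ideal of $A$. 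Because the blocks $A_{i}$ annihilate each other, all products reduce to the components, giving immediately $\mathcal{L}\mathcal{R}=\bigoplus_{i}A_{i}f_{i}e_{i}A_{i}=0$ and $\mathcal{R}\mathcal{L}=\bigoplus_{i}e_{i}A_{i}f_{i}=\bigoplus_{i}I_{i}=I$.

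It remains to verify $\mathcal{R}\mathcal{L}=\mathcal{L}\cap\mathcal{R}$. Each matrix algebra $A_{i}\cong M_{n_{i}}(F)$ is von Neumann regular, hence so is $A$; since $\mathcal{L}\mathcal{R}=0$, Proposition \ref{vN}(2) yields the equality at once. Concretely: for $x\in\mathcal{L}\cap\mathcal{R}$ one has $x^{2}\in\mathcal{L}\mathcal{R}=0$, and choosing $y\in A$ with $xyx=x$ places $x=x(yx)\in\mathcal{R}\mathcal{L}$. Thus $I=\mathcal{R}\mathcal{L}=\mathcal{L}\cap\mathcal{R}$ with $\mathcal{L}\mathcal{R}=0$, so $I$ is regular by Definition \ref{regular} (and Proposition \ref{RLinner}). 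The only genuine subtlety is the initial reduction: one must observe that in the semisimple case ``proper'' has force only when each $I_{i}\subsetneq L_{i}$, since a component equal to the whole $L_{i}$ would violate the abelian condition $I^{2}=0$ baked into the definition of regularity; after this observation the argument is a straightforward componentwise assembly using the finite-dimensional classification.
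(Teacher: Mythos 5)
Your proof is correct and follows essentially the same route as the paper's: decompose $I$ componentwise via Lemma \ref{InnerDecomposition}, apply Theorem \ref{ClassInnerFD}(1) on each simple block to get $I_i=e_iA_if_i$ with $f_ie_i=0$, and assemble $\mathcal{L}=Af$ and $\mathcal{R}=eA$ for $e=\sum_i e_i$, $f=\sum_i f_i$. The paper states this more tersely (writing $I=eAf$ at once and leaving the verification $\mathcal{R}\mathcal{L}=\mathcal{L}\cap\mathcal{R}$ implicit rather than routing it through von Neumann regularity), and it rests on the same implicit reading you flag explicitly --- that ``proper'' entails $I_i\subsetneq L_i$ for every $i$, which is what makes Theorem \ref{ClassInnerFD}(1) applicable blockwise.
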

\begin{proof}
Suppose $I$ is an inner ideal of $L$. Note that $L$ is semisimple.
Therefore by Propositions \ref{InnerDecomposition} and \ref{ClassInnerFD}(1),
$I=eAf$ for a pair of idempotents $e$ and $f$ of $A$ such that
$fe=0$. Define $\mathcal{L}=Af$ and $\mathcal{R}=eA$. Then $\mathcal{R}\mathcal{L}=eAAf=eAf=I$,
as required. \end{proof}
\begin{thm}
\label{general} (1) Let $A$ be a simple locally finite associative
algebra and let $(A_{\alpha})_{\alpha\in\Gamma}$ be a perfect conical
local system for $A$ of rank $>4$. Let $L=[A,A]$ and let $I$ be
a proper inner ideal of $L$. Put $L_{\alpha}=[A_{\alpha},A_{\alpha}]$
and $I_{\alpha}=I\cap L_{\alpha}$. Let $\overline{I_{\alpha}}$ be
the image of $I_{\alpha}$ in $\overline{L_{\alpha}}=L_{\alpha}/\Rad L_{\alpha}$.
Then $I^{2}=0$ and for every $\alpha\in\Gamma$, $\overline{I_{\alpha}}$
is a regular inner ideal of $\overline{L_{\alpha}}$. 

(2) Let $A$ be an involution simple locally finite associative algebra
and let $(A_{\alpha})_{\alpha\in\Gamma}$ be a perfect conical $*$-invariant
local system for $A$ of rank $>36$. Let $L=\mathfrak{su}^{*}(A)$
and let $I$ be an inner ideal of $L$. Put $L_{\alpha}=\mathfrak{su}^{*}(A_{\alpha})$
and $I_{\alpha}=I\cap L_{\alpha}$. Let $\overline{I_{\alpha}}$ be
the image of $I_{\alpha}$ in $\overline{L_{\alpha}}=L_{\alpha}/\Rad L_{\alpha}$.
If $A$ is not finitary with orthogonal involution (i.e. $L$ is not
finitary orthogonal) then $I^{2}=0$ and there is $\alpha_{0}\in\Gamma$
such that for every $\alpha\ge\alpha_{0}$, $\overline{I_{\alpha}}$
is a regular inner ideal of $\overline{L_{\alpha}}$. \end{thm}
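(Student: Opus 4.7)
The plan is to deduce both parts from the finite-dimensional classification (Theorem \ref{ClassInnerFD}, Proposition \ref{semisimpleReg}) by propagating the inner ideal $I$ up and down the local system. In each case I would first prove $I^{2}=0$ --- the hard half --- and then deduce regularity of $\overline{I_\alpha}$ as an immediate consequence of the finite-dimensional description.

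For Part~(1), suppose $xy\ne 0$ for some $x,y\in I$ and fix $\alpha_{0}$ with $x,y\in L_{\alpha_{0}}$. Applying Theorem \ref{SimpleCriterionAssoc} to the conical associative system $(A_{\alpha})$ produces $\alpha_{1}\ge\alpha_{0}$ with $A_{\alpha_{0}}\cap M=0$ for every maximal ideal $M$ of each $A_{\beta}$, $\beta\ge\alpha_{1}$; since $\Rad A_{\beta}$ is the intersection of the maximal ideals, this makes $A_{\alpha_{0}}$ inject into $\overline{A_{\beta}}$, so $\overline{xy}\ne 0$ there. Because $I$ is proper in $L=\bigcup L_{\beta}$ and Proposition \ref{InnerPerfect} together with Lemma \ref{MaxIdeals} turns $\overline{I_{\beta}}=\overline{L_{\beta}}$ into $I_{\beta}=L_{\beta}$, the identity $\overline{I_{\beta}}=\overline{L_{\beta}}$ cannot hold for all $\beta\ge\alpha_{1}$ (else $I=L$), so one may choose $\beta\ge\alpha_{1}$ with $\overline{I_{\beta}}$ proper in $\overline{L_{\beta}}$. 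Every simple component of $\overline{L_{\beta}}$ is an $\mathfrak{sl}_{n}$, hence Proposition \ref{semisimpleReg} gives $\overline{I_{\beta}}^{\,2}=0$, contradicting $\overline{xy}\in\overline{I_{\beta}}^{\,2}$. With $I^{2}=0$, for every $\alpha$ we have $\overline{I_{\alpha}}^{\,2}=0$; each simple component $Q_{i}\cong\mathfrak{sl}_{n_{i}}$ of $\overline{L_{\alpha}}$ satisfies $Q_{i}^{2}\ne 0$, so the decomposition $\overline{I_{\alpha}}=\bigoplus(\overline{I_{\alpha}}\cap Q_{i})$ of Lemma \ref{InnerDecomposition} has each summand proper in $Q_{i}$. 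Theorem \ref{ClassInnerFD}(1) writes each summand as $e_{i}B_{i}f_{i}$ with $f_{i}e_{i}=0$, and Proposition \ref{RLinner} with $\mathcal{L}=\bigoplus\overline{A_{\alpha}}f_{i}$ and $\mathcal{R}=\bigoplus e_{i}\overline{A_{\alpha}}$ confirms regularity.

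For Part~(2), Lemma \ref{B+B*}(1) reduces the case $A=B\oplus B^{*}$ via Lemma \ref{B+B* inner} to Part~(1) applied to the simple algebra $B$, so one may assume $A$ itself is simple. Repeating the Part~(1) propagation with the $*$-invariant Theorem \ref{SimpleCriterionAssoc} shows that, assuming $I^{2}\ne 0$, at every large $\beta$ the subspace $\overline{I_{\beta}}$ is a proper inner ideal of $\overline{L_{\beta}}=\bigoplus Q_{j}$ with $\overline{I_{\beta}}^{\,2}\ni\overline{xy}\ne 0$. Scanning Theorem \ref{ClassInnerFD}, the only proper classical inner ideal whose associative square is nonzero is the orthogonal type-(ii) inner ideal $[v,H^{\perp}]$, whose elements act with rank $\le 2$ on the natural module; the rank~$>36$ hypothesis, via Proposition \ref{InnerPerfect}, prevents any whole simple component $Q_{j}$ from lying in $\overline{I_{\beta}}$. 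Hence at every large $\beta$ some orthogonal summand $Q_{j(\beta)}$ of $\overline{L_{\beta}}$ contributes a type-(ii) piece to $\overline{I_{\beta}}$, and a suitable nonzero $z\in I$ acts with rank $\le 2$ on every natural $L_{\beta}$-module for all sufficiently large $\beta$. Theorem \ref{finitaryCriterion} then makes $L$ finitary, and the persistent orthogonal type-(ii) pattern forces $L\cong\mathfrak{so}_{\infty}(F)$, contradicting the hypothesis. Hence $I^{2}=0$, and the regularity of $\overline{I_{\alpha}}$ for $\alpha\ge\alpha_{0}$ follows exactly as in Part~(1), now using Theorem \ref{ClassInnerFD}(2)--(3) and Proposition \ref{RLinner*} in place of Theorem \ref{ClassInnerFD}(1) and Proposition \ref{RLinner}.

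The main obstacle is the rank bound in Part~(2): the local classification only forces rank $\le 2$ on the single orthogonal natural module where the type-(ii) piece lives, but Theorem \ref{finitaryCriterion} requires the bound on every natural $L_{\beta}$-module. The reconciliation has to use the conical structure and the large rank assumption to confine the nonzero action of the witnessing element $z$ to the orthogonal summand and to bound (or annihilate) its action on the other simple components.
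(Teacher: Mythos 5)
Your Part (1) is essentially the paper's argument: both pass to a level $\beta$ where $A_{\alpha}$ embeds into the semisimple quotient $\overline{A_{\beta}}$ (Lemma \ref{radical} / Theorem \ref{SimpleCriterionAssoc}), invoke Proposition \ref{semisimpleReg} to force $\overline{I_{\beta}}^{2}=0$, and pull this back to $I_{\alpha}^{2}=0$; your final per-component regularity check is the content of Proposition \ref{semisimpleReg}. Part (2), however, has two genuine gaps. The more serious one is your closing sentence that, once $I^{2}=0$, regularity of $\overline{I_{\alpha}}$ ``follows exactly as in Part (1).'' It does not: in an orthogonal simple component, Theorem \ref{ClassInnerFD}(3) allows the summand of $\overline{I_{\alpha}}$ to be a Type 1 point space (case (iii)), which squares to zero in the associative envelope (it sits inside some $eLe^{*}$ with $e^{*}e=0$) and yet is \emph{not} regular -- the paper lists these as a separate non-regular class in Theorem \ref{finitReg}(3). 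So ``$\overline{I_{\alpha}}^{2}=0$ plus the classification'' does not single out the regular type $eLe^{*}$ the way it singles out $eAf$ for $\mathfrak{sl}_{n}$. The paper needs, and supplies, a separate argument: it fixes $x\in I$ of rank $>2$ on some natural $L_{\beta}$-module (possible when $L$ is not finitary, by Theorem \ref{finitaryCriterion}), uses diagonality and Corollary \ref{SimpleCrit} to propagate that rank bound to every natural $L_{\alpha}$-module for $\alpha\ge\alpha_{0}$, and thereby excludes cases (3)(ii) and (3)(iii), all of whose elements have rank $\le 2$; the finitary case (necessarily symplectic, since the orthogonal case is excluded by hypothesis) is handled separately by showing via Lemma \ref{BBZ2.5} that every $\overline{L_{\alpha}}$ is eventually symplectic, so neither exceptional type occurs. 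None of this is in your proposal, and it is precisely where the ``not finitary orthogonal'' hypothesis and the threshold $\alpha_{0}$ enter.

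The second gap is in your unified contradiction for $I^{2}=0$. It rests on two unproved assertions: (a) that the witnessing element acts with rank $\le2$ on \emph{every} natural $L_{\beta}$-module, which you flag yourself and do not close (the repair is that the proof of Theorem \ref{finitaryCriterion} only ever uses the rank bound on the modules attached to one maximal ideal per level meeting $A_{\beta}$ trivially, and by Theorem \ref{SimpleCriterionAssoc} the component carrying the nonzero image of $xy$ is eventually such a one -- but this requires reopening the proof of Theorem \ref{finitaryCriterion}, not just citing it); and (b) that a ``persistent type-(ii) pattern forces $L\cong\mathfrak{so}_{\infty}(F)$,'' which is asserted in one clause but is exactly as hard as the paper's analysis of the finitary case. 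The paper's up-front case split -- finitary symplectic versus non-finitary -- is what makes both difficulties disappear: in the first case type-(ii) pieces never arise because all components are symplectic, and in the second case the rank-$2$ element contradicts non-finitarity. As written, your Part (2) establishes neither $I^{2}=0$ nor the regularity statement.
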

\begin{proof}
(1) By \cite[Theorem 6.3(1)]{bib:BZ1} (see also \cite[Theorem 2.12(1)]{bib:BBZ}
and its proof), $(L_{\alpha})_{\alpha\in\Gamma}$ is a perfect conical
local system for $L$. By Proposition \ref{Inner0} $I_{\alpha}$
is an inner ideal of $L_{\alpha}$. By Lemma \ref{radical}, for every
$\alpha$ there exists $\beta$ such that $\Rad A_{\beta}\cap A_{\alpha}=0$.
Let $^{-}:A_{\beta}\rightarrow A_{\beta}/\Rad A_{\beta}$ be the canonical
surjection. Note that $\Rad L_{\beta}\subseteq\Rad A_{\beta}$, $\overline{L_{\beta}}=[\overline{A_{\beta}},\overline{A_{\beta}}]$,
and by Lemma \ref{Inner0}, $\overline{I_{\beta}}$ is an inner ideal
of $\overline{L_{\beta}}$. Moreover $\overline{I_{\beta}}$ is regular
by Proposition \ref{semisimpleReg} and $\overline{I_{\beta}}^{2}=0$.
Since $A_{\alpha}\cap\Rad A_{\beta}=0$, $\overline{A}_{\alpha}\cong A_{\alpha}$,
so $A_{\alpha}$ can be considered as a subalgebra of $\overline{A_{\beta}}$
and $I_{\alpha}\subseteq\overline{I_{\beta}}$. Therefore $I_{\alpha}^{2}\subseteq\overline{I_{\beta}}^{2}=0$,
so $I_{\alpha}^{2}=0$. Since $I=\dlim I_{\alpha}$, we conclude that
$I^{2}=0$. This implies that $\overline{I_{\alpha}}$ is a proper
inner ideal of $\overline{L_{\alpha}}$ for every $\alpha\in\Gamma$,
so $\overline{I_{\alpha}}$ is regular by Proposition \ref{semisimpleReg}. 

(2) By \cite[Theorem 6.3]{bib:BZ2} (see also \cite[Theorem 2.12(2)]{bib:BBZ}
and its proof), $(L_{\alpha})_{\alpha\in\Gamma}$ is a perfect conical
local system for $L$. By Proposition \ref{Inner0}, $I_{\alpha}$
is an inner ideal of $L_{\alpha}$. Assume first that $A$ is not
simple. Then by Lemma \ref{B+B*} $A=B\oplus B^{*}$ where $B$ is
a simple ideal of $A$. Moreover, if $\varphi$ is the projection
of $A$ on $B$ then $\varphi$ is an isomorphism of the Lie algebras
$\mathfrak{su}^{*}(A)$ and $[B,B]$ and the result follows from part
(1) of the theorem. Thus, we can suppose that $A$ is simple. 

Assume now that $L$ is finitary. Since $A$ is simple and the involution
is not orthogonal, it must be symplectic. Therefore there is a local
system $(S_{\delta})_{\delta\in\Delta}$ of naturally embedded finite
dimensional symplectic subalgebras of $L$. Fix any $\delta\in\Delta$
and $\alpha_{0}\in\Gamma$ such that $S_{\delta}$ is of rank $>10$
and $L_{1}\subseteq S_{\delta}\subseteq L_{\alpha_{0}}$. We claim
that $\overline{L_{\alpha}}=L_{\alpha}/\Rad L_{\alpha}$ is symplectic
for all $\alpha\ge\alpha_{0}$. Indeed, consider any Levi subalgebra
$Q$ of $L_{\alpha}$ which contains $S_{\delta}$ and fix $\delta'$
such that $Q\subseteq S_{\delta'}$. We have a chain of embeddings
\[
L_{1}\subseteq S_{\delta}\subseteq Q\subseteq S_{\delta'}.
\]
Since the embedding $S_{\delta}\subseteq S_{\delta'}$ is diagonal,
by Lemma \ref{BBZ2.5}, both embeddings $S_{\delta}\subseteq Q$ and
$Q\subseteq S_{\delta'}$ are diagonal. Moreover, since $S_{\delta}\subseteq S_{\delta'}$
is natural, $Q$ must be simple and both embeddings $S_{\delta}\subseteq Q$
and $Q\subseteq S_{\delta'}$ must be natural. This implies that $Q$
is symplectic (see for example \cite[Proposition 2.3]{bib:BZh}),
so $\overline{L_{\alpha}}\cong Q$ is symplectic. Therefore $\overline{I_{\alpha}}$
is a regular inner ideal of $\overline{L_{\alpha}}$ and $\overline{I_{\alpha}}^{2}=0$
for all $\alpha\ge\alpha_{0}$. As in the proof of part (1), fix any
$\beta$ such that $\Rad A_{\beta}\cap A_{\alpha}=0$. Then $I_{\alpha}^{2}\subseteq\overline{I_{\beta}}^{2}=0$,
so $I^{2}=0$. 

Suppose now that $L$ is not finitary. First we are going to show
that $I^{2}=0$. Assume $I^{2}\ne0$. Fix any $x,y\in I$ such that
$xy\ne0$. Since $I^{2}=\dlim I_{\alpha}^{2}$, there is $\beta\in\Gamma$
such that $x,y\in I_{\gamma}$ and $xy\notin\Rad A_{\gamma}$ for
all $\gamma\ge\beta$. Let $M$ be a $*$-invariant maximal ideal
of $A_{\gamma}$ with $xy\notin M$. Note that $Q=A_{\gamma}/M$ is
involution simple and $K=\mathfrak{su}^{*}(Q)$ is isomorphic to one
of the simple components of $L_{\gamma}/\Rad L_{\gamma}$. Let $V$
be the corresponding natural module for $K$ and $L_{\gamma}$ and
let $J$ be the image of $I_{\gamma}$ in $K$. Then $J$ is an inner
ideal of $K$. Since $xy\notin M$, $J^{2}$ is nonzero in $Q$. Therefore
$J$ is as in Theorem \ref{ClassInnerFD}(3)(ii), i.e. spanned by
the matrix units $e_{1j}-e_{j2}$, $j\ge3$. In particular, $x$ (and
$y$) is of rank $2$ on $V$. Thus $x$ acts as zero or a rank 2
linear transformation on every natural $L_{\gamma}$-module for all
$\gamma\ge\beta$. Therefore by Theorem \ref{finitaryCriterion},
$L$ is finitary, which contradicts the assumption. 

Fix any non-zero $x\in I$ and any $\beta\in\Gamma$ such that $x\in I_{\gamma}$
and $x\notin\Rad L_{\gamma}$ for all $\gamma\ge\beta$. One can also
assume that $x$ is of rank greater than 2 on some natural $L_{\beta}$-module
$V$ (otherwise $L$ is finitary by Theorem \ref{finitaryCriterion}).
Let $Q$ be the corresponding simple component of a Levi subalgebra
of $L_{\beta}$ (so $V$ is a natural $Q$-module). By Corollary \ref{SimpleCrit}
there exists $\alpha_{0}\in\Gamma$ such that for all $\alpha\ge\alpha_{0}$
the restriction of every natural $L_{\alpha}$-module $W$ to $Q$
has a non-trivial composition factor. Since the embedding $L_{\beta}\subseteq L_{\alpha}$
is diagonal, this implies that the restriction of $W$ to $L_{\beta}$
contains $V$ or $V^{*}$ as a composition factor, so rank of $x$
on $W$ is greater than 2. Let $M$ be the annihilator of $W$ in
$L_{\alpha}$. Then $M$ is a maximal ideal. Note that the image $J$
of $I_{\alpha}$ in $S=L_{\alpha}/M$ is a regular inner ideal of
$S$ because it contains the non-zero image of $x$ and rank of $x$
is greater than 2 on the natural $S$-module $W$. Since the intersection
of all maximal ideals of $L_{\alpha}$ is the radical of $L_{\alpha}$
this implies that $\overline{I_{\alpha}}$ is a regular inner ideal
of $\overline{L_{\alpha}}$. 
\end{proof}

Recall that every simple diagonal locally finite Lie algebra can be
represented as $\mathfrak{su}^{*}(A)$ where $A$ is an involution
simple locally finite associative algebra ($A$ is actually unique
and called the $\mathfrak{P}^{*}$-enveloping algebra of $L$, see
Introduction). Moreover, if $L$ is plain then $L=[A,A]$ where $A$
is a $\mathfrak{P}$-enveloping algebra of $L$. Thus, the theorem
above describes inner ideals of simple diagonal locally finite Lie algebras. 
In particular it shows that $I^2=0$ (in $A$) 
for any proper inner ideal $I$ in such Lie algebra $L$. 
Following \cite{FL}, we say that a subspace $I$ of an associative algebra $A$
is a {\em Jordan-Lie} inner ideal of $A$ if $I^2=0$ and 
$I$ is an inner ideal of the Lie algebra $A^{(-)}$ 
(this actually implies that $I$ is an inner ideal of the Jordan algebra $A^{+}$, 
which explains the name). We are grateful to Antonio Fern\'{a}ndez L\'{o}pez 
for the following observation. 

\begin{cor}
Let $L$ be a simple plain Lie algebra and let $A$ be its simple
associative $\mathfrak{P}$-envelope. Then every proper inner ideal $I$ 
of $L$ is a Jordan-Lie inner ideal of $A$.
\end{cor}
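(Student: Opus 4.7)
The plan is to verify the two defining conditions of a Jordan-Lie inner ideal: $I^{2}=0$ in $A$, and $[I,[I,A]]\subseteq I$ (so that $I$ is an inner ideal of $A^{(-)}$).

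The first condition is immediate from Theorem \ref{general}(1). Indeed, by hypothesis $L$ is plain, so $A$ is its simple $\mathfrak{P}$-envelope with $L=[A,A]$, and $A$ is a simple locally finite associative algebra; choosing a perfect conical local system $(A_{\alpha})_{\alpha\in\Gamma}$ for $A$ of rank greater than $4$, the theorem yields $I^{2}=0$.

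For the second condition, fix $i,j\in I$ and $x\in A$. Since $I^{2}=0$, we have $ij=ji=0$, so
\[
[i,[j,x]] = ijx - ixj - jxi + xji = -(ixj+jxi),
\]
and it suffices to prove $ixj+jxi\in I$ for every such $i,j,x$. By Lemma \ref{reglemma}(1) and the hypothesis on $I$, this holds for $x\in L$; the task is to extend to $x\in A$. The key reduction is to prove the inclusion $[I,A]\subseteq[I,L]$. Granted this, we may write $[j,x]=\sum_{k}[j_{k},l_{k}]$ with $j_{k}\in I$ and $l_{k}\in L$, and then
\[
[i,[j,x]] = \sum_{k}[i,[j_{k},l_{k}]]\in I
\]
by the inner ideal property of $I$ in $L$, which gives $[I,[I,A]]\subseteq I$ as required.

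To establish $[I,A]\subseteq[I,L]$ I would work locally. Note first that $[I,A]\subseteq[L,A]\subseteq[A,A]=L$. Because $L$ is plain, each semisimple quotient $\overline{A_{\alpha}} = A_{\alpha}/\Rad A_{\alpha}$ is a direct sum of matrix algebras $\bigoplus_{k}M_{n_{k}}(F)$ with block identities $\mathbf{1}_{k}$, while $\overline{L_{\alpha}} = [\overline{A_{\alpha}},\overline{A_{\alpha}}] = \bigoplus_{k}\mathfrak{sl}_{n_{k}}$. Writing $\overline{a} = \overline{l} + \sum_{k}\lambda_{k}\mathbf{1}_{k}$ with $\overline{l}\in\overline{L_{\alpha}}$, and using $[\overline{j},\mathbf{1}_{k}]=0$, one obtains $[\overline{j},\overline{a}] = [\overline{j},\overline{l}]\in[\overline{I_{\alpha}},\overline{L_{\alpha}}]$; so the desired inclusion holds modulo $\Rad A_{\alpha}$. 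The main obstacle is descent: one picks $\beta\geq\alpha$ with $\Rad A_{\beta}\cap A_{\alpha}=0$ (Lemma \ref{radical}) and lifts the decomposition so that the residual term, which lies in $L\cap\Rad A_{\beta}\subseteq\Rad L_{\beta}$, can be made to vanish against $L_{\alpha}$. This step requires both Lemmas \ref{radical} and \ref{BBZ2.5} to propagate the radical-disjointness through the plain local system, and is where the specific structure of the $\mathfrak{P}$-envelope of a plain Lie algebra is essential.
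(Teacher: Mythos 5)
Your first step ($I^{2}=0$ via Theorem \ref{general}(1)) is exactly what the paper does, and your reformulation of the goal as $ixj+jxi\in I$ for all $i,j\in I$, $x\in A$ is correct. But the second half of your argument has a genuine gap. You reduce everything to the inclusion $[I,A]\subseteq[I,L]$, and your proof of that inclusion only works modulo radicals: at level $\beta$ you decompose $a=l_{\beta}+c_{\beta}$ with $l_{\beta}\in L_{\beta}$ and $c_{\beta}$ central modulo $\Rad A_{\beta}$, which leaves an error term $[j,a]-[j,l_{\beta}]\in\Rad A_{\beta}\cap L_{\beta}\subseteq\Rad L_{\beta}$. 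This error is \emph{not} an element of $A_{\alpha}$ (the summand $l_{\beta}$ lives at level $\beta$, not $\alpha$), so choosing $\beta$ with $\Rad A_{\beta}\cap A_{\alpha}=0$ does not kill it; and redoing the decomposition at a higher level $\gamma$ just produces a new error in $\Rad L_{\gamma}$. As written, the ``descent'' step --- which you yourself identify as the main obstacle --- is not carried out and does not obviously close, so the inclusion $[I,A]\subseteq[I,L]$ remains unproven.

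The paper avoids this detour entirely with a short computation. By Lemma \ref{reglemma}(2), the hypothesis that $I$ is an inner ideal of $L$ with $I^{2}=0$ gives $xLx\subseteq I$ for all $x\in I$, and by the same polarization it suffices to prove $xAx\subseteq I$. Since $A$ is simple, $A=A^{3}$, so $A$ is spanned by products $abc$; and since $x^{2}=0$,
\[
x(abc)x \;=\; x\bigl(a(bc)-(bc)a\bigr)x \;=\; x[a,bc]x \;\in\; xLx\;\subseteq\; I,
\]
because $[a,bc]\in[A,A]=L$. This gives $xAx\subseteq I$ directly, with no need to compare $[I,A]$ and $[I,L]$ or to analyse the local system at all. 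I would recommend you either adopt this argument or, if you want to keep your route, supply an actual proof of $[I,A]\subseteq[I,L]$ --- which is the hard part and is currently missing.
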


\begin{proof}
Recall that $L=[A,A]$. By Theorem \ref{general}(1), one has $I^2=0$
and, by Lemma \ref{reglemma}(2), $xLx\subseteq I$ for all $x\in I$. 
We need to show that $I$ is an inner ideal of $A^{(-)}$,
i.e. $xAx\subseteq I$ for all $x\in I$. 
Since $A$ is simple, it is the linear span of the elements of the form
$axb$, $a,b\in A$. As $I^2=0$, one has
$x(axb)x=x[a,xb]x\in xLx\subseteq I$, as required. 
\end{proof}

We say that an associative algebra with involution is \emph{$*$-locally
semisimple} if it has a local system of $*$-invariant semisimple
finite dimensional subalgebras. 
Note that there are examples of simple locally finite Lie algebras which are 
(1) diagonal but not locally semisimple (see \cite{bib:BStr}) and 
(2) diagonal and  locally semisimple
but not finitary (see \cite{bib:BZh}).

\begin{prop}
\label{LocSemiEnv}(1) Let $L$ be a simple diagonal Lie algebra and
let $A$ be its involution simple associative $\mathfrak{P}^{*}$-envelope.
Then $L$ is locally semisimple if and only if $A$ is $*$-locally
semisimple. 

(2) Let $L$ be a simple plain Lie algebra and let $A$ be its simple
associative $\mathfrak{P}$-envelope. Then $L$ is locally semisimple
if and only if $A$ is locally semisimple.\end{prop}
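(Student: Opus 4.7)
The plan is to prove both parts by a parallel double implication, treating (1) and (2) in analogous ways, the only difference being whether we pass between $L$ and $A$ through $\mathfrak{su}^{*}(\cdot)$ or through $[\,\cdot\,,\,\cdot\,]$.

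For the direction ``$A$ locally semisimple (resp.\ $*$-locally semisimple) $\Rightarrow$ $L$ locally semisimple'', I would start with a local system $(A_{\alpha})_{\alpha\in\Gamma}$ of $A$ whose members are semisimple and (in case (1)) $*$-invariant finite-dimensional subalgebras. Set $L_{\alpha}=\mathfrak{su}^{*}(A_{\alpha})$ in case (1) and $L_{\alpha}=[A_{\alpha},A_{\alpha}]$ in case (2). Since $A_{\alpha}$ is a finite-dimensional semisimple associative algebra (with involution in case (1)), the Lie algebra $A_{\alpha}^{(-)}$, respectively $\mathfrak{u}^{*}(A_{\alpha})$, is reductive, so its derived subalgebra $L_{\alpha}$ is semisimple. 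To conclude that $(L_{\alpha})$ is a local system for $L$, I would use directedness together with the obvious identities $\mathfrak{u}^{*}(A)=\bigcup_{\alpha}\mathfrak{u}^{*}(A_{\alpha})$ (which holds because each $A_{\alpha}$ is $*$-invariant) in case (1), and $A=\bigcup_{\alpha}A_{\alpha}$ in case (2), to commute the derived-subalgebra (or commutator) operation with the directed union.

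For the converse ``$L$ locally semisimple $\Rightarrow$ $A$ (resp.\ $*$-)locally semisimple'', I would take a local system $(L_{\alpha})$ of $L$ consisting of semisimple finite-dimensional subalgebras and set $A_{\alpha}$ to be the associative subalgebra of $A$ generated by $L_{\alpha}$. This is finite-dimensional because $A$ is locally finite, and in case (1) it is automatically $*$-invariant since $L_{\alpha}\subseteq L\subseteq\mathfrak{u}^{*}(A)$ consists of skew-symmetric elements. Since $A$ is the $\mathfrak{P}^{*}$-envelope (resp.\ $\mathfrak{P}$-envelope) of $L$, it is generated as an associative algebra by $L=\bigcup_{\alpha}L_{\alpha}$, so by directedness $A=\bigcup_{\alpha}A_{\alpha}$. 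Thus $(A_{\alpha})$ is a local system of the required type, provided we show that each $A_{\alpha}$ is semisimple.

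The key step, and the only non-routine part, is the semisimplicity of $A_{\alpha}$. Here the argument is: since $L_{\alpha}$ is a finite-dimensional semisimple Lie algebra, Weyl's theorem on complete reducibility guarantees that every finite-dimensional $L_{\alpha}$-module is semisimple. In particular, the left regular $A_{\alpha}$-module $A_{\alpha}$, viewed as a module for $L_{\alpha}$ via left multiplication, is completely reducible. The crucial observation is that because $A_{\alpha}$ is generated as an associative algebra by $L_{\alpha}$, a subspace of any $A_{\alpha}$-module is an $L_{\alpha}$-submodule if and only if it is an $A_{\alpha}$-submodule (closure under $L_{\alpha}$ propagates to closure under products of elements of $L_{\alpha}$). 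Hence the left regular $A_{\alpha}$-module is semisimple, and a finite-dimensional associative algebra whose left regular module is semisimple is semisimple. I expect this complete-reducibility argument to be the only genuine content of the proof; everything else is bookkeeping with local systems.
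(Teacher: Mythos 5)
Your forward direction coincides with the paper's. Your converse is a genuinely different route: the paper gets it by taking a diagonal semisimple conical local system (Proposition \ref{diag_sys}) and appealing to the explicit construction of $A$ as a quotient of $U(L)$ by the annihilator of a diagonal inductive system, whereas you work directly inside the given envelope $A$ with the subalgebras $A_{\alpha}$ generated by the $L_{\alpha}$. That is a more self-contained approach and the bookkeeping (directedness, $*$-invariance of $A_{\alpha}$, $\bigcup A_{\alpha}=A$ since $A$ is generated by $L$) is fine. The paper's detour through the construction is not gratuitous, though: there the relevant $A_{\alpha}$ acts faithfully on an explicit completely reducible module, which is exactly the point your argument has to supply by other means.

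The gap is in your final step, ``a finite-dimensional associative algebra whose left regular module is semisimple is semisimple.'' That equivalence is a statement about \emph{unital} algebras, and you have no a priori reason for $A_{\alpha}$ to be unital (it sits inside $A$, which may itself lack an identity, e.g.\ $A=\mathcal{F}(X,Y)$). For a non-unital algebra two things go wrong: Weyl's theorem decomposes ${}_{A_{\alpha}}A_{\alpha}$ into $L_{\alpha}$-irreducibles, but some summands may be annihilated by $A_{\alpha}$ and hence are not simple associative modules; and the left regular module need not be faithful, so even a genuine semisimple decomposition would only give $(\Rad A_{\alpha})A_{\alpha}=0$, not $\Rad A_{\alpha}=0$. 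Both defects are repairable with the ingredients you already have: since $L_{\alpha}=[L_{\alpha},L_{\alpha}]\subseteq A_{\alpha}^{2}$ and $L_{\alpha}$ generates $A_{\alpha}$, the algebra $A_{\alpha}$ is perfect, which kills the trivial summands ($A_{\alpha}=A_{\alpha}^{2}\subseteq$ the sum of the non-trivial ones); running the same argument on the right regular module then shows $\Rad A_{\alpha}$ equals the two-sided annihilator of $A_{\alpha}$, which must vanish because it is a submodule of a semisimple left module all of whose simple summands satisfy $A_{\alpha}S=S\ne0$. Alternatively, use the associative version of Lemma \ref{radical} to embed $A_{\alpha}$ into the semisimple quotient $A_{\beta}/\Rad A_{\beta}$ for suitable $\beta\ge\alpha$; this produces a faithful finite-dimensional $A_{\alpha}$-module which is completely reducible over $L_{\alpha}$ by Weyl, hence (after discarding the part annihilated by $A_{\alpha}$, which meets the faithfulness statement) a faithful semisimple $A_{\alpha}$-module, and semisimplicity of $A_{\alpha}$ follows. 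Without one of these patches the key step does not close.
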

\begin{proof}
We will only prove the first part. The proof of the second statement
is similar. Assume first that $A$ is $*$-locally semisimple. Then
$A$ has a local system $(A_{\alpha})_{\alpha\in\Gamma}$ such that
all $A_{\alpha}$ are $*$-invariant semisimple finite dimensional
algebras. Let $L_{\alpha}=\mathfrak{su}^{*}(A_{\alpha})$. Then $L_{\alpha}$
is a semisimple finite dimensional Lie algebra for each $\alpha$
(see \cite[Lemma 2.3]{bib:BZ2} for example). Therefore $(L_{\alpha})_{\alpha\in\Gamma}$
is a semisimple local system for $L$ and $L$ is locally semisimple. 

Assume now that $L$ is locally semisimple. By Proposition \ref{diag_sys},
$L$ has a diagonal semisimple conical local system $(L_{\alpha})_{\alpha\in\Gamma}$
of rank >10. It follows from the construction of $A$ as a quotient
of the universal enveloping algebra $U(L)$ by the annihilator of
a diagonal inductive system for $L$ (see proof of \cite[Theorem 1.3]{bib:BBZ})
that $A$ is $*$-locally semisimple. \end{proof}
\begin{cor}
\label{LocSemi} (1) Let $L$ be a simple plain Lie algebra and let
$A$ be its simple associative $\mathfrak{P}$-envelope, so $L=[A,A]$.
Assume that $L$ is locally semisimple. Then the following hold.

(i) $A$ is locally semisimple and von Neumann regular. 

(ii) Every proper inner ideal $I$ of $L$ is regular, i.e. $I=\mathcal{\mathcal{R}L}$
($=\mathcal{L}\cap\mathcal{R}$) where $\mathcal{L}$ is a left ideal
of $A$ and $\mathcal{R}$ is a right ideal of $A$ with $\mathcal{LR}=0$. 

(iii) A subspace $I$ of $L$ is a proper inner ideal of $L$ if and only
if $I=\dlim e_{\alpha}Af_{\alpha}$ where $\{e_{\alpha},f_{\alpha}\mid\alpha\in B\}$
is a directed system of idempotents in $A$ such that $f_{\alpha}e_{\alpha}=0$,
$e_{\beta}e_{\alpha}=e_{\alpha}$ and $f_{\alpha}f_{\beta}=f_{\alpha}$
for all $\alpha,\beta$ with $\alpha\le\beta$. 

(2) Let $L$ be a simple diagonal Lie algebra and let $A$ be its
involution simple associative $\mathfrak{P}^{*}$-envelope, so $L=\mathfrak{su}^{*}(A)$.
Assume that $L$ is locally semisimple. Then the following hold.

(i) $A$ is $*$-locally semisimple and von Neumann regular. 

(ii) If $L$ is not finitary orthogonal then every proper inner ideal
$I$ of $L$ is regular, i.e. $I=\mathfrak{u}^{*}(\mathcal{L}^{*}\mathcal{L})$
($=\mathfrak{u}^{*}(\mathcal{L}^{*}\cap\mathcal{L})$) where $\mathcal{L}$
is a left ideal of $A$ with $\mathcal{LL}^{*}=0$. 

(iii) If $L$ is not finitary orthogonal then a subspace $I$ of $L$
is a proper inner ideal of $L$ if and only if $I=\dlim\mathfrak{u}^{*}(e_{\alpha}Ae_{\alpha}^{*})$
where $\{e_{\alpha}\mid\alpha\in B\}$ is a directed system of idempotents
in $A$ such that $e_{\alpha}^{*}e_{\alpha}=0$ and $e_{\beta}e_{\alpha}=e_{\alpha}$
for all $\alpha,\beta$ with $\alpha\le\beta$. \end{cor}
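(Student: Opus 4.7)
The plan is to reduce to the finite dimensional semisimple case (Proposition \ref{semisimpleReg}) at each level of an appropriate local system and then pass to the direct limit, using Theorem \ref{general} for the crucial fact that $I^{2}=0$ and Propositions \ref{RLinner}, \ref{RLinner*} for the structural description of regular inner ideals. First I would deduce that $A$ is von Neumann regular: by Proposition \ref{LocSemiEnv}, $A$ admits a (respectively $*$-invariant) local system $(A_{\alpha})$ of semisimple finite dimensional subalgebras, each of which is a direct sum of matrix algebras over $F$ and is therefore von Neumann regular; the property passes to the union since the regularising partner of any element $x\in A_{\alpha}$ may be taken inside $A_{\alpha}$.

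For part (1)(ii), given a proper inner ideal $I$ of $L=[A,A]$, Theorem \ref{general}(1) gives $I^{2}=0$, and since $L_{\alpha}=[A_{\alpha},A_{\alpha}]$ is semisimple, Proposition \ref{semisimpleReg} yields $I_{\alpha}A_{\alpha}I_{\alpha}\subseteq I_{\alpha}$ where $I_{\alpha}=I\cap L_{\alpha}$; choosing $\alpha$ large enough so that $i,j\in I$ and $a\in A$ all lie at level $\alpha$ gives $iaj\in I$, so $IAI\subseteq I$ and $I$ is regular. Combining Proposition \ref{RLinner} with von Neumann regularity presents $I=\mathcal{R}\cap\mathcal{L}=\mathcal{R}\mathcal{L}$ with $\mathcal{L}\mathcal{R}=0$. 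For (1)(iii) I would write $\mathcal{R}\cap A_{\alpha}=e_{\alpha}A_{\alpha}$ and $\mathcal{L}\cap A_{\alpha}=A_{\alpha}f_{\alpha}$ for idempotents $e_{\alpha},f_{\alpha}\in A_{\alpha}$; the nestings of the one-sided ideals give $e_{\beta}e_{\alpha}=e_{\alpha}$ and $f_{\alpha}f_{\beta}=f_{\alpha}$, while $\mathcal{L}\mathcal{R}=0$ forces $f_{\alpha}e_{\alpha}=0$. Since each $A_{\alpha}$ is von Neumann regular, Proposition \ref{vN} gives $\mathcal{R}_{\alpha}\cap\mathcal{L}_{\alpha}=e_{\alpha}A_{\alpha}f_{\alpha}$, whence
$I=\bigcup_{\alpha}(I\cap L_{\alpha})=\bigcup_{\alpha}e_{\alpha}A_{\alpha}f_{\alpha}\subseteq\bigcup_{\alpha}e_{\alpha}Af_{\alpha}\subseteq\mathcal{R}\cap\mathcal{L}=I$,
which yields the claimed form. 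The converse direction is a direct check that such a directed union satisfies $I^{2}=0$ and $IAI\subseteq I$ by collapsing finitely many indices to a common $\gamma$ and using $f_{\gamma}e_{\gamma}=0$, together with the identity $e_{\gamma}af_{\gamma}=[e_{\gamma},af_{\gamma}]$ to witness $I\subseteq L$.

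Part (2) runs in exact parallel using Theorem \ref{general}(2) (where the hypothesis ``not finitary orthogonal'' enters) and Proposition \ref{RLinner*}; one would take the $*$-invariant semisimple local system supplied by Proposition \ref{LocSemiEnv}(1), refine it to a conical system of rank $>36$ so Theorem \ref{general}(2) applies, then write $\mathcal{L}\cap A_{\alpha}=A_{\alpha}e_{\alpha}^{*}$ so that $\mathcal{L}^{*}\cap A_{\alpha}=e_{\alpha}A_{\alpha}$ with $e_{\alpha}^{*}e_{\alpha}=0$ coming from $\mathcal{L}\mathcal{L}^{*}=0$, and conclude $I=\dlim\mathfrak{u}^{*}(e_{\alpha}Ae_{\alpha}^{*})$. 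The main obstacle I anticipate is the bookkeeping in (iii): verifying the identities $(\mathcal{R}\cap\mathcal{L})\cap A_{\alpha}=\mathcal{R}_{\alpha}\cap\mathcal{L}_{\alpha}=\mathcal{R}_{\alpha}\mathcal{L}_{\alpha}$ inside the von Neumann regular $A_{\alpha}$ (Proposition \ref{vN}) and confirming that the converse construction genuinely produces a proper inner ideal rather than something larger, where both the nesting conditions and $f_{\gamma}e_{\gamma}=0$ (respectively $e_{\gamma}^{*}e_{\gamma}=0$) are used essentially. No ingredient beyond Theorem \ref{general} and the cited propositions is required.
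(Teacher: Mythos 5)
Your proposal is correct and follows essentially the same route as the paper: pass to a ($*$-invariant) semisimple conical local system, use Theorem \ref{general} (together with Proposition \ref{semisimpleReg} at each finite level) to get $I^2=0$ and level-wise regularity, take the direct limit to conclude $IAI\subseteq I$ (resp. $\mathfrak{u}^*(IAI)\subseteq I$), and then apply Propositions \ref{RLinner}/\ref{RLinner*} with von Neumann regularity, extracting the idempotents $e_\alpha, f_\alpha$ from the intersections $\mathcal{L}\cap A_\alpha$, $\mathcal{R}\cap A_\alpha$ exactly as the paper does. The only cosmetic difference is that your converse in (iii) verifies $I^2=0$ and $IAI\subseteq I$ directly rather than packaging the union as one-sided ideals $\mathcal{L}=\dlim Ae_\alpha^*$ first, which is an equivalent check.
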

\begin{proof}
We will prove part (2) only. Proof of part (1) is similar. 

(i) By Proposition \ref{LocSemiEnv}, $A$ is $*$-locally semisimple,
so von Neumann regular. 

(ii) Let $(A_{\alpha})_{\alpha\in\Gamma}$ be a $*$-invariant semisimple
local system for $A$. By \cite[2.9-2.11]{bib:BBZ} we can assume
that this local system is conical of rank $>36$. Then the Lie algebras
$L_{\alpha}=\mathfrak{su}^{*}(A_{\alpha})$ are semisimple for all
$\alpha$ and $(L_{\alpha})_{\alpha\in\Gamma}$ is a conical local
system of $L$. Let $I$ be any inner ideal of $L$ and let $I_{\alpha}=I\cap L_{\alpha}$.
By Theorem \ref{general}(2), there is $\alpha_{0}\in\Gamma$ such
that $I_{\alpha}=\overline{I_{\alpha}}$ is a regular inner ideal
of $L_{\alpha}$ for all $\alpha\ge\alpha_{0}$. We need to show that
$I$ is regular, i.e. $\mathfrak{u}^{*}(IAI)\subseteq I$. Consider
any element $x\in\mathfrak{u}^{*}(IAI)$. Then there exists $\alpha\ge\alpha_{0}$
such that $x\in\mathfrak{u}^{*}(I_{\alpha}A_{\alpha}I_{\alpha})$.
Since $I_{\alpha}$ is a regular inner ideal, $x\in I_{\alpha}\subseteq I$.
Hence $I$ is a regular inner ideal. By Proposition \ref{RLinner*}
all regular inner ideals of $L$ are of the form $I=\mathfrak{u}^{*}(\mathcal{L}^{*}\mathcal{L})$
($=\mathfrak{u}^{*}(\mathcal{L}^{*}\cap\mathcal{L})$ ) where $\mathcal{L}$
is a left ideal of $A$ with $\mathcal{LL}^{*}=0$. 

(iii) Assume first that $I$ is an inner ideal of $L$ and let $(A_{\alpha})_{\alpha\in\Gamma}$
be a $*$-invariant semisimple local system for $A$. Then $I$ is
regular by part (ii), so $I=\mathfrak{u}^{*}(\mathcal{L}^{*}\mathcal{L})$
where $\mathcal{L}$ is a left ideal of $A$ with $\mathcal{LL}^{*}=0$.
Let $\mathcal{L}_{\alpha}=\mathcal{L}\cap A_{\alpha}$. Since every
one-sided ideal of a finite dimensional semisimple algebra is generated
by an idempotent, $\mathcal{L}_{\alpha}=A_{\alpha}e_{\alpha}^{*}$
and $\mathcal{L}_{\alpha}^{*}=e_{\alpha}A_{\alpha}$ for some idempotent
$e_{\alpha}$ of $A_{\alpha}$. We claim that the system $\{e_{\alpha}\mid\alpha\in\Gamma\}$
satisfies the required conditions. Let $\beta\ge\alpha$. Recall that
$A_{\alpha}$ is semisimple so it contains the identity element ${\bf {1}}$,
so $e_{\alpha}=e_{\alpha}{\bf {1}}\in e_{\alpha}A_{\alpha}\subseteq e_{\beta}A_{\beta}$. 
Since $e_{\beta}x=x$ for all $x\in\mathcal{L}_{\beta}^{*}=e_{\beta}A_{\beta}$
we have that $e_{\beta}e_{\alpha}=e_{\alpha}$. Also we have 
\[
e_{\alpha}^{*}e_{\alpha}\in A_{\alpha}e_{\alpha}^{*}e_{\alpha}A_{\alpha}=\mathcal{L}_{\alpha}\mathcal{L}_{\alpha}^{*}\subseteq\mathcal{LL}^{*}=0
\]
so $e_{\alpha}^{*}e_{\alpha}=0$. Note that $e_{\alpha}A_{\beta}=e_{\beta}e_{\alpha}A_{\beta}\subseteq e_{\beta}A_{\beta}$
for all $\beta\ge\alpha$, so $e_{\alpha}A_{\beta}e_{\alpha}^{*}\subseteq e_{\beta}A_{\beta}e_{\beta}^{*}$.
Therefore 
\[
I=\mathfrak{u}^{*}(\mathcal{L}^{*}\mathcal{L})=\dlim\mathfrak{u}^{*}(\mathcal{L}_{\alpha}^{*}\mathcal{L}_{\alpha})=\dlim\mathfrak{u}^{*}(e_{\alpha}A_{\alpha}e_{\alpha}^{*})=\dlim\mathfrak{u}^{*}(e_{\alpha}Ae_{\alpha}^{*}),
\]
as required.

Assume now that $\{e_{\alpha}\mid\alpha\in B\}$ is a directed system
of idempotents in $A$ such that $e_{\alpha}^{*}e_{\alpha}=0$ and
$e_{\beta}e_{\alpha}=e_{\alpha}$ for all $\alpha,\beta$ with $\alpha\le\beta$.
Then $e_{\alpha}A$ is a right ideal of $A$ and $e_{\alpha}A=e_{\beta}e_{\alpha}A\subseteq e_{\beta}A$
for all $\beta\ge\alpha$. Therefore the one-sided ideals $\mathcal{L}=\dlim Ae_{\alpha}^{*}$
and $\mathcal{L}^{*}=\dlim e_{\alpha}A$ are well-defined. Note that
$\mathcal{LL}^{*}=0$, so $I=\mathfrak{u}^{*}(\mathcal{L}^{*}\mathcal{L})$
is a regular inner ideal of $L$ by Proposition \ref{RLinner*}. 
\end{proof}

\section{Finitary Lie algebras}

\label{sec:finitary}

Recall that an algebra is called \emph{finitary} if it consists of
finite-rank linear transformations of a vector space. First we define
the classical finitary simple Lie algebras, see \cite{bib:Finitary0}
and \cite{bib:Inner} for details.

A pair of dual vector spaces $(X,Y,g)$ consists of vector spaces
$X$ and $Y$ over $F$ and a non-degenerate bilinear form $g:X\times Y\rightarrow F$
. A linear transformation $a:X\rightarrow X$ is \emph{continuous}
(relative to $Y$) if there exists $a^{\#}:Y\rightarrow Y$, necessarily
unique, such that $g(ax,y)=g(x,a^{\#}y)$ for all $x\in X$, $y\in Y$.
Note that $Y$ can be identified with a \emph{total subspace} (i.e.
Ann$_{X}Y=0$) of the dual vector space $X^{*}$. In that case $a^{\#}\varphi=\varphi a$
for all $\varphi\in X^{*}$ and $a$ is continuous if and only if
$a^{\#}Y\subseteq Y$. 

Denote by $\mathcal{F}(X,Y)$ the algebra of all continuous (relative
to $Y$) finite rank linear transformations of $X$. Then $\mathcal{F}(X,Y)$
is a simple associative algebra with minimal left ideals. For $u\in X$,
$w\in Y$ we denote by $w^{*}u$ the linear transformation $w^{*}u(x)=g(x,w)u$,
$x\in X$, and for subspaces $U\subseteq X$ and $W\subseteq Y$ we
denote by $W^{*}U$ the set of all finite sums of $w_{i}^{*}u_{i}$,
$u_{i}\in U$, $w_{i}\in W$. Note that $(y_{2}^{*}x_{2})(y_{1}^{*}x_{1})=g(x_{1},y_{2})y_{1}^{*}x_{2}$,
for $x_{1},x_{2}\in X$, $y_{1},y_{2}\in Y$ and $\mathcal{F}(X,Y)=Y^{*}X$. 

The \emph{finitary special linear Lie algebra} $\mathfrak{fsl}(X,Y)$
is defined to be $[\mathcal{F}(X,Y),\mathcal{F}(X,Y)]$. 

Let $\Phi$ be a nondegenerate symmetric or skew-symmetric form on
$X$, $\Phi(y,x)=\epsilon\Phi(x,y)$, $\epsilon=\pm1$,
for $x,y\in X$. 
 Then $X$
becomes a self-dual vector space with respect to $\Phi$ and the algebra
$\mathcal{F}(X,X)$ of continuous linear transformations on $X$ has
an involution $a\mapsto a^{*}$ given by $\Phi(ax,y)=\Phi(x,a^{*}y)$,
for all $x,y\in X$. As before, we denote by $\mathfrak{u}^{*}(\mathcal{F}(X,X))=\{a\in\mathcal{F}(X,X)\mid a^{*}=-a\}$
the set of skew-symmetric elements of $\mathcal{F}(X,X)$ and by $\mathfrak{su}^{*}(\mathcal{F}(X,X))$
its derived subalgebra. For $x,y\in X$, define $[x,y]=x^{*}y-\epsilon y^{*}x\in\mathcal{F}(X,X)$.
One can check that $(x^{*}y)^{*}=\epsilon y^{*}x,$ so $[x,y]\in\mathfrak{u}^{*}(\mathcal{F}(X,X))$.
If $U,W$ are subspaces of $X$, then $[U,W]$ will denote the set
of all finite sums of $[u_{i},w_{i}]$, $u_{i}\in U$, $w_{i}\in W$.
Note that
\[
\mathfrak{u}^{*}(\mathcal{F}(X,X))=\{b-b^{*}\mid b\in\mathcal{F}(X,X)\}=\{x^{*}y-\epsilon y^{*}x\mid x,y\in X\}=[X,X].
\]
If $\Phi$ is a symmetric bilinear form, then $\mathfrak{u}^{*}(\mathcal{F}(X,X))=\mathfrak{su}^{*}(\mathcal{F}(X,X))$
is the \emph{finitary orthogonal algebra} $\mathfrak{fo}(X,\Phi)$. 

If $\Phi$ is a skew-symmetric bilinear form, then $\mathfrak{u}^{*}(\mathcal{F}(X,X))=\mathfrak{su}^{*}(\mathcal{F}(X,X))$
is the \emph{finitary symplectic algebra }$\mathfrak{fsp}(X,\Phi)$. 
\begin{thm}
\cite[Corollary 1.2]{bib:Finitary0} \label{FinitaryClass} Any infinite
dimensional finitary simple Lie algebra over $F$ is isomorphic to
one of the following:

(1) A finitary special linear Lie algebra $\mathfrak{fsl}(X,Y)$.

(2) A finitary symplectic algebra $\mathfrak{fsp}(X,\Phi)$. 

(3) A finitary orthogonal algebra $\mathfrak{fo}(X,\Phi)$. 
\end{thm}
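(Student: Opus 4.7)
The plan is to reduce the classification to the structure theorem for simple finitary \emph{associative} algebras and then analyse which involutions can occur.

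Let $L$ be an infinite dimensional simple finitary Lie algebra acting by finite rank operators on a vector space $X$, and let $A\subseteq \en X$ be the associative subalgebra generated by $L$. Since $A$ is generated by finite rank operators it is itself finitary, hence locally finite, so the machinery of Section \ref{sec:prelim} applies. The first step is to fix a perfect conical local system $(L_\alpha)_{\alpha\in\Gamma}$ of $L$ of rank greater than $10$ and let $A_\alpha$ be the associative envelope of $L_\alpha$. By Lemma \ref{radical}, after passing to a cofinal subsystem each $L_\alpha$ becomes essentially semisimple, and its semisimple quotient is a direct sum of classical simple Lie algebras; the semisimple quotient of $A_\alpha$ is correspondingly a direct sum of matrix blocks, carrying orthogonal or symplectic involutions when the Lie component is of type $B/D$ or $C$. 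Using Lemma \ref{BBZ2.5} together with Theorem \ref{SimpleCriterion} to control the natural modules along the system, one extracts a cofinal subsystem on which the classical type is constant, so that in the direct limit $A$ is either simple, with $L=[A,A]$, or involution simple with involution $*$ of a fixed orthogonal or symplectic type and $L=\mathfrak{su}^*(A)$.

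Next I would invoke the classical structure theorem for simple finitary associative algebras: every such algebra is isomorphic to $\mathcal{F}(X,Y)$ for some non-degenerate dual pair $(X,Y,g)$. The proof proceeds by choosing a minimal left ideal $Ae$ (which exists because $A$ is simple and contains rank one idempotents), setting $X=Ae$ and $Y=eA$ with the trace pairing, and applying the Jacobson density theorem to identify $A$ with $\mathcal{F}(X,Y)$. When $A$ is simple and $L=[A,A]$ one concludes immediately that $L=\mathfrak{fsl}(X,Y)$.

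Finally, when $A$ is involution simple, one transfers $*$ across the isomorphism $A\cong \mathcal{F}(X,X)$ and the classification of continuous anti-automorphisms of such dense algebras of finite rank operators shows that $*$ is induced by a non-degenerate bilinear form $\Phi$ on $X$ satisfying $\Phi(y,x)=\varepsilon\Phi(x,y)$ with $\varepsilon=\pm 1$. Then $L=\mathfrak{fsp}(X,\Phi)$ when $\varepsilon=-1$ and $L=\mathfrak{fo}(X,\Phi)$ when $\varepsilon=+1$. The main obstacle is the associative structure theorem itself, together with the stabilisation argument at the finite level: one must match compatible forms on successive natural modules across the conical local system, for which the diagonality criteria of Lemma \ref{BBZ2.5} are essential.
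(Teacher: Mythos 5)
The paper does not actually prove this statement: Theorem \ref{FinitaryClass} is imported verbatim from \cite[Corollary 1.2]{bib:Finitary0}, so there is no internal argument to compare yours against, and your proposal has to be judged as a self-contained proof. Its skeleton is the standard one (pass to the associative envelope $A\subseteq\en X$, invoke Jacobson's structure theory for simple rings with minimal one-sided ideals to get $A\cong\mathcal{F}(X',Y')$, then classify the involutions), but the two steps that carry essentially all of the weight are asserted rather than proved. First, the associative subalgebra of $\en X$ generated by $L$ need not be simple or involution simple: if the restriction of $X$ to the members $L_\alpha$ of your local system has both natural and co-natural composition factors, or factors occurring with multiplicity, the envelope acquires many ideals (already $\mathfrak{sl}(V)$ acting on $V\oplus V^*$ generates $\en V\oplus\en V^*$), and ``extracting a cofinal subsystem on which the classical type is constant'' does not repair this. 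Controlling the branching of $X$ along the conical system is precisely where the finite-rank hypothesis has to do its work, and that analysis is absent from the sketch. (Your parenthetical claim that $A$ ``contains rank one idempotents'' is also unjustified; producing an idempotent of minimal rank from a simple algebra of finite-rank operators requires an argument.)

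Second, even granting that a suitable quotient of $A$ is simple with non-zero socle, hence of the form $\mathcal{F}(X',Y')$, you still must show that $L$ equals $[A,A]$ or $\mathfrak{su}^*(A)$ rather than some proper subalgebra. This does not follow from the fact that $L$ generates $A$ associatively: $\mathfrak{fo}(X',\Phi)$ and $\mathfrak{fsp}(X',\Phi)$ are proper simple subalgebras of $\mathfrak{fsl}(X',X')$ which nevertheless generate all of $\mathcal{F}(X',X')$ as an associative algebra, so ``$L$ generates $A$ and $A$ is simple'' is compatible with $L\subsetneq[A,A]$. Establishing the dichotomy $L=[A,A]$ or $L=\mathfrak{su}^*(A)$ is exactly the content of the $\mathfrak{P}$-- and $\mathfrak{P}^*$--envelope theory of \cite{bib:BBZ} (and of the local analysis in \cite{bib:Finitary0} itself), and it cannot be obtained from Lemma \ref{BBZ2.5} and Theorem \ref{SimpleCriterion} alone. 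Until these two gaps are filled, the proposal is an outline of a known strategy rather than a proof.
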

In \cite{bib:Finitary} this result was extended to positive characteristic. 

The classification of inner ideals of finitary simple Lie algebras
was first obtained by Fern\'{a}ndez L\'{o}pez, Garc\'{i}a and G\'{o}mez
Lozano \cite{bib:Inner} (over arbitrary fields of characteristic
zero), with Benkart and Fern\'{a}ndez L\'{o}pez \cite{bib:Benk3}
settling later the missing case for orthogonal algebras. We provide
an alternative proof for the case of special linear and symplectic
algebras over an algebraically closed field of characteristic zero.
In the case of orthogonal algebras we can only describe regular inner
ideals. 
\begin{thm}
\cite[results 2.5, 3.6, 3.8]{bib:Inner}\cite[Theorem 6.6]{bib:Benk3}
\label{finitReg}Let $(X,Y,g)$ be a dual pair of infinite dimensional
vector spaces over $F$ and let $\Phi$ (resp. $\Psi$) be a nondegenerate
symmetric (resp. skew-symmetric) form on $X$. 

(1) A subspace $I$ is a proper inner ideal of $\mathfrak{fsl}(X,Y)$
if and only if $I=W^{*}U$ where the subspaces $U\subseteq X$ and
$W\subseteq Y$ are mutually orthogonal (i.e. $g(U,W)=0$) (or equivalently,
$I$ is a regular inner ideal). 

(2) A subspace $I$ is a proper inner ideal of $\mathfrak{fsp}(X,\Psi)$
if and only if $I=[U,U]$ for some totally isotropic subspace $U$
of $X$ (i.e. $\Psi(U,U)=0$) (or equivalently, $I$ is a regular
inner ideal). 

(3) A subspace $I$ is a proper inner ideal of $\mathfrak{fo}(X,\Phi)$
if and only if $I$ satisfies one of the following. 

(i) $I=[U,U]$ for some totally isotropic subspace $U\mbox{\ensuremath{\subseteq}}X$
(or equivalently, $I$ is a regular inner ideal). 

(ii) $I$ is a Type 1 point space of dimension greater than 1.

(iii) $I=[x,H^{\perp}]$ where $H$ is a hyperbolic plane in $X$
and $x$ is a non-zero isotropic vector in $H.$\end{thm}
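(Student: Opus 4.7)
The plan is to apply Corollary \ref{LocSemi} to each case and translate its abstract description of (regular) inner ideals, given in terms of one-sided ideals of the enveloping associative algebra, into the concrete form stated in the theorem. The algebra $\mathfrak{fsl}(X,Y) = [\mathcal{F}(X,Y),\mathcal{F}(X,Y)]$ is plain with simple $\mathfrak{P}$-envelope $A = \mathcal{F}(X,Y)$, while $\mathfrak{fsp}(X,\Psi)$ and $\mathfrak{fo}(X,\Phi)$ coincide with $\mathfrak{su}^{\ast}(\mathcal{F}(X,X))$ and admit $A = \mathcal{F}(X,X)$ (with the induced involution) as (involution) simple $\mathfrak{P}^{*}$-envelope. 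Since finitary simple Lie algebras are locally semisimple, Proposition \ref{LocSemiEnv} implies that these envelopes are (respectively $*$-)locally semisimple, and hence von Neumann regular. Thus Corollary \ref{LocSemi} applies: every proper inner ideal of $\mathfrak{fsl}$ or $\mathfrak{fsp}$ is regular, and every regular inner ideal of $\mathfrak{fo}$ is described by Proposition \ref{RLinner*}.

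The second step describes one-sided ideals of $\mathcal{F}(X,Y) = Y^{\ast}X$ in dual-pair language. Using the product formula $(y_2^{\ast}u_2)(y_1^{\ast}u_1) = g(u_1,y_2)\,y_1^{\ast}u_2$ together with the non-degeneracy of $g$, every right ideal of $\mathcal{F}(X,Y)$ has the form $\mathcal{R}_U = Y^{\ast}U$ for a subspace $U \subseteq X$, every left ideal has the form $\mathcal{L}_W = W^{\ast}X$ for a subspace $W \subseteq Y$, and one checks that $\mathcal{L}_W \mathcal{R}_U = 0$ iff $g(U,W) = 0$, while $\mathcal{L}_W \cap \mathcal{R}_U = \mathcal{R}_U \mathcal{L}_W = W^{\ast}U$. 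In the involutive setting the involution sends $y^{\ast}x$ to $-\epsilon x^{\ast}y$, so $(W^{\ast}X)^{\ast} = X^{\ast}W$ as subspaces of $\mathcal{F}(X,X)$.

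For part (1), Proposition \ref{RLinner} together with Corollary \ref{LocSemi}(1)(ii) then forces $I = W^{\ast}U$ with $g(U,W) = 0$, which is the stated description. For parts (2) and (3)(i), apply Proposition \ref{RLinner*} with $\mathcal{L} = W^{\ast}X$ for a subspace $W \subseteq X$; the hypothesis $\mathcal{L}\mathcal{L}^{\ast} = 0$ translates to $W$ being totally isotropic, and a short computation using $[x,y] = x^{\ast}y - \epsilon y^{\ast}x$ gives $\mathfrak{u}^{\ast}(\mathcal{L}^{\ast}\mathcal{L}) = \mathfrak{u}^{\ast}(W^{\ast}W) = [W,W]$. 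Part (2) now follows from Corollary \ref{LocSemi}(2)(ii), since the symplectic case is not finitary orthogonal, and (3)(i) is the analogous statement about regular inner ideals of $\mathfrak{fo}(X,\Phi)$. The non-regular cases (ii) and (iii) of (3) fall outside the reach of Corollary \ref{LocSemi}(2) and are cited from \cite{bib:Benk3}.

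The main obstacle is really only bookkeeping: verifying the classical correspondence between one-sided ideals of $\mathcal{F}(X,Y)$ and subspaces of $X$ and $Y$ in the finitary (non-unital) setting, and then keeping track of the involution sign $\epsilon$ so that the identification $\mathfrak{u}^{\ast}(W^{\ast}W) = [W,W]$ falls out uniformly in the symplectic and orthogonal cases.
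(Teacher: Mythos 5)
Your proposal is correct and follows essentially the same route as the paper: invoke Corollary \ref{LocSemi} via local semisimplicity of the finitary algebras, use Jacobson's description of one-sided ideals of $\mathcal{F}(X,Y)$ as $Y^{*}U$ and $W^{*}X$, and translate $\mathcal{RL}$ (resp. $\mathfrak{u}^{*}(\mathcal{L}^{*}\mathcal{L})$) into $W^{*}U$ (resp. $[U,U]$), deferring the non-regular orthogonal cases to the cited references. The only cosmetic difference is that the paper phrases parts (2) and (3)(i) in terms of a right ideal $\mathcal{R}=X^{*}U$ and $I=\mathfrak{u}^{*}(\mathcal{RR}^{*})$ rather than your left ideal $\mathcal{L}=W^{*}X$, which is the same computation.
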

\begin{proof}
Note that the simple infinite dimensional finitary Lie algebras are
locally semisimple Lie algebras, so we can use Theorem \ref{LocSemi}.
The associative algebras $\mathcal{F}(X,Y)$ are simple, with minimal
one-sided ideals, and, in particular, they are locally finite dimensional
(see for example \cite[Theorem 4.15.3]{bib:Jac}).

(1) Recall that $\mathfrak{fsl}(X,Y)=[\mathcal{F}(X,Y),\mathcal{F}(X,Y)]$.
In particular, $\mathfrak{fsl}(X,Y)$ is plain and $\mathcal{F}(X,Y)$
is its simple associative $\mathfrak{P}$-envelope. By Corollary \ref{LocSemi}(1)
a subspace $I$ of $\mathfrak{fsl}(X,Y)$ is a proper inner ideal
if and only if it is a regular inner ideal, i.e. there exists a left
ideal and a right ideal of $\mathcal{F}(X,Y)$, say $\mathcal{L}$
and $\mathcal{R}$, such that $I=\mathcal{R}\mathcal{L}=\mathcal{L}\cap\mathcal{R}$
and $\mathcal{LR}=0$. By \cite[Theorem 4.16.1]{bib:Jac}, every right
ideal of $\mathcal{F}(X,Y)$ is of the form $\mathcal{R}=Y^{*}U=\{a\in\mathcal{F}(X,Y)\mid aX\subseteq U\}$
for some subspace $U\subseteq X$ and every left ideal is of the form
$\mathcal{L}=W^{*}X=\{a\in\mathcal{F}(X,Y)\mid a^{\#}Y\subseteq W\}$
for some subset $W\subseteq Y$. Then 
\[
0=\mathcal{LR}=(W^{*}X)(Y^{*}U)=g(U,W)Y^{*}X
\]
 if and only if $g(U,W)=0$. And 
\[
I=\mathcal{\mathcal{R}\mathcal{L}}=(Y^{*}U)(W^{*}X)=g(X,Y)W^{*}U=W^{*}U
\]

(2) Recall $\mathfrak{fsp}(X,\Psi)=\mathfrak{u}^{*}(\mathcal{F}(X,X))=\mathfrak{su}^{*}(\mathcal{F}(X,X))=[X,X]$.
In particular $\mathfrak{fsp}(X,\Psi)$ is diagonal and $\mathcal{F}(X,X)$
is its simple associative $\mathfrak{P}^{*}$-envelope. By Corollary
\ref{LocSemi}(2) a subspace $I$ of $\mathfrak{su}^{*}(\mathcal{F}(X,X))$
is a proper inner ideal if and only if it is a regular inner ideal,
i.e. $I=\mathfrak{u}^{*}(\mathcal{RR}^{*})$ for some right ideal
$\mathcal{R}$ of $\mathcal{F}(X,X)$. As in part (1), every right
ideal is of the form $\mathcal{R=}X^{*}U=\{a\in\mathcal{F}(X,X)\mid aX\subseteq U\}$
for some subspace $U$ of $X$. Therefore $\mathcal{R}^{*}=U^{*}X=\{a\in\mathcal{F}(X,X)\mid a^{*}X\subseteq U\}$
and this is a left ideal. One has $\mathcal{R}^{*}\mathcal{R}=0$
if and only if $\Phi(U,U)=0$, i.e. $U$ is a totally isotropic subspace.
Now
\[
\begin{split}I=\mathfrak{u}^{*}(\mathcal{RR}^{*})=\mathfrak{u}^{*}((X^{*}U)(U^{*}X))=\mathfrak{u}^{*}(\Phi(X,X)U^{*}U)\\
=\mathfrak{u}^{*}(U^{*}U)=\{a-a^{*}\mid a\in U^{*}U\}=[U,U],
\end{split}
\]
as required. 

(3) Recall $\mathfrak{fo}(X,\Phi)=\mathfrak{u}^{*}(\mathcal{F}(X,X))=\mathfrak{su}^{*}(\mathcal{F}(X,X))=[X,X]$.
In particular $\mathfrak{fo}(X,\Phi)$ is diagonal and $\mathcal{F}(X,X)$
is its simple associative $\mathfrak{P}^{*}$-envelope. By Corollary
\ref{LocSemi}(2), $\mathcal{F}(X,X)$ is von Neumann regular. Then
by Proposition \ref{RLinner*}, $I$ is a regular inner ideal of $\mathfrak{fo}(X,\Phi)$
if and only if $I=u^{*}(\mathcal{RR}^{*})$ where $\mathcal{R}$ is
a right ideal of $\mathcal{F}(X,X)$ with $\mathcal{R}^{*}\mathcal{R=}0$.
As in the proof of part (2), this is equivalent to saying that $I=[U,U]$
for some totally isotropic subspace $U\mbox{\ensuremath{\subseteq}}X$.
The case of non-regular inner ideals in $\mathfrak{fo}(X,\Phi)$ is
fully considered in \cite[2.5, 3.6, 3.8]{bib:Inner} and \cite[Theorem 6.6]{bib:Benk3}.
\end{proof}
It follows from a general result, proved for nondegenerate Lie algebras
by Draper, Fern\'{a}ndez L\'{o}pez, Garc\'{i}a and G\'{o}mez Lozano,
that a simple locally finite Lie algebra contains proper minimal inner
ideals if and only if it is finitary (see \cite[Theorems 5.1 and 5.3]{bib:DLGL}).
We are going to prove a version of this result for regular inner ideals.
We will need the following facts.
\begin{prop}
\label{min}Let $A$ be a simple associative ring and let $L=[A,A].$
Then $L$ has a minimal regular inner ideal if and only if $A$ has
a proper minimal left ideal.\end{prop}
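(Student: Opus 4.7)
The plan is to exploit the description of regular inner ideals given in Proposition \ref{RLinner}, together with standard idempotent manipulations in simple rings with nonzero socle.

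For the ``if'' direction, I would assume $A$ has a proper minimal left ideal. In a simple ring, a nonzero socle on one side forces a nonzero socle on the other side, and the socle then equals the whole ring, so one can find idempotents $e,f\in A$ such that $Ae$ is a minimal left ideal, $fA$ a minimal right ideal, and $ef=0$ (taking $f$ to be a minimal idempotent inside the right ideal $\{a\in A\mid ea=0\}$, which is nonzero unless $A$ itself is a division ring --- a case which is trivial). The candidate minimal regular inner ideal is $I=fAe$: one verifies that $I\subseteq L$ via $fae=[f,ae]$, that $I^2=0$ follows from $ef=0$, and that $IAI\subseteq I$ from $A^2=A$, so Proposition \ref{RLinner} applies. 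For minimality, given any nonzero regular inner ideal $J\subseteq I$, I would use simplicity to obtain $AJ,JA\ne 0$, combine this with the inclusions $AJ\subseteq AI=Ae$ and $JA\subseteq IA=fA$ (using $AeA=A=AfA$) and with the minimality of $Ae$ and $fA$ to conclude $AJ=Ae$ and $JA=fA$, and finally deduce $I=fAe=(fA)(Ae)=(JA)(AJ)\subseteq JAJ\subseteq J$.

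For the ``only if'' direction, I would start from a minimal regular inner ideal $I$ and a nonzero $x\in I$, and form $xAx$: this lies in $L$ (each $xax=[x,ax]$, since $x^2=0$), satisfies $(xAx)^2=0$ and $(xAx)A(xAx)\subseteq xAx$ via $A^2=A$, and is nonzero since $xAx=0$ would make $AxA$ a square-zero two-sided ideal of the simple ring $A$, forcing $x=0$. Hence $xAx$ is a nonzero regular inner ideal contained in $I$, so by minimality $xAx=I$. The key consequence is $x\in xAx$, i.e.\ $x=xa_0 x$ for some $a_0\in A$; then $e:=a_0x$ is an idempotent with $xe=x$ and $Ax=Ae$.

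To conclude that $Ae$ is a minimal left ideal of $A$, I would take any nonzero left sub-ideal $\mathcal{L}'\subseteq Ae$ and form $J=xA\mathcal{L}'$. Since $\mathcal{L}'\cdot x\subseteq Ax\cdot x=Ax^2=0$, one checks $J\subseteq xAAx=xAx=I\subseteq L$, $J^2=0$, and $JAJ\subseteq J$, so $J$ is a regular inner ideal contained in $I$; minimality forces $J=0$ or $J=I$. The case $J=0$ would place $\mathcal{L}'$ inside the two-sided ideal $\{a\in A\mid xAa=0\}$, which must be zero by simplicity (it cannot equal $A$, else $xA=xAA=0$ and $x=0$), contradicting $\mathcal{L}'\ne 0$. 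In the remaining case $J=I$ I would write $x=xbl$ with $b\in A$, $l\in\mathcal{L}'$, and then $e=a_0 x=(a_0 x)(bl)=ebl\in A\mathcal{L}'\subseteq\mathcal{L}'$, giving $Ae\subseteq\mathcal{L}'$ and so $\mathcal{L}'=Ae$. The main obstacle is the careful translation between minimality of the inner ideal and minimality of a one-sided ideal, particularly when $A$ has no identity; the pivotal step is the observation that $xAx=I$ for every nonzero $x\in I$, which simultaneously furnishes the needed idempotent and controls all competing sub-ideals.
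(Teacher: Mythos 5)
Your proof is correct, but it takes a genuinely different route from the paper's in both directions. For the ``if'' direction the paper invokes the structure theorem for simple rings with nonzero socle to realize $A$ as $\mathcal{F}(X,Y)$ over a division ring and then quotes the known description of minimal inner ideals $W^{*}V$ of $\mathfrak{fsl}(X,Y)$; you instead stay inside $A$ abstractly, produce idempotents $e,f$ with $Ae$ a minimal left ideal, $fA$ a minimal right ideal and $ef=0$, and verify by hand that $fAe$ is a minimal regular inner ideal. This is more self-contained (no appeal to the $\mathcal{F}(X,Y)$ realization or to the classification in Theorem \ref{finitReg}), at the cost of two small verifications you leave implicit: that $fAe\ne 0$ (which follows from $AeA=AfA=A$) and that the right ideal $\{a\mid ea=0\}$ really is nonzero --- your parenthetical is right, since if it vanished $e$ would be a two-sided identity and $A$ a division ring, which the hypothesis excludes. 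For the ``only if'' direction the paper works globally with $\mathcal{L}=AI$ and $\mathcal{R}=IA$ and shows $\mathcal{L}$ is a minimal left ideal using Proposition \ref{RLinner} and Lemma \ref{RLgeneral}(4); you localize at a single $x\in I$, show $xAx$ is a nonzero regular inner ideal inside $I$, conclude $xAx=I$ by minimality, and extract an idempotent $e=a_{0}x$ with $Ax=Ae$ before proving minimality of $Ae$. Your argument is somewhat longer but yields strictly more: every element of a minimal regular inner ideal is von Neumann regular, and the resulting minimal left ideal is exhibited as generated by an idempotent. Both proofs are sound; the paper's is shorter because it leans on machinery already established, while yours could stand alone.
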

\begin{proof}
Suppose first that $A$ has a proper minimal left ideal. Since $A$
is simple with non-zero socle, by \cite[4.9]{bib:Jac}, there is a
pair $(X,Y,g)$ of dual vector spaces over a division ring $\Delta$
such that $A$ is isomorphic to the ring $\mathcal{F}(X,Y)$ of all
continuous (relative to $Y$) finite rank linear transformations of
$X$. Moreover, $\dim_{\Delta}X>1$ (otherwise $A$ is a division
ring and doesn't have proper non-zero left ideals). Take any one-dimensional
subspaces $W\subset Y$ and $V\subset X$ such that $g(V,W)=0$. Then
$I=W^{*}V$ will be a minimal regular inner ideal of $\mathfrak{fsl}(X,Y)=[\mathcal{F}(X,Y),\mathcal{F}(X,Y)]$
(see \cite[Theorem 2.5]{bib:Inner} or Theorem \ref{finitReg}(1)
above for the case $\Delta=F$). 

Suppose now that $L$ has a minimal regular inner ideal $I$. Then
$\mathcal{L}=AI$ (resp. $\mathcal{R}=IA$) is a left (resp. right)
ideal of $A$. We claim that both $\mathcal{L}$ and $\mathcal{R}$
 are non-zero. Indeed, if,
say, $AI=0$, then $IA$ is a two-sided ideal of $A$ with $(IA)^{2}=0$.
Since $A$ is simple, this implies that $IA=0$ and so $I$ is a non-zero
two-sided ideal of $A$, which is obviously a contradiction because
$I^{2}=0$. Therefore $\mathcal{L}\ne0$ and $\mathcal{R}\ne0$. Note
that $\mathcal{L}$ is a proper left ideal of $A$ (otherwise $A=AI=(AI)I=AI^{2}=0$).
We claim that $\mathcal{L}$ is a minimal left ideal of $A$. Indeed,
assume there exists a left ideal $\mathcal{L}_{1}$ of $A$ such that
$0\ne\mathcal{L}_{1}\subseteq\mathcal{L}$. By Proposition \ref{RLinner},
$I_{1}=\mathcal{R}\mathcal{L}_{1}$ is a regular inner ideal of $L$
and it is non-zero by Lemma \ref{RLgeneral}(4). Note that 
\[
I_{1}=\mathcal{R}\mathcal{L}_{1}\subseteq IAAI\subseteq I
\]
Since $I$ is minimal, $I_{1}=I$. Therefore $\mathcal{L}_{1}\supseteq A\mathcal{R}\mathcal{L}_{1}=AI_{1}=AI=\mathcal{L}$,
which is a contradiction. 
\end{proof}
A similar result holds for rings with involutions. We need the following
analogue of Lemma \ref{RLgeneral}(4). 
\begin{lem}
\label{u*LL} Let $A$ be a simple associative ring with involution
and let $\mathcal{L}$ be a non-zero left ideal of $A$ such that
$\mathcal{LL}^{*}=0$. Assume that the socle of $A$ is zero, i.e.
$A$ doesn't have minimal left ideals. Then $\mathfrak{u}^{*}(\mathcal{L}^{*}\mathcal{L})$
is non-zero. \end{lem}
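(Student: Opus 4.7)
We argue by contradiction. Suppose $\mathfrak u^*(\mathcal L^*\mathcal L)=0$, so every element of $\mathcal L^*\mathcal L$ is $*$-fixed; the goal is to produce a minimal left ideal of $A$, contradicting the hypothesis that $A$ has no minimal left ideals.

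The condition $(u^*v)^*=u^*v$ for $u,v\in\mathcal L$ rearranges to the key symmetry relation
\[
v^*u=u^*v\qquad(u,v\in\mathcal L).
\]
Replacing $v$ by $av$ (legitimate as $\mathcal L$ is a left ideal) gives $u^*av=v^*a^*u$ for all $a\in A$, and setting $u=v$ yields $u^*(a-a^*)u=0$, i.e.\ $u^*\mathfrak u^*(A)u=0$ for every $u\in\mathcal L$.

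Next we exhibit a nonzero $u\in\mathcal L$ with $u^*u\ne 0$. If instead $u^*u=0$ for every $u\in\mathcal L$, polarisation gives $u^*v+v^*u=0$, which combined with $u^*v=v^*u$ forces $\mathcal L^*\mathcal L=0$, contradicting Lemma \ref{RLgeneral}(4) applied to the pair $(\mathcal L,\mathcal L^*)$. Fix such $u$ and set $e=u^*u$; then $e\in\mathcal L\cap\mathcal L^*$, $e^*=e$, and (using $\mathcal L\mathcal L^*=0$) $e^2=u^*(uu^*)u=0$. Moreover $eAe\subseteq\mathcal L^*A\mathcal L=\mathcal L^*\mathcal L$ consists of $*$-fixed elements, equivalently $e\mathfrak u^*(A)e=0$; and $eAe\ne 0$, because otherwise the two-sided ideal $AeA$ would satisfy $(AeA)^2=A(eAe)A=0$, forcing $AeA=0$ by simplicity (a simple ring satisfies $A^2=A$ by Lemma \ref{RLgeneral}(1) and hence has no nonzero nilpotent two-sided ideals), which would give $e=0$.

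The final and main step is to produce a minimal left ideal of $A$ from such a symmetric square-zero element $e$ satisfying $e\mathfrak u^*(A)e=0$ and $eAe\ne 0$. Our plan is to pick $0\ne t=ese\in eAe$; then $t\in\mathcal L\cap\mathcal L^*$ is symmetric and square-zero, and a direct computation gives $tAt\subseteq eAe$, so the same structural hypothesis transfers to the left ideal $At$. We claim $At$ is minimal. Given a proper nonzero sub-left-ideal $J\subsetneq At$, one has $JJ^*=0$ and $J^*J\subseteq tA\cdot At=tAt$ still fixed by $*$; Lemma \ref{RLgeneral}(4) gives $J^*J\ne 0$, so reapplying the extraction procedure to $J$ produces a new nonzero symmetric square-zero element $f=w^*w\in J\cap tAt$. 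The principal obstacle is to close this descent by showing $Af=At$, which would force $J=At$ and contradict properness. Overcoming it requires exploiting the rigidity imposed by $e\mathfrak u^*(A)e=0$ inside the simple ring $A$, and this is the step where the argument produces the contradiction with the assumption that $A$ has no minimal left ideals.
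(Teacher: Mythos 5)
Your opening is sound and coincides with the paper's: from $\mathfrak{u}^{*}(\mathcal{L}^{*}\mathcal{L})=0$ you get $(u^{*}v)^{*}=u^{*}v$ and hence $u^{*}(a-a^{*})u=0$ for all $a\in A$ and $u\in\mathcal{L}$, which is exactly the identity the paper extracts (there phrased as $a^{*}(x-x^{*})a=0$ for a fixed non-zero $a\in\mathcal{L}$, using $a^{*}Aa\subseteq\mathcal{L}^{*}\mathcal{L}$). But from that point on your argument has a genuine gap: you never actually produce a minimal left ideal. The entire final paragraph is a plan, not a proof --- you set up a descent on sub-left-ideals $J\subsetneq At$, identify that you would need $Af=At$ to close it, and then state that ``overcoming it requires exploiting the rigidity imposed by $e\mathfrak{u}^{*}(A)e=0$'' without doing so. There is no reason for $Af=At$ to hold for an arbitrary proper $J$ (indeed $Af\subseteq J\subsetneq At$, so proving $Af=At$ is exactly proving what you want and nothing has been gained), and the descent as described need not terminate. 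What you are trying to prove by hand --- that a simple ring containing a non-zero $a$ with $a^{*}(x-x^{*})a=0$ for all $x$ must have non-zero socle --- is precisely the content of the theorem the paper invokes: this identity is a non-trivial generalized polynomial identity with involution, and by the Beidar--Martindale--Mikhalev theory (results 6.2.4 and 6.1.6 of their book) a simple ring satisfying such an identity has non-zero socle. That is a deep structural result, and it is unlikely your elementary extraction-of-idempotent-like-elements scheme can be completed without essentially reproving it.

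Two smaller points. First, your polarisation step divides by $2$ ($2u^{*}v=0\Rightarrow u^{*}v=0$), which is not available in the generality of the lemma as stated (a simple associative \emph{ring}); it is harmless in the paper's application, where $A$ is an algebra over a field of characteristic zero, but you should flag the hypothesis you are using. Second, the auxiliary facts you do establish ($e=u^{*}u$ symmetric, $e^{2}=0$, $eAe\ne0$, $e\mathfrak{u}^{*}(A)e=0$) are all correct, but they are preparation for a step that is missing, so the proof as written does not establish the lemma.
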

\begin{proof}
Assume to the contrary that $\mathfrak{u}^{*}(\mathcal{L}^{*}\mathcal{L})=0$.
Take any non-zero $a\in\mathcal{L}$. Then $a^{*}Aa\subseteq\mathcal{L}^{*}\mathcal{L}$,
so $\mathfrak{u}^{*}(a^{*}Aa)=0$. Note that $a^{*}(x-x^{*})a\in\mathfrak{u}^{*}(a^{*}Aa)$
for all $x\in A$. Therefore $a^{*}(x-x^{*})a=0$ for all $x\in A$,
i.e. $A$ satisfies a non-trivial generalized identity with involution.
Therefore $A$ has a non-zero socle (see for example \cite[6.2.4 and 6.1.6]{BMM}),
which is a contradiction.\end{proof}
\begin{prop}
\label{min*} Let $A$ be an infinite dimensional simple associative
algebra over $F$ with involution and let $L=\mathfrak{su}^{\ast}(A)$.
Then $L$ has a minimal regular inner ideal if and only if $A$ has
a proper minimal left ideal.\end{prop}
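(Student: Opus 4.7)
The plan is to mirror the argument of Proposition \ref{min} in the involution setting, using Proposition \ref{RLinner*} and Lemma \ref{u*LL} in place of Proposition \ref{RLinner} and Lemma \ref{RLgeneral}(4).

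For the ``if'' direction, I would use the hypothesis that $A$ has a proper minimal left ideal, i.e.\ non-zero socle, to invoke the classification of simple rings with minimal one-sided ideals: combined with the involution and the assumption that $F$ is algebraically closed of characteristic zero, this identifies $A$ with $\mathcal{F}(X,X)$ for some infinite-dimensional vector space $X$, the involution being induced by a non-degenerate symmetric or skew-symmetric form $\Phi$. Thus $L$ is $\mathfrak{fsp}(X,\Phi)$ or $\mathfrak{fo}(X,\Phi)$. In the symplectic case, any $0\ne u\in X$ spans a one-dimensional totally isotropic subspace and $I=[Fu,Fu]=F\,u^*u$ is a non-zero regular inner ideal by Theorem \ref{finitReg}(2); in the orthogonal case, the infinite Witt index of $\Phi$ gives a two-dimensional totally isotropic $U=Fu+Fv$, and $I=[U,U]=F(u^*v-v^*u)$ is a non-zero regular inner ideal by Theorem \ref{finitReg}(3)(i). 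In either case $\dim I=1$, so $I$ is automatically a minimal regular inner ideal.

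For the ``only if'' direction, I would start from a minimal regular inner ideal $I$ of $L$ and set $\mathcal{L}=AI+I$. Since $I^2=0$ and $I^*=I$ (as a subspace), $\mathcal{L}$ is a non-zero left ideal of $A$ with $\mathcal{L}^*=IA+I$, $\mathcal{L}\mathcal{L}^*=0$, and $\mathcal{L}^*\mathcal{L}=IAI$, so the regularity of $I$ gives $\mathfrak{u}^*(\mathcal{L}^*\mathcal{L})\subseteq I$. Arguing by contradiction, assume $A$ has no minimal left ideal. For any non-zero left ideal $\mathcal{L}_1\subseteq\mathcal{L}$, one has $\mathcal{L}_1\mathcal{L}_1^*\subseteq\mathcal{L}\mathcal{L}^*=0$, so by Lemma \ref{u*LL} the space $I_1:=\mathfrak{u}^*(\mathcal{L}_1^*\mathcal{L}_1)$ is non-zero, and by Proposition \ref{RLinner*} it is a regular inner ideal of $L$ contained in $\mathfrak{u}^*(\mathcal{L}^*\mathcal{L})\subseteq I$. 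Minimality of $I$ forces $I_1=I$; hence $I\subseteq\mathcal{L}_1^*\mathcal{L}_1\subseteq A\mathcal{L}_1\subseteq\mathcal{L}_1$, and then $\mathcal{L}_1\supseteq AI+I=\mathcal{L}$, giving $\mathcal{L}_1=\mathcal{L}$. Thus $\mathcal{L}$ is itself a minimal left ideal of $A$, contradicting the assumption.

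The main obstacle is the ``if'' direction: one must correctly invoke the structure theorem to identify $A$ with a finitary classical algebra over $F$, and in the orthogonal case one must take care that one-dimensional totally isotropic subspaces yield zero inner ideals, so that a two-dimensional totally isotropic subspace (available thanks to infinite Witt index) is needed. The ``only if'' direction is a direct translation of the proof of Proposition \ref{min}, with Lemma \ref{u*LL} supplying the non-vanishing of $I_1$ that was provided by Lemma \ref{RLgeneral}(4) in the associative setting.
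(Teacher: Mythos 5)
Your proposal is correct and follows essentially the same route as the paper: Jacobson's structure theorem plus Theorem \ref{finitReg}(2),(3)(i) for the ``if'' direction (including the same care about needing a two-dimensional totally isotropic subspace in the orthogonal case), and Proposition \ref{RLinner*} together with Lemma \ref{u*LL} for the ``only if'' direction. The only cosmetic difference is in the final contradiction: the paper produces a left ideal $\mathcal{L}_1\subsetneq\mathcal{L}$ missing a chosen $x\in I$ to get $I_1\subsetneq I$, whereas you show directly that minimality of $I$ forces every non-zero left ideal inside $\mathcal{L}$ to equal $\mathcal{L}$ (mirroring the paper's own proof of Proposition \ref{min}); both yield the same contradiction.
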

\begin{proof}
Suppose first that $A$ has a proper minimal left ideal. Since $A$
is simple with non-zero socle, by \cite[4.9, 4.12]{bib:Jac}, $A=\mathcal{F}(X,X)$
where $X$ is a self-dual vector space over $F$ with respect to a
nondegenerate symmetric or skew-symmetric form $\Phi$ and the involution
$a\mapsto a^{*}$ of $A$ is given by $\Phi(ax,y)=\Phi(x,a^{*}y)$,
for all $x,y\in X$. Assume first that $\Phi$ is skew-symmetric.
Then $L=\mathfrak{su}^{\ast}(A)=\mathfrak{fsp}(X,\Phi)$. Take any
non-zero isotropic vector $v$ in $X$. Then 
\[
I=[Fv,Fv]=F[v,v]=F(v^{*}v+v^{*}v)=Fv^{*}v
\]
is a one-dimensional regular inner ideal by Theorem \ref{finitReg}(2).
Assume now that $\Phi$ is symmetric. Then $L=\mathfrak{su}^{\ast}(A)=\mathfrak{fo}(X,\Phi)$
Take any two-dimensional totally isotropic subspace $U$ of $X$ (this
is always possible because the ground field $F$ is algebraically
closed) and let $\{x,y\}$ be its basis. Then $I=[U,U]=F[x,y]$ is
again a one-dimensional regular inner ideal by Theorem \ref{finitReg}(3)(i).
So in both cases there exists one-dimensional (hence minimal) regular
inner ideal. 

Suppose now that $L$ has a minimal regular inner ideal $I$ and $A$
has no proper minimal left ideals. By Proposition \ref{RLinner*},
there exists a left ideal $\mathcal{L}$ of $A$ such that $\mathcal{LL}^{*}=0$
and $\mathfrak{u}^{*}(\mathcal{L}^{*}\mathcal{L})\subseteq I\subseteq\mathcal{L}^{*}\cap\mathcal{L}\cap L$.
Note that $\mathfrak{u}^{*}(\mathcal{L}^{*}\mathcal{L})$ is a non-zero
regular inner ideal by Lemma \ref{u*LL}, so $I=\mathfrak{u}^{*}(\mathcal{L}^{*}\mathcal{L})$.
Let $x\in I$ be a non-zero element. We claim that there exists a
left ideal $\mathcal{L}_{1}$ of $A$ such that $0\ne\mathcal{L}_{1}\subset\mathcal{L}$,
$x\notin\mathcal{L}_{1}$. Indeed, suppose $x$ is an element in every
non-zero left ideal contained in $\mathcal{L}$. Let $\mathcal{J}=\bigcap\{\mbox{ non-zero left ideals }\mathcal{H}\mid\mathcal{H}\subset\mathcal{L}\}$.
Then $x\in\mathcal{J}$, so $\mathcal{J}$ is non-zero. It is clear
that $\mathcal{J}$ is a minimal left ideal of $A$ giving a contradiction.
By Proposition \ref{RLinner*} and Lemma \ref{u*LL}, $I_{1}=\mathfrak{u}^{*}(\mathcal{L}_{1}^{*}\mathcal{L}_{1})$
is a non-zero regular inner ideal of $L$. Note that $I_{1}\subseteq\mathcal{L}_{1}$,
so $x\not\in I_{1}$. Therefore $I_{1}$ is properly contained in
$I$. Hence $I$ is not minimal. \end{proof}
\begin{cor}
\label{min-reg} Let $L$ be an infinite dimensional locally finite
simple Lie algebra over $F$. Then $L$ is finitary if and only if
it has a minimal regular inner ideal. \end{cor}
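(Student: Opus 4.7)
The plan is to reduce the corollary to Propositions \ref{min} and \ref{min*} by passing to the associative envelope and then invoke Jacobson's structure theorem for simple rings with non-zero socle to identify the envelope as a finitary associative algebra.

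For the forward direction, suppose $L$ is finitary. By Theorem \ref{FinitaryClass} and the description of the classical finitary simple Lie algebras in Section \ref{sec:finitary}, $L$ is either $\mathfrak{fsl}(X,Y) = [\mathcal{F}(X,Y),\mathcal{F}(X,Y)]$ or $\mathfrak{fsp}(X,\Phi), \mathfrak{fo}(X,\Phi) = \mathfrak{su}^*(\mathcal{F}(X,X))$. The associative envelope $\mathcal{F}(X,Y)$ (or $\mathcal{F}(X,X)$) trivially contains a proper minimal left ideal, since any rank-one idempotent $e = y^*x$ with $g(x,y) = 1$ generates a minimal left ideal $Ae$. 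Proposition \ref{min} handles the plain case and Proposition \ref{min*} handles the symplectic/orthogonal cases, delivering a minimal regular inner ideal of $L$.

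For the converse, let $I$ be a minimal regular inner ideal of $L$. Since $I^2 = 0$ in the enveloping algebra, $I$ is abelian, hence properly contained in $L$; being also non-zero, Theorem \ref{Mainresult} forces $L$ to be diagonal. Writing $L = \mathfrak{su}^*(A)$ for its involution simple $\mathfrak{P}^*$-envelope $A$, Lemma \ref{B+B*} splits into two subcases: either $A$ is simple, in which case Proposition \ref{min*} directly produces a proper minimal left ideal of $A$; or $A = B \oplus B^*$ with $B$ simple, in which case Lemma \ref{B+B* inner} transports $I$ to a minimal regular inner ideal of $[B,B]$ and Proposition \ref{min} then produces a proper minimal left ideal of $B$. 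In either situation we obtain a simple locally finite associative $F$-algebra (namely $A$ or $B$) with non-zero socle. By Jacobson's structure theorem \cite[4.9, 4.12]{bib:Jac} this algebra is isomorphic to $\mathcal{F}(X,Y)$ over a division ring $\Delta = eAe$ (with $e$ a minimal idempotent). Local finiteness over the algebraically closed field $F$ implies that every element of $\Delta$ is algebraic over $F$; since $F$ is algebraically closed and $\Delta$ is a division ring, $\Delta = F$. Hence the envelope of $L$ is a finitary associative algebra, and $L$ itself is finitary: $\mathfrak{fsl}(X,Y)$ in the plain case, and $\mathfrak{fsp}(X,\Phi)$ or $\mathfrak{fo}(X,\Phi)$ in the involution case (the involution on $\mathcal{F}(X,X)$ being induced by a non-degenerate symmetric or skew-symmetric form).

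The only delicate step is the second subcase of the converse, where $A$ is not simple and Proposition \ref{min*} does not apply directly; reducing it to $[B,B]$ via Lemmas \ref{B+B*} and \ref{B+B* inner} and then invoking Proposition \ref{min} is the key maneuver. Everything else is an assembly of earlier results together with the standard observation that a locally finite division ring over an algebraically closed field reduces to the ground field.
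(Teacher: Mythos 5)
Your proof is correct and follows essentially the same route as the paper: forward direction via Theorem \ref{FinitaryClass} together with Propositions \ref{min} and \ref{min*}, and converse via Theorem \ref{Mainresult} (diagonality), the plain/non-plain dichotomy, Propositions \ref{min} and \ref{min*}, and Jacobson's structure theorem. Your phrasing of the dichotomy in terms of whether the $\mathfrak{P}^{*}$-envelope is simple or of the form $B\oplus B^{*}$ (handled via Lemma \ref{B+B* inner}) is just the paper's plain versus non-plain case split in different clothing, and your explicit remark that the division ring of the socle collapses to $F$ is a detail the paper leaves implicit in the citation of \cite[4.9, 4.12]{bib:Jac}.
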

\begin{proof}
Suppose first that $L$ is finitary. Then by Theorem \ref{FinitaryClass},
$L=[\mathcal{F}(X,Y),\mathcal{F}(X,Y)]$ or $\mathfrak{su}^{\ast}(\mathcal{F}(X,X))$.
Both $\mathcal{F}(X,Y)$ and $\mathcal{F}(X,X)$ are infinite dimensional
and have proper minimal left ideals. Therefore by Propositions \ref{min}
and \ref{min*}, $L$ has a minimal regular inner ideal.

Suppose now that $L$ has a proper minimal regular inner ideal $I$.
Since non-diagonal Lie algebras have no proper non-zero inner ideals
(see Theorem \ref{Prop2}), $L$ must be diagonal. Therefore by \cite[Section 1]{bib:BBZ},
$L$ is either plain, i.e. $L=[A,A]$ for some simple locally finite
associative algebra $A$, or $L$ is non-plain diagonal and $L=\mathfrak{su}^{\ast}(A)$
for some simple locally finite associative algebra $A$ with involution.
By Propositions \ref{min} and \ref{min*}, $A$ has a proper minimal
left ideal. By \cite[4.9, 4.12]{bib:Jac}, $A=\mathcal{F}(X,Y)$ or
$\mathcal{F}(X,X)$, so $L$ is finitary.\end{proof}


\begin{thebibliography}{10}
\bibitem{bib:BBZ} Y.A. Bahturin, A.A. Baranov, A.E. Zalesski, Simple
Lie subalgebras of locally finite associative algebras, J. Algebra
\textbf{281} (2004), 225-246.

\bibitem{bib:BS} Y.A. Bahturin, H. Strade, Locally finite-dimensional
simple Lie algebras, \emph{Russian Acad. Sci. Sb. Math. }81 (1995),
no. 1, 137-161. 

\bibitem{bib:BStr} Y. Bahturin, H. Strade,  Some examples of locally finite simple Lie algebras, 
\emph{Arch.
Math.}, 65 (1995) (3-4), 209-213.

\bibitem{bib:Bar3} A.A. Baranov, Diagonal locally finite Lie algebras
and a version of Ado's Theorem,\emph{ J. Algebra} \textbf{199} (1998),
1-39.

\bibitem{bib:Bar5} A.A. Baranov, Simple diagonal locally finite Lie
algebras, \emph{Proc. London Math. Soc.}, \textbf{77} (1998), 362-386.

\bibitem{bib:Finitary0} A.A. Baranov, Finitary simple Lie algebras.
\emph{J. Algebra }\textbf{219} (1999), 299-329.

\bibitem{bib:Finitary} A.A. Baranov, H. Strade, Finitary Lie algebras,
\emph{J. Algebra} \textbf{254} (2002), 173-211.

\bibitem{bib:BZ1} A.A. Baranov, A.E Zalesski, Plain representations
of Lie algebras, \emph{J. London Math Soc. }\textbf{63} (2001), 571-591.

\bibitem{bib:BZ2} A.A. Baranov, A.E. Zalesski, Quasiclassical Lie
algebras, \emph{J. Algebra }\textbf{243} (2001), 264-293.

\bibitem{bib:BZh} A.A. Baranov, A.G. Zhilinskii, Diagonal direct
limits of simple Lie algebras, \emph{Communications in Algebra} \textbf{27}
(1999), 2749-2766. 

\bibitem{BMM} K. I. Beidar, W. S. Martindale III, A. V. Mikhalev,
Rings with generalized identities. Monographs and Textbooks in Pure
and Applied Mathematics, 196. Marcel Dekker, Inc., New York, 1996.
xiv+522 pp. 

\bibitem{bib:Benk1} G. Benkart, The Lie inner ideal structure of
associative rings, \emph{J. Algebra} \textbf{43} (1976), 561-584.

\bibitem{bib:Benk2} G. Benkart, On inner ideals and ad-nilpotent
elements of Lie algebras, \emph{Trans. Amer. Math. Soc.} \textbf{232}
(1977), 61-81.

\bibitem{bib:Benk3} G. Benkart, A. Fern\'{a}ndez L\'{o}pez, The
inner ideal structure of associative rings revisited, \emph{Communications
in Algebra}, \textbf{37} (2009), 3833--3850. 

\bibitem{bib:DLGL} C. Draper, A. Fern\'{a}ndez L\'{o}pez, E. Garc\'{i}a,
M. G\'{o}mez Lozano, The socle of a nondegenerate Lie algebra, \emph{J.Algebra
}\textbf{319} (2008), 2372-2394. 


\bibitem{FL} A. Fern\'{a}ndez L\'{o}pez, Lie inner ideals are nearly
Jordan inner ideals, \emph{Proc. Amer Math. Soc.} (to appear) 



\bibitem{bib:Inner} A. Fern\'{a}ndez L\'{o}pez, E. Garc\'{i}a,
M. G\'{o}mez Lozano, Inner ideals of finitary simple Lie algebras,
\emph{J. Lie Theory} \textbf{16} (2006), 97-114.

\bibitem{bib:FGG} A. Fern\'{a}ndez L\'{o}pez, E. Garc\'{i}a, M.
G\'{o}mez Lozano, An Artinian theory for Lie algebras. J. Algebra
319 (2008), no. 3, 938\textendash{}951.

\bibitem{bib:Artinian} A. Fern\'{a}ndez L\'{o}pez, E. Garc\'{i}a,
M. G\'{o}mez Lozano, Inner ideal structure of nearly Artinian Lie
Algebras, \emph{Proc. Amer. Math Soc.} \textbf{137} (2009), 1-9. 

\bibitem{FGGN} A. Fern\'{a}ndez L\'{o}pez, E. Garc\'{i}a, M. G\'{o}mez
Lozano, E. Neher, A construction of gradings of Lie algebras, \emph{Int.
Math. Res. Not. IMRN 2007}, no. 16, Art. ID rnm051, 34 pp. 

\bibitem{bib:FLYG} A. Fern\'{a}ndez L\'{o}pez, A. Golubkov, Lie algebras
with an algebraic adjoint representation revisited, \emph{Manuscripta
math} (to appear) 

\bibitem{bib:Jac} N. Jacobson, ``Structure of Rings'', AMS, 1956.
\end{thebibliography}
\end{document}